\documentclass[reqno,12pt]{amsart}
\usepackage{amsmath,amssymb,color}
\usepackage{amsthm}
\usepackage{comment}

\newtheorem{theorem}{Theorem}

\newtheorem{proposition}{Proposition}
\newtheorem{remark}{Remark}
\newtheorem{lemma}{Lemma}
\newtheorem{example}{Example}

\def\re{\mathbb{R}}

\def\({\left(}
\def\){\right)}

\def\|{\Vert}
\def\weakto{\rightharpoonup}
\def\bo{\boldsymbol}

\begin{document}
\title[]{Asymptotic behavior of least energy solutions to the Lane-Emden problem on metric graphs}

\author{Kazuki Sato}

\address{
Department of Mathematics, Osaka Metropolitan University \\
3-3-138, Sumiyoshi-ku, Sugimoto-cho, Osaka, Japan \\
}

\email{sf22817a@st.omu.ac.jp}

\begin{abstract}
In this paper, we study the asymptotic behavior of least energy solutions to Lane--Emden problems on compact metric graphs as $p \to \infty$. For the problem with Dirichlet--Kirchhoff boundary conditions, we characterize the limiting variational problem in terms of the Dirichlet--Kirchhoff Green function and show that least energy positive solutions converge, up to a subsequence, to a normalized Green function centered at a maximizer of the diagonal Green function. We also show that their maximum points approach the set of such maximizers. For the problem with Kirchhoff--Neumann boundary conditions, we characterize the limiting variational problem on compact metric graphs without cycles and show that least energy solutions converge, up to a subsequence, to a limiting profile associated with a pair of points realizing the maximal distance on the graph. Moreover, the distance between their maximum and minimum points converges to the maximal distance on the graph.
\end{abstract}

\subjclass[2020]{Primary 35J61; Secondary 35B40.}

\keywords{Elliptic equations, Lane--Emden problem, least energy solution, asymptotic behavior, Green's function.}
\date{\today}

\maketitle

\section{Introduction}
In this paper, we study the asymptotic behavior, as $p \to \infty$, of least energy solutions to the following  Lane--Emden problem on a compact metric graph  $\bo{G}$:
\begin{equation}
\label{metric graph}
-\Delta u = u^p\quad \mbox{in}\quad \bo{G},
\end{equation}
where $p > 1$ and we assume the Dirichlet boundary condition at the ends of $\bo{G}$.

The large exponent limit for positive solutions to the Lane--Emden problem has been extensively studied in Euclidean domains. In particular, in the two-dimensional case, Ren--Wei \cite{R-W1994, R-W1996} investigated the concentration phenomenon of variational solutions as $p\to\infty$. They showed that least energy solutions exhibit single point concentration and characterized the location of the concentration point. Moreover, their analysis reveals an important role of the Green function and the associated Robin function in the characterization of the peak location. 

On the other hand, the asymptotic behavior of the Lane--Emden problem has also been studied in domains with a one-dimensional structure. Recently, De Marchis--Mazzuoli--Pacella \cite{M-M-P} considered positive one-dimensional solutions in cylinders and investigated their asymptotic behavior both as $p\to 1$ and as $p\to\infty$.

For nonlinear elliptic problems on metric graphs, Kurata--Shibata \cite{Kurata-Shibata} studied least energy positive solutions to
\[
-\varepsilon^2\Delta u+u=u^p
\]
on compact metric graphs as $\varepsilon\to 0$. They showed that, for sufficiently small $\varepsilon$, a least energy solution has a unique local maximum point, around which the solution concentrates, and investigated the location of this maximum point in relation to the edge lengths of the graph. In contrast to their small parameter limit, in the present paper we consider the large exponent limit $p\to\infty$ for the Lane--Emden problem on a fixed compact metric graph.

The aim of this paper is to describe the asymptotic behavior of least energy solutions to the Lane--Emden problem on compact metric graphs with Dirichlet--Kirchhoff conditions as $p\to\infty$, as well as the asymptotic location of their maximum points. To this end, we study the limiting variational problem associated with the optimal Sobolev constants and characterize its minimizers in terms of the Dirichlet--Kirchhoff Green function. We show that the limiting profile of least energy solutions is given by a normalized Green function whose pole maximizes the diagonal function
\[
a \longmapsto G(a,a).
\]
We also show that the maximizers of $G(a,a)$ cannot be vertices of the graph and that the distance between the maximum points of least energy solutions and the set of these maximizers tends to zero as $p\to\infty$.

To state our setting and main results, we first introduce some notation from graph theory.
\begin{itemize}
\item $\bo{G} = \bo{G}(\bo{V}, \bo{E})$ is a graph, where $\bo{V}$ is a set of vertices and $\bo{E}$ is a set of edges. We always assume that $\bo{G}$ is connected and the number of edges $\# \bo{E}$ is finite.
\item $\bo{G}$ is a metric graph if each edge $\bo{e} \in \bo{E}$ is isometric to an interval $[0, l(\bo{e})]$ (or $[0, l(\bo{e}))$~if~$l(\bo{e}) = \infty$), where $l(\bo{e}) \in (0, \infty]$ denotes the length of $\bo{e}$. We identify the edge $\bo{e}$ with $[0, l(\bo{e})]$.
\item A metric graph $\bo{G}$ is said to be compact if $l(\bo{e}) < \infty$ for each $\bo{e} \in \bo{E}$.
\item $\bo{G}$ contains no loops.
\item $\bo{e} \succ \bo{v}$ means that $\bo{e}$ is incident to $\bo{v} \in \bo{V}$.
\item $\deg\bo{v}$ denotes the number of edges that are incident to $\bo{v}$. We assume that $\deg\bo{v}$ $\ne 2$ for any $\bo{v} \in \bo{V}$.
\item $\bo{V}_{int}$ is the set of all vertices with $\deg\bo{v}$ $\geq 3$.
\item $\bo{V}_{end}$ is the set of all vertices with $\deg\bo{v}$ $= 1$. Hence, $\bo{V}_{int}\sqcup \bo{V}_{end} = \bo{V}$.
\item A \textit{cycle} is a closed path in $\bo{G}$ consisting of distinct edges and vertices, except for its initial and terminal vertex, which coincide.
\end{itemize}
\vskip\baselineskip
To study problem \eqref{metric graph}, we employ a variational framework. Let $H^1(\bo{G})$ denote the space of all continuous functions $u$ on $\bo{G}$ such that $u^{(\bo{e})} \in H^1(\bo{e})$ for each edge $\bo{e} \in \bo E$, where $u^{(\bo{e})}$ is the restriction of $u$ to~$\bo{e}$. 
It is straightforward to verify that $H^1(\bo{G})$ is a Hilbert space endowed with norm
\[
\|u\|^2_{H^1(\bo{G})} := \int_{\bo{G}} \left( |\nabla u|^2 + u^2\right)dx := \sum\limits_{\bo{e}\in \bo{E}} \int_0^{l(\bo{e})} \left( |\nabla u^{(\bo{e})}|^2 + (u^{(\bo{e})})^2\right) dx,
\]
where $\nabla = \frac{d}{dx}$. The $L^r(\bo G)$-norm is defined analogously for $r\in[1,\infty]$, and
\[
H_0^1(\bo{G}) := \left\{u \in H^1(\bo{G}) \middle| u(\bo{v}) = 0\ \mbox{for each}\ \bo{v} \in \bo{V}_{end} \right\}.
\]
For each $y \in \bo{G}\setminus \bo{V}_{end}$, we denote by $G(\cdot,y)$ the Dirichlet--Kirchhoff Green function associated with the Laplacian on $\bo{G}$. It is defined as the unique function $G(\cdot,y) \in H_0^1(\bo{G})$ such that
\begin{equation}
\int_{\bo{G}} \nabla_x G(x,y) \nabla \varphi(x) dx = \varphi(y),\qquad \text{for all~~} \varphi \in H_0^1(\bo{G}).
\label{eqn;GF}
\end{equation}
Equivalently, $G$ satisfies
\[
-\Delta_x G(x,y) = \delta_y \quad \mbox{on}\quad \bo{G},
\]
together with the Dirichlet conditions at the boundary vertices and the Kirchhoff conditions at the interior vertices. The Green function is symmetric, that is,
\[
G(x,y)=G(y,x)
\]
for every $x,y\in \bo{G}\setminus\bo{V}_{end}$. Indeed, taking $\varphi=G(\cdot,x)$ and $\varphi=G(\cdot,y)$
in \eqref{eqn;GF}, respectively, we obtain
\[
G(x,y) = \int_{\bo{G}}\nabla_z G(z,x) \nabla_z G(z,y)dz = G(y,x).
\]
We further define the set of maximizers of the diagonal Green function by
\[
M := \left\{ a \in \bo{G} ~\middle|~ G(a, a) = \max_{x \in \bo{G}}G(x,x) \right\}.
\]
Let $J_p$ be a functional on $H^1(\bo{G})$ such that
\[
J_p(u) := \frac{1}{2}\int_{\bo{G}} |\nabla u|^2 dx - \frac{1}{p+1}\int_{\bo{G}} |u|^{p+1} dx.
\]
Then, $J_p \in C^1(H^1(\bo{G}), \re)$. The critical points of the restriction of $J_p$ to $H_0^1(\bo{G})$ satisfy \eqref{E-L} as the Euler--Lagrange equation:
\begin{align}
\label{E-L}
\begin{cases}
-\Delta u_p^{(\bo{e})} = (u_p^{(\bo{e})})^p &\mbox{for each edge}\ \bo{e} \in \bo{E},\\
u_p^{(\bo{e})} > 0 &\mbox{for each edge}\ \bo{e} \in \bo{E},\\
\sum\limits_{\bo{e} \succ \bo{v}}\partial u_p^{(\bo{e})}(\bo{v}) = 0 &\mbox{for each vertex}\ \bo{v} \in \bo{V}_{int},\\
u^{(\bo{e})}(\bo{v}) = 0 &\mbox{for each vertex}\ \bo{v} \in \bo{V}_{end},\\
u_p^{(\bo{e})}(\bo{v}) = u_p^{(\bo{e}')}(\bo{v}) &\mbox{if}\ \bo{e} \succ \bo{v}\ \mbox{and}\ \bo{e}' \succ \bo{v},
\end{cases}
\end{align}
where $\Delta = \frac{d^2}{dx^2}$, and $\partial u_p^{(\bo{e})}(\bo{v})$ is the outward derivative of $u_p^{(\bo{e})}$ at $\bo{v}$. In \eqref{E-L}, the third line represents the Kirchhoff condition, the fourth line the Dirichlet boundary condition, and the last line the continuity condition at $\bo v$. Put
\[
\sigma_p := \inf_{u \in H_0^1(\bo{G})\setminus \{0\}} \sup_{t>0} J_p(tu).
\]
Then, for each $p > 1$, there exists a positive solution $u_p$ such that $J_p(u_p) = \sigma_p$. 
We call $u_p$ a least energy solution. Our results describe the asymptotic profile of the least energy solution as $p \to \infty$.
\begin{theorem}
\label{main}
Assume that $\bo{G}$ is a compact metric graph with $\bo{V}_{int} \ne \emptyset$ and $\bo{V}_{end} \ne \emptyset$. For each $p>1$, let $u_p$ be a least energy solution. Then, 
\[
M \subset \bo{G} \setminus \bo{V}.
\]
Moreover, for every sequence $\{p_n\}$ with $p_n \to \infty$, there exist a subsequence, still denoted by $\{p_n\}$, and a point $a_* \in M$ such that
\[
u_{p_n} \longrightarrow \frac{G(\cdot, a_*)}{G(a_*,a_*)}
\]
strongly in $H_0^1(\bo{G})$ and uniformly on $\bo{G}$.
\end{theorem}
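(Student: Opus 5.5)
The strategy is to pass to the limit in the Sobolev-type variational characterization of $\sigma_p$. In one dimension $H_0^1(\bo G)\hookrightarrow C(\bo G)$ compactly, and the natural limiting quantity is
\[
S_\infty:=\inf\Big\{\int_{\bo G}|\nabla u|^2\,dx \;\Big|\; u\in H_0^1(\bo G),\ \|u\|_{L^\infty(\bo G)}=1\Big\}.
\]

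\textbf{Step 1: compute $S_\infty$ and identify its minimizers.} If $\|u\|_\infty=1$ and $|u(a)|=1$, then by \eqref{eqn;GF} with $\varphi=u$ and Cauchy--Schwarz,
\[
1=u(a)^2=\Big(\int_{\bo G}\nabla_xG(x,a)\nabla u\,dx\Big)^2\le G(a,a)\int_{\bo G}|\nabla u|^2dx .
\]
Hence $\int|\nabla u|^2\ge 1/G(a,a)\ge 1/\max_xG(x,x)$. Equality forces $u=\pm G(\cdot,a)/G(a,a)$ with $a\in M$. Conversely, $G(\cdot,a)/G(a,a)$ attains its maximum value $1$ at $a$: since $G(\cdot,a)$ is harmonic off $a$ and vanishes on $\bo V_{end}$, the maximum principle with Kirchhoff conditions applies. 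Its energy is $1/G(a,a)$. Therefore $S_\infty=1/\max G(x,x)$, and the minimizers are exactly the normalized Green functions centered at points of $M$.

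\textbf{Step 2: $M\cap\bo V=\emptyset$.} Since $G(x,x)=0$ at end vertices, it suffices to treat $\bo v\in\bo V_{int}$. I would compute $G(\cdot,\bo v)$ explicitly. It is piecewise affine, and by symmetry of the extremal problem it equals $\sup_{\|\nabla\varphi\|_2=1}\varphi(a)^2$, i.e. an effective resistance to the Dirichlet boundary. Then I would move $a$ from $\bo v$ along an edge $\bo e$ by $t$ and show that the one-sided derivative $\frac{d}{dt}G(a_t,a_t)|_{t=0^+}>0$ for some edge. Writing $a_t$ as a point splitting $\bo e$, the resistance formula gives
\[
G(a_t,a_t)=\big(1/R_1(t)+1/R_2(t)\big)^{-1},
\]
where $R_1(t)=t+r_{\bo v}$, with $r_{\bo v}$ the resistance from $\bo v$ to the boundary in $\bo G$ with the piece $[a_t,\text{other end of }\bo e]$ removed, and $R_2(t)$ is the resistance along the rest of $\bo e$. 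The derivative at $0^+$ is $(\gamma^2-\beta^2)\cdot(\text{positive})$, where $\gamma=\frac{R_2}{R_1+R_2}$ and $\beta=\frac{R_1}{R_1+R_2}$ are the flux fractions: the unit current injected at $\bo v$ splits between $\bo e$ and the other edges. Because $\deg\bo v\ge3$, some edge carries flux fraction strictly less than $1/2$. Moving into that edge increases $G$, so $\bo v\notin M$. This is the step I expect to be the main obstacle, since it requires care with the flux decomposition and the possibility of cycles; the Kirchhoff balance $\sum$ flux $=1$ over at least $3$ edges is the key inequality.

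\textbf{Step 3: asymptotics of $u_p$.} From the Nehari identity, $\int|\nabla u_p|^2=\int u_p^{p+1}=\frac{2(p+1)}{p-1}\sigma_p$, and $\sigma_p$ is given by the Rayleigh quotient $\inf\|\nabla u\|_2^2/\|u\|_{p+1}^2$ raised to suitable powers. Testing with $G(\cdot,a)/G(a,a)$ for $a\in M$ gives $\|u\|_{p+1}\to\|u\|_\infty=1$, so $\limsup\int|\nabla u_p|^2\le S_\infty$. For the reverse, set $v_p=u_p/\|u_p\|_\infty$. Then $\int u_p^{p+1}\le\|u_p\|_\infty^{p-1}\int u_p^2$, and $\int|\nabla u_p|^2\ge S_\infty\|u_p\|_\infty^2$. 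Combining these, $\|u_p\|_\infty^{p-1}$ is bounded below, so $\liminf\|u_p\|_\infty\ge1$. Also $\|u_p\|_\infty^2\le\int|\nabla u_p|^2/S_\infty$ is bounded above by $1+o(1)$. Hence $\|u_p\|_\infty\to1$ and $\int|\nabla u_p|^2\to S_\infty$. Along a subsequence, $u_{p_n}\rightharpoonup u$ in $H_0^1$ and uniformly, with $\|u\|_\infty=1$, $u\ge0$, and $\int|\nabla u|^2\le S_\infty$. By Step 1, $u=G(\cdot,a_*)/G(a_*,a_*)$ for some $a_*\in M$. Convergence of the norms upgrades weak convergence to strong convergence in $H_0^1(\bo G)$, and uniform convergence follows from the one-dimensional embedding.
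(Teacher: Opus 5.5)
Your Steps 1 and 3 are correct. Step 3 is a mild variant of the paper's route: you normalize by $\|u_p\|_{L^\infty}$ and use the Nehari identity together with $S_\infty(\bo{G})\|u\|_{L^\infty}^2\le\|\nabla u\|_{L^2}^2$, whereas the paper passes through $L^{p+1}$-normalized minimizers and $S_p(\bo{G})^{1/(p-1)}\to1$. Your maximum-principle argument for $\max G(\cdot,a)=G(a,a)$ holds for every pole $a$, while the paper's Cauchy--Schwarz argument uses $a\in M$.

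The gap is in Step 2. The formula $G(a_t,a_t)=(R_1(t)^{-1}+R_2(t)^{-1})^{-1}$ with $R_1'=1$, $R_2'=-1$ is a parallel decomposition. It holds only when the point $a_t$ disconnects $\bo{G}$ into a part containing $\bo{v}$ and a part containing the far endpoint $\bo{w}$ of $\bo{e}$, i.e.\ when $\bo{e}$ is a bridge. Theorem 1 does not exclude cycles, and the edge you need, the one carrying flux fraction $<1/2$, may lie on a cycle. Take, for instance, a triangle $\bo{v}\bo{v}_2\bo{v}_3$ with unit sides and a pendant edge at each vertex, the one at $\bo{v}$ of length $\varepsilon<1$ and the others of length $1$. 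The current fraction through the short pendant bridge is $1/(1+\varepsilon)>1/2$, so moving along it decreases $G(a,a)$. The two edges with small flux are triangle edges, on which $\bo{G}\setminus\{a_t\}$ stays connected, so the two ``branches'' issuing from $a_t$ share the rest of the network and your formula is false there. Hence your computation does not show $\bo{v}\notin M$ for general graphs; flagging cycles as the obstacle does not resolve it.

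What is missing is a formula for the one-sided derivative along an arbitrary incident edge. The paper obtains it from the Hilbert-space structure rather than from circuit reduction. Set $h_s=G(\cdot,a_s)-G(\cdot,\bo{v})$ and expand $\|\nabla G(\cdot,a_s)\|_{L^2}^2=\|\nabla(G(\cdot,\bo{v})+h_s)\|_{L^2}^2$. Using the defining identity of $G$, symmetry, and linearity of $G(\cdot,\bo{v})$ on $\bo{e}$, this gives
\[
G(a_s,a_s)-G(\bo{v},\bo{v})=2s\,\partial_{\bo{e}}G(\bo{v},\bo{v})+\|\nabla h_s\|_{L^2(\bo{G})}^2 .
\]
Since $\int_{\bo{G}}\nabla h_s\nabla\varphi\,dx=\varphi(a_s)-\varphi(\bo{v})$, testing with $\varphi=h_s$ (plus Cauchy--Schwarz on the segment $[\bo{v},a_s]$) and with a tent function supported in $\bo{e}$ yields $s-s^2/\delta\le\|\nabla h_s\|_{L^2}^2\le s$. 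So the right derivative along every incident edge is $1+2\partial_{\bo{e}}G(\bo{v},\bo{v})$, which is your $1-2\beta_{\bo{e}}$, with no bridge assumption. The Kirchhoff identity $\sum_j\partial_{\bo{e}_j}G(\bo{v},\bo{v})=-1$ then makes these derivatives sum to $\deg\bo{v}-2>0$, so some edge has a positive derivative. One could also repair your circuit computation by reducing the rest of the network to an equivalent three-terminal network on $\bo{v}$, $\bo{w}$ and the grounded boundary, but that is more cumbersome.
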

Theorem~\ref{main} describes the asymptotic behavior of least energy solutions in terms of the Dirichlet--Kirchhoff Green function. The following theorem shows that the distance between the maximum points and the set $M$ tends to zero as $p \to \infty$.
\begin{theorem}
\label{main2}
Let $u_p$ be the least energy solution of \eqref{metric graph}, and let $x_p \in \bo{G}$ be a maximum point of $u_p$. Then
\[
dist(x_p, M) \longrightarrow 0\qquad \mbox{as } p \to \infty.
\]
In particular, if $M = \{a_*\}$, then
\[
x_p \longrightarrow a_*\qquad \mbox{as } p \to \infty.
\]
\end{theorem}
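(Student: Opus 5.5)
We argue by contradiction and use Theorem~\ref{main}. Suppose the claim fails. Then there exist $\varepsilon>0$ and a sequence $p_n\to\infty$ with maximum points $x_{p_n}$ of $u_{p_n}$ such that $dist(x_{p_n},M)\ge\varepsilon$ for all $n$. Since $\bo{G}$ is compact, we may pass to a subsequence along which $x_{p_n}\to \bar x\in\bo{G}$. By closedness of the distance condition, $dist(\bar x,M)\ge\varepsilon$. Applying Theorem~\ref{main} and passing to a further subsequence, we find $a_*\in M$ with
\[
u_{p_n}\to w:=\frac{G(\cdot,a_*)}{G(a_*,a_*)}
\]
uniformly on $\bo{G}$.

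The key step is to identify the maximum points of $w$. The plan is to show that $a_*$ is the unique maximum point of $G(\cdot,a_*)$. Since $G(\cdot,a_*)\in H_0^1(\bo G)$ satisfies $-\Delta G(\cdot,a_*)=\delta_{a_*}$, it is harmonic, hence affine, on each edge away from $a_*$. It satisfies Kirchhoff conditions at the interior vertices, vanishes at $\bo V_{end}$, and is positive in $\bo G\setminus\bo V_{end}$, by the maximum principle or by the variational characterization: replacing $G$ by $|G|$ does not increase the Dirichlet energy.

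Suppose $G(\cdot,a_*)$ attained its maximum $m$ on the closed set $\bo G\setminus\{a_*\}$ at some point $z\neq a_*$. Let $K$ be the connected component of $\bo G\setminus\{a_*\}$ containing $z$. Since $G(\cdot,a_*)$ is harmonic in the graph sense on $K$, the strong maximum principle on metric graphs applies: an affine function with an interior maximum on an edge is constant, and at a Kirchhoff vertex a maximum forces all outgoing slopes to be $\le0$ with zero sum, hence all zero. Therefore $G(\cdot,a_*)$ would be constant on $K$. But every component of $\bo G\setminus\{a_*\}$ either meets $\bo V_{end}$, where $G=0<m$, or has closure containing only $a_*$ as a boundary point. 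In the latter case $G$ is constant, equal to $G(a_*,a_*)$, near $a_*$ on that side, which contradicts the jump condition of the derivative at $a_*$ only if all sides are flat. So it is cleaner to argue directly: the outward slopes of $G(\cdot,a_*)$ at $a_*$ sum to $-1$, so on at least one incident edge $G$ decreases away from $a_*$. Using harmonicity, one propagates that on every incident edge $G$ is nonincreasing away from $a_*$ and $\max G = G(a_*,a_*)$. Any other maximizer would force constancy on a component reaching an end vertex, which is impossible.

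Hence $w\le 1=w(a_*)$, with equality only at $a_*$. This uniqueness of the maximizer is the step I expect to need the most care, particularly with the case analysis at vertices and the fact, from Theorem~\ref{main}, that $a_*\notin\bo V$.

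Finally, uniform convergence gives
\[
w(\bar x)=\lim_n u_{p_n}(x_{p_n})=\lim_n\|u_{p_n}\|_\infty=\|w\|_\infty=1,
\]
using $|u_{p_n}(x_{p_n})-w(x_{p_n})|\to0$ and the continuity of $w$. By uniqueness of the maximizer, $\bar x=a_*\in M$, which contradicts $dist(\bar x,M)\ge\varepsilon$.

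If $M=\{a_*\}$, then $dist(x_p,M)=d(x_p,a_*)\to0$, which is the second statement.
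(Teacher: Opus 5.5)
Your skeleton (compactness of $\bo{G}$, Theorem~\ref{main} along a subsequence, and $w(\bar x)=\lim_n\|u_{p_n}\|_{L^\infty(\bo G)}=1$) is the same as the paper's. The weak bound $G(\cdot,a_*)\le G(a_*,a_*)$ does follow from the maximum principle. The gap is in the \emph{strict} uniqueness of the maximizer. That step uses only $a_*\notin\bo{V}$, never $a_*\in M$, and in that generality it is false.

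You isolate the bad case yourself: a component $K$ of $\bo{G}\setminus\{a_*\}$ containing no end vertex. Your ``cleaner'' argument then simply asserts that every component reaches $\bo{V}_{end}$. Theorems~\ref{main} and \ref{main2} allow cycles, and then this fails. Take the complete graph on four vertices $A,B,C,D$, join $A$ by a bridge to a vertex $E$, and attach two pendant edges at $E$; all degrees are $3$ or $4$. For $a$ inside the bridge, the component $K$ of $\bo{G}\setminus\{a\}$ containing $A$ has no end vertex. Hence $\varphi=(G(\cdot,a)-G(a,a))\chi_K$, extended by zero, lies in $H_0^1(\bo G)$ with $\varphi(a)=0$, and \eqref{eqn;GF} gives $\int_K|\nabla G(\cdot,a)|^2\,dx=0$. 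So $G(\cdot,a)\equiv G(a,a)$ on all of $K$. Harmonicity and the slope condition at $a$ (slope $0$ into $K$, slope $-1$ on the other side, sum $-1$) are perfectly consistent with this. If this happened for your $a_*$, the limit $\bar x$ could lie anywhere in $K$, and you would get no contradiction with $dist(\bar x,M)\ge\varepsilon$.

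The missing ingredient is the maximality of $G(a_*,a_*)$, which is exactly what the paper uses in Proposition~\ref{maximum; GF}. By \eqref{eqn;GF} and Cauchy--Schwarz,
$G(x,a_*)=\int_{\bo G}\nabla G(\cdot,x)\nabla G(\cdot,a_*)\,dy\le\sqrt{G(x,x)G(a_*,a_*)}\le G(a_*,a_*)$.
Equality throughout forces $G(\cdot,x)=G(\cdot,a_*)$, hence $\varphi(x)=\varphi(a_*)$ for all $\varphi\in H_0^1(\bo G)$, so $x=a_*$.

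Equivalently, within your own approach: suppose $K$ were a flat component and $x\in K$. Then $G(x,a_*)=G(a_*,a_*)$, and the same Cauchy--Schwarz equality analysis gives $G(x,x)>G(a_*,a_*)$, contradicting $a_*\in M$. With this in place of the maximum-principle step for strictness, the rest of your proof goes through.
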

\begin{remark}
The least energy solution is not necessarily the unique positive solution for sufficiently large $p$. Indeed, consider an equilateral Y-shaped metric graph whose three edges have the same length $L$. Let $w_p$ be the positive solution of
\[
-w'' = w^p \quad \mbox{in } (-L,L), \qquad w(-L) = w(L) = 0.
\]
By symmetry, $w_p'(0)=0$. On each edge of the Y-shaped graph, identified with $[0,L]$ from the interior vertex $\bo{v}_0$ to an end vertex, define
\[
\widehat{u}_p(s)=w_p(s).
\]
Then $\widehat{u}_p$ is continuous at $\bo{v}_0$ and satisfies
\[
\sum_{\bo{e} \succ \bo{v}_0} \partial_{\bo{e}}\widehat{u}_p(\bo{v}_0) = 3w_p'(0)=0.
\]
Thus $\widehat{u}_p$ is a positive solution on the Y-shaped graph, and its maximum is attained at the interior vertex $\bo{v}_0$.

On the other hand, let $x_p$ be a maximum point of the least energy solution $u_p$. By Theorem~\ref{main2},
\[
dist(x_p,M) \longrightarrow 0.
\]
Since $\bo{v}_0 \notin M$ by Theorem~\ref{main}, we have
\[
x_p \neq \bo{v}_0
\]
for all sufficiently large $p$. Hence, $\widehat{u}_p$ and $u_p$ are distinct for all sufficiently large $p$. In particular, the problem admits at least two distinct positive solutions for all sufficiently large $p$.
\end{remark}
We next consider a simple example of a $Y$-shaped metric graph. In this case, the Dirichlet--Kirchhoff Green function can be computed explicitly, which allows us to determine the set $M$ and to describe explicitly the asymptotic behavior of the least energy solutions and their maximum points.
\begin{example}
Let $\bo{G}$ be a $Y$-shaped metric graph consisting of three edges $\bo{e}_1$, $\bo{e}_2$, and $\bo{e}_3$ joined at a common interior vertex $\bo{v}_0$. We identify each edge $\bo{e}_i$ with the interval $[0,l_i]$, where $0$ corresponds to $\bo{v}_0$ and $l_i$ corresponds to the end vertex $\bo{v}_i$, for $i=1,2,3$. Without loss of generality, we assume that
\[
l_1 \geq l_2 \geq l_3 > 0.
\]
Let $a\in\bo{e}_1$ and denote by $s\in[0,l_1]$ the distance from $\bo{v}_0$ to $a$. A direct computation of the Dirichlet--Kirchhoff Green function gives
\[
G(a,a) = \frac{(l_1-s)\left( s+\dfrac{l_2l_3}{l_2+l_3} \right)} { l_1+\dfrac{l_2l_3}{l_2+l_3} }.
\]
Since
\[
\frac{l_2l_3}{l_2+l_3}<l_3\leq l_1,
\]
the maximum of $G(a,a)$ on $\bo{e}_1$ is attained at the unique interior point $a_1^*\in\bo{e}_1$ whose distance from $\bo{v}_0$ is
\[
dist(a_1^*,\bo{v}_0) = \frac{1}{2} \left(l_1-\frac{l_2l_3}{l_2+l_3} \right).
\]
Hence,
\[
\max_{a\in\bo{e}_1}G(a,a) = G(a_1^*,a_1^*) = \frac{1}{4} \left( l_1+\frac{l_2l_3}{l_2+l_3}\right).
\]
The same computation applies to the other edges by permuting the indices. Comparing the maximum values on the three edges, we find that the largest value is attained on the longest edges. Therefore,
\[
\max_{x\in\bo{G}}G(x,x) = \frac{1}{4}\left(l_1+\frac{l_2l_3}{l_2+l_3}\right).
\]
In particular, $\bo{v}_0\notin M$. If $l_1>l_2\geq l_3$, then
\[
M=\{a_1^*\}.
\]
If $l_1=l_2>l_3$, then
\[
M=\{a_1^*,a_2^*\}.
\]
Finally, if $l_1=l_2=l_3=L$, then
\[
M=\{a_1^*,a_2^*,a_3^*\},
\]
where, for each $i=1,2,3$,
\[
dist(a_i^*,\bo{v}_0)=\frac{L}{4}.
\]
Therefore, Theorem~\ref{main2} yields
\[
dist\left( x_p,\{a_1^*,a_2^*,a_3^*\} \right) \longrightarrow 0 \qquad \mbox{as }p\to\infty.
\]
\end{example}

We next consider the Lane--Emden problem with Kirchhoff--Neumann boundary conditions:
\begin{equation}
\label{E-L Neumann}
\begin{cases}
-\Delta u_p^{(\bo{e})} = |u_p^{(\bo{e})}|^{p-1}u_p^{(\bo{e})} & \text{for each edge } \bo{e}\in \bo{E},\\
\displaystyle\sum_{\bo{e} \succ \bo{v}} \partial u_p^{(\bo{e})}(\bo{v}) = 0 & \text{for each vertex } \bo{v}\in \bo{V}_{int},\\
\partial u_p^{(\bo{e})}(\bo{v}) = 0& \text{for each vertex } \bo{v}\in \bo{V}_{end},\\
u_p^{(\bo{e})}(\bo{v}) = u_p^{(\bo{e}')}(\bo{v})& \text{if } \bo{e}\succ\bo{v} \text{ and } \bo{e}' \succ \bo{v}.
\end{cases}
\end{equation}
Here, the second line is the Kirchhoff condition, the third line is the Neumann boundary condition, and the last line is the continuity
condition at $\bo{v}$. Integrating the equation in \eqref{E-L Neumann} over $\bo{G}$ and using the Kirchhoff--Neumann boundary conditions, we obtain
\[
\int_{\bo{G}}|u_p|^{p-1}u_p\,dx = 0.
\]
In particular, every nontrivial solution of \eqref{E-L Neumann} changes sign. 

Least energy nodal solutions for pure Neumann problems have been studied in several settings. Parini and Weth \cite{P-W} considered the sublinear Neumann problem and established the existence and qualitative properties of least energy nodal solutions. Salda\~{n}a and Tavares \cite{S-T2018} studied least energy nodal solutions for Hamiltonian elliptic systems with pure Neumann boundary conditions, including the associated scalar problem. More recently, in \cite{S-T2022}, they investigated the scalar pure Neumann Lane--Emden equation, which is the corresponding problem on bounded domains in Euclidean spaces, and studied least energy solutions and their asymptotic behavior with respect to the exponent.

We therefore set
\[
\mathcal{A}_p := \left\{u \in H^1(\bo{G}) \setminus \{0\}\ \middle| \int_{\bo{G}}|u|^{p-1}u\,dx = 0 \right\},
\]
and define
\[
\sigma_p^N := \inf_{u \in \mathcal{A}_p} \sup_{t>0}J_p(tu).
\]
Then, for each $p>1$, there exists a solution $u_p$ such that $J_p(u_p)=\sigma_p^N$. We call $u_p$ a least energy solution. For the Kirchhoff--Neumann problem, we define
\begin{equation}
\label{S_infty^N}
S_\infty^N(\bo{G}) := \inf_{\substack{u \in H^1(\bo{G})\\ \max_{\bo{G}}u=1,\ \min_{\bo{G}}u=-1}} \int_{\bo{G}}|\nabla u|^2 dx.
\end{equation}
We also define the set
\[
\mathcal{D} := \left\{ (a, b) \in \bo{G} \times \bo{G}\ \middle|\ dist(a, b) = \max_{x, y \in \bo{G}}dist(x, y) \right\}.
\]
Large exponent asymptotics for Neumann-type problems have also been studied in the literature. Takahashi \cite{Takahashi2014} considered a two-dimensional elliptic problem with a nonlinear Neumann boundary condition and investigated the asymptotic behavior of its least energy solutions as $p \to \infty$. In particular, he showed that the solutions develop a single peak on the boundary and that the location of the peak is determined by the Green function associated with the corresponding linear problem. The following theorem describes the asymptotic behavior of the least energy solutions for the Kirchhoff--Neumann problem.
\begin{theorem}
\label{main3}
Assume that $\bo{G}$ is a compact metric graph without cycles. For each $p > 1$, let $u_p$ be a least energy solution of \eqref{E-L Neumann}. Then, for every sequence $\{p_n\}$ with $p_n \to \infty$, there exist a subsequence, still denoted by $\{p_n\}$, and a pair $(a_*, b_*) \in \mathcal{D}$ such that
\[
u_{p_n} \longrightarrow u_\infty
\]
strongly in $H^1(\bo{G})$ and uniformly on $\bo{G}$, where $u_\infty$ is a minimizer of the limiting variational problem \eqref{S_infty^N} and satisfies
\[
u_\infty(a_*) = -1, \qquad u_\infty(b_*) = 1.
\]
Moreover, if $\bo{U}$ denotes the unique path joining $a_*$ and $b_*$, then
\[
u_\infty(x) = -1+\frac{2dist(a_*, x)}{dist(a_*, b_*)},
\qquad x\in\bo{U},
\]
and $u_\infty$ is constant on each connected component of $\bo{G} \setminus \bo{U}$.
\end{theorem}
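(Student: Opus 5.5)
We first identify the limiting problem \eqref{S_infty^N}, and then transfer the information to $u_p$ through energy comparison. \textbf{Step 1 (limit problem).} Let $D:=\max_{x,y}dist(x,y)$. For any admissible $u$ with $u(a)=-1$ and $u(b)=1$, we restrict to the unique path $\bo U$ from $a$ to $b$ (the graph has no cycles). Cauchy--Schwarz on $\bo U$ gives
\[
4\le \Big(\int_{\bo U}|\nabla u|\Big)^2\le dist(a,b)\int_{\bo U}|\nabla u|^2 .
\]
Hence $\int_{\bo G}|\nabla u|^2\ge 4/dist(a,b)\ge 4/D$. Conversely, fix $(a,b)\in\mathcal D$. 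On $\bo U$ take the linear profile of the statement, and extend it by constants on each component of $\bo G\setminus\bo U$. Since the graph is a tree, each such component is attached to $\bo U$ at exactly one point, so the extension is well defined. This shows $S_\infty^N=4/D$. Equality in this chain forces three things: $dist(a,b)=D$, $\nabla u=0$ off $\bo U$, and $u$ linear on $\bo U$. This characterizes the minimizers exactly as in the statement.

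\textbf{Step 2 (energy asymptotics).} We prove $p\,\sigma_p^N\cdot\frac{2(p+1)}{p-1}\cdot\frac1p\to$ the appropriate constant. Concretely, we show $\|\nabla u_p\|_2^2=\|u_p\|_{p+1}^{p+1}$ and
\[
\|\nabla u_p\|_2^2\to S_\infty^N .
\]
\emph{Upper bound:} take the minimizer $v$ from Step 1. We perturb $v$ slightly, e.g.\ by a shift $v+c_p$ with $c_p\to0$, so that $\int|v+c_p|^{p-1}(v+c_p)=0$; such a $c_p$ exists by the intermediate value theorem. Then $\sup_t J_p(t(v+c_p))$ is expressed as $\frac{p-1}{2(p+1)}\big(\|\nabla v\|^2/\|v+c_p\|_{p+1}^2\big)^{\frac{p+1}{p-1}}$. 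Since $\|v+c_p\|_{p+1}\to\|v\|_\infty=1$, we get
\[
\limsup\|\nabla u_p\|_2^2\le S_\infty^N .
\]
\emph{Lower bound:} the estimate $\|\nabla u_p\|^2\le C$, together with the one-dimensional embedding $H^1\subset C^{0,1/2}$ and the Nehari identity, bounds $\|u_p\|_\infty$ from above and below. The point is the following. If $\|u_p\|_\infty\le 1-\delta$, then $\|u_p\|_{p+1}^{p+1}\to0$, which contradicts the lower bound on $\|\nabla u_p\|^2$ coming from a Poincaré-type inequality on $\mathcal A_p$. Hence $\liminf\|u_p^\pm\|_\infty\ge1$. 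Conversely, $\int|u_p|^{p+1}\le |\bo G|\,\|u_p\|_\infty^{p+1}$ gives $\limsup\|u_p\|_\infty\le1$. We also need both $\max u_p\to1$ and $\min u_p\to-1$. For this we use $\int|u_p|^{p-1}u_p=0$: it forces $\|u_p^+\|_{p}^{p}=\|u_p^-\|_p^p$, so the positive and negative parts carry comparable $L^p$ mass, and their sup-norms have the same limit. Normalizing, $w_p:=u_p/\|u_p\|_\infty$ is nearly admissible, which yields
\[
\liminf\|\nabla u_p\|^2\ge S_\infty^N .
\]

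\textbf{Step 3 (convergence).} By the $H^1$ bound and the compact embedding into $C(\bo G)$, a subsequence satisfies $u_{p_n}\weakto u_\infty$ weakly in $H^1$ and uniformly. From Step 2, $\max u_\infty=1$ and $\min u_\infty=-1$. Weak lower semicontinuity then gives $\int|\nabla u_\infty|^2\le S_\infty^N$, so $u_\infty$ is a minimizer. Since the norms converge, the convergence is strong in $H^1$. Step 1 then produces the pair $(a_*,b_*)\in\mathcal D$ and the stated profile.

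\textbf{Main obstacle.} The delicate point is Step 2's claim that both $\max u_p\to1$ and $\min u_p\to-1$, rather than only $\|u_p\|_\infty\to1$. Equality of the $L^p$ masses of $u_p^\pm$ must be converted into equality of their sup-norms in the limit. I would argue by contradiction: suppose $\max u_p^-\le 1-\delta$. Then $\|u_p^-\|_p^p$ decays exponentially. On the other hand, Hölder continuity near a maximum point of $u_p^+$ (with $\max u_p^+\ge1-o(1)$) gives $\|u_p^+\|_p^p\gtrsim p^{-2}(1-o(1))^p$, a subexponential lower bound. This is incompatible with the equality of the two masses.
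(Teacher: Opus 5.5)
Your route is essentially the paper's. You use Cauchy--Schwarz along the unique path and its equality case for $S_\infty^N(\bo G)=4/\max dist$ and for the minimizers; a shifted minimizer placed in $\mathcal A_p$ for the upper bound; compactness in $C(\bo G)$; the identity $\|u_p^+\|_{L^p}=\|u_p^-\|_{L^p}$ to force $\max u_\infty=1=-\min u_\infty$; and weak lower semicontinuity plus convergence of norms for strong $H^1$ convergence. The differences are minor. You work with $u_p$ and the Nehari identity instead of the $L^{p+1}$-normalized minimizers $v_p$ with $u_p=S_p^N(\bo G)^{1/(p-1)}v_p$. You also balance $u_p^\pm$ at finite $p$ by a H\"older mass estimate, whereas the paper first passes to the uniform limit and uses $\|v_p^\pm\|_{L^p}\to\|v_\infty^\pm\|_{L^\infty}$, which is shorter.

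One step fails as written, though the repair is easy. The inequality $\int_{\bo G}|u_p|^{p+1}dx\le|\bo G|\,\|u_p\|_{L^\infty(\bo G)}^{p+1}$ bounds $\|u_p\|_{L^\infty(\bo G)}$ from below, not from above, so it cannot give $\limsup_p\|u_p\|_{L^\infty(\bo G)}\le 1$. You need the equicontinuity you already mentioned. Since $|u_p(x)-u_p(y)|\le C\,dist(x,y)^{1/2}$ uniformly in $p$, the bound $\|u_p\|_{L^\infty(\bo G)}\ge 1+\delta$ would give $|u_p|\ge 1+\delta/2$ on a set of measure bounded below independently of $p$. Then $\int_{\bo G}|u_p|^{p+1}dx\to\infty$, contradicting $\int_{\bo G}|u_p|^{p+1}dx=\int_{\bo G}|\nabla u_p|^2dx\le C$; this is the paper's argument, done after passing to the uniform limit. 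Alternatively, once you know $\max u_\infty=-\min u_\infty=:m\ge 1$, the function $u_\infty/m$ is admissible, so $m^2S_\infty^N(\bo G)\le\int_{\bo G}|\nabla u_\infty|^2dx\le S_\infty^N(\bo G)$, which forces $m=1$.

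Two smaller remarks. First, you assert $c_p\to 0$ without proof. The upper bound only needs $\liminf_p\|v+c_p\|_{L^{p+1}(\bo G)}\ge 1$, and this holds because $|c_p|<1$ and $\|v+c\|_{L^\infty(\bo G)}=1+|c|\ge 1$ for every $c$. Second, the lower bound via the ``nearly admissible'' $w_p$ is unnecessary. Step~3 already gets minimality of $u_\infty$ and convergence of the Dirichlet energies from the upper bound and weak lower semicontinuity alone.
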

The following theorem describes the asymptotic behavior of the maximum and minimum points of the least energy solutions.
\begin{theorem}
\label{main4}
Assume that $\bo{G}$ is a compact metric graph without cycles. Let $u_p$ be a least energy solution of \eqref{E-L Neumann}, and let $x_p^+, x_p^- \in \bo{G}$ be a maximum point and a minimum point of $u_p$, respectively. Then
\[
dist(x_p^-, x_p^+) \longrightarrow \max_{x,y \in \bo{G}}dist(x,y) \qquad \mbox{as } p \to \infty.
\]
\end{theorem}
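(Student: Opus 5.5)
We argue by contradiction, using Theorem~\ref{main3} along subsequences. Set $D:=\max_{x,y\in\bo{G}}dist(x,y)$. Since $dist(x_p^-,x_p^+)\le D$ always, it suffices to show that every sequence $p_n\to\infty$ has a subsequence along which $dist(x_{p_n}^-,x_{p_n}^+)\to D$.

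First, pass to the subsequence given by Theorem~\ref{main3}, so that $u_{p_n}\to u_\infty$ uniformly on $\bo{G}$, with $u_\infty(a_*)=-1$, $u_\infty(b_*)=1$ and $(a_*,b_*)\in\mathcal{D}$. Since $\bo{G}$ is compact, we may extract further so that $x_{p_n}^+\to x^+$ and $x_{p_n}^-\to x^-$. Uniform convergence and continuity of $u_\infty$ then give
\[
u_{p_n}(x_{p_n}^\pm)\to u_\infty(x^\pm).
\]
Also $\max u_{p_n}\to\max u_\infty$ and $\min u_{p_n}\to\min u_\infty$. Hence $x^+$ is a maximum point and $x^-$ a minimum point of $u_\infty$.

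Next, identify $\max u_\infty=1$ and $\min u_\infty=-1$. Since $u_\infty$ is a minimizer of \eqref{S_infty^N}, it lies in the admissible class, so $\max u_\infty=1$ and $\min u_\infty=-1$. Consequently $u_\infty(x^+)=1$ and $u_\infty(x^-)=-1$.

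The key step is to show that any points $x^+,x^-$ with $u_\infty(x^+)=1$ and $u_\infty(x^-)=-1$ satisfy $dist(x^-,x^+)=D$. I would not try to locate $x^\pm$ exactly. Instead, apply the structure from Theorem~\ref{main3} and a one-dimensional Cauchy--Schwarz argument. Let $\gamma$ be the unique path from $x^-$ to $x^+$. On $\gamma$ we have
\[
2=\int_\gamma u_\infty'\le dist(x^-,x^+)^{1/2}\Big(\int_\gamma|\nabla u_\infty|^2\Big)^{1/2}.
\]
This gives
\[
\int_{\bo{G}}|\nabla u_\infty|^2\ge \frac{4}{dist(x^-,x^+)}\ge\frac4D.
\]
On the other hand, the explicit profile of Theorem~\ref{main3} gives
\[
\int_{\bo{G}}|\nabla u_\infty|^2=\Big(\frac{2}{D}\Big)^2D=\frac4D,
\]
because $u_\infty$ is linear with slope $2/D$ on $\bo{U}$ and constant off $\bo{U}$. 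Thus $4/dist(x^-,x^+)\le 4/D$, so $dist(x^-,x^+)\ge D$, and hence $dist(x^-,x^+)=D$.

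Finally, continuity of the distance function yields $dist(x_{p_n}^-,x_{p_n}^+)\to dist(x^-,x^+)=D$. The subsequence principle then gives convergence of the full family as $p\to\infty$.

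The main obstacle is the key step. It must exclude the possibility that the extremal values $\pm1$ of $u_\infty$ are also attained at points off the diametral path, for instance on a component of $\bo{G}\setminus\bo{U}$ where $u_\infty$ is constant. The energy identity handles this, provided we use the precise energy $4/D$ from Theorem~\ref{main3}.
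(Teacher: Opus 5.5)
Your proof is correct and is essentially the paper's argument (subsequence from Theorem~\ref{main3}, limits $x^\pm$ of the extremal points with $u_\infty(x^\pm)=\pm1$ by uniform convergence, then continuity of the distance). Your ``key step'' is precisely Proposition~\ref{characterization Neumann minimizer}, which the paper cites and proves by the same Cauchy--Schwarz estimate along the path. The only difference is that the paper obtains the energy value $4/D$ from $S_\infty^N=4/D$ (Proposition~\ref{S_infty Neumann distance}), whereas you read it off the explicit profile, which is equivalent.
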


The remainder of this paper is organized as follows. Section \ref{sect;2} introduces the variational framework for problem \eqref{metric graph}, together with the optimal Sobolev constant and its relation to least energy solutions. Section \ref{sect;3} is devoted to the limiting variational problem and the Dirichlet--Kirchhoff Green function, culminating in a characterization of the minimizers. Building on these results, we establish the convergence of the optimal Sobolev constants and their minimizers in Section \ref{sect;4}. The proofs of Theorem~\ref{main} and Theorem~\ref{main2} are completed in Section \ref{sect;5}. In Section \ref{sect;6}, we study the local asymptotic behavior of the least energy solutions around their maximum points. By introducing a suitable rescaling, we derive the limiting one-dimensional Liouville equation and identify the limiting profile explicitly.

In Section \ref{sect;7}, we introduce the variational framework for the Kirchhoff--Neumann problem \eqref{E-L Neumann} and the corresponding optimal Sobolev constant. Section \ref{sect;8} is devoted to the limiting variational problem for the Kirchhoff--Neumann problem and the convergence of the optimal Sobolev constants and their minimizers. Finally, the proofs of Theorems~\ref{main3} and \ref{main4} are completed in Section~\ref{sect;9}.

The detailed computation of the Dirichlet--Kirchhoff Green function on the $Y$-shaped metric graph is given in Appendix~\ref{Appendix A}.

\section{Variational structure and the Sobolev constant}\label{sect;2}
In this section, we present the variational formulation of problem \eqref{metric graph}. For each $p >1$, we define the optimal Sobolev constant by
\[
S_p(\bo{G}) := \inf_{H_0^1(\bo{G}) \setminus \{0\}} \frac{\int_{\bo{G}} |\nabla u|^2 dx}{\(\int_{\bo{G}} |u|^{p+1} dx\)^{\frac{2}{p+1}}}.
\]
The optimal constant $S_p(\bo G)$ is indeed attained by a nonnegative function that satisfies the corresponding Euler--Lagrange equation, leading us to the solution to problem \eqref{metric graph}, which reads as follows.
\begin{proposition}
\label{S_p}
For each $p>1$, $S_p(\bo{G})$ is attained by a nonnegative function $v_p \in H_0^1(\bo{G})$. Moreover, if $\|v_p\|_{L^{p+1}(\bo{G})} = 1$, then $v_p$ satisfies
\[
-\Delta v_p = S_p(\bo{G})v_p^p\quad \mbox{in}\quad \bo{G}
\]
in the weak sense. Consequently,
\begin{equation}
\label{u_p}
u_p := S_p(\bo{G})^{\frac{1}{p-1}}v_p
\end{equation}
is a positive solution of \eqref{metric graph}.
\end{proposition}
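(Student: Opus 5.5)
The plan is to use the direct method of the calculus of variations on the normalized constraint set and then derive the Euler--Lagrange equation via Lagrange multipliers.

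\emph{Step 1: compact embedding.} We first note that on a compact metric graph the embedding $H^1(\bo G)\hookrightarrow C(\bo G)$ is compact. On each edge $\bo e\cong[0,l(\bo e)]$ the one-dimensional Sobolev embedding $H^1(\bo e)\hookrightarrow C([0,l(\bo e)])$ is compact (Arzel\`a--Ascoli with the H\"older bound $|u(x)-u(y)|\le |x-y|^{1/2}\|\nabla u\|_{L^2}$), and there are finitely many edges. Consequently $H^1(\bo G)\hookrightarrow L^{p+1}(\bo G)$ is compact for every $p>1$.

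We also need a Poincar\'e inequality on $H_0^1(\bo G)$. Since $\bo V_{end}\neq\emptyset$ and $\bo G$ is connected, every $x$ is joined to an end vertex by a path of length at most the total length $|\bo G|$. Hence $|u(x)|\le |\bo G|^{1/2}\|\nabla u\|_{L^2(\bo G)}$, so $\|\nabla u\|_{L^2}$ is an equivalent norm on $H_0^1(\bo G)$ and $S_p(\bo G)>0$. (If the statement is meant without assuming $\bo V_{end}\ne\emptyset$, this step is exactly where the assumption of the main theorem enters.)

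\emph{Step 2: existence of a minimizer.} Take a minimizing sequence normalized so that $\|u_n\|_{L^{p+1}}=1$, and replace $u_n$ by $|u_n|$. This is allowed because $|u_n|\in H_0^1(\bo G)$ with $|\nabla|u_n||=|\nabla u_n|$ a.e.\ on each edge, and continuity at vertices is preserved. By Step 1, $(u_n)$ is bounded in $H_0^1$. Pass to a subsequence with $u_n\weakto v_p$ weakly in $H_0^1$ and $u_n\to v_p$ uniformly. Then $\|v_p\|_{L^{p+1}}=1$ and $v_p\ge0$. Note that $H_0^1$ is weakly closed, since point evaluation is continuous. By weak lower semicontinuity of the Dirichlet integral, $v_p$ attains $S_p(\bo G)$.

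\emph{Step 3: Euler--Lagrange equation.} For $\varphi\in H_0^1(\bo G)$, differentiate the quotient $t\mapsto Q(v_p+t\varphi)$ at $t=0$. Both numerator and $\int|u|^{p+1}$ are $C^1$ on $H_0^1$, the latter because of the embedding into $C(\bo G)$. This gives
\[
\int_{\bo G}\nabla v_p\nabla\varphi\,dx=S_p(\bo G)\int_{\bo G}v_p^{p}\varphi\,dx ,
\]
which is the weak form of $-\Delta v_p=S_pv_p^p$. Scaling by $S_p^{1/(p-1)}$ then gives $-\Delta u_p=u_p^p$ weakly, by homogeneity.

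\emph{Step 4: regularity and positivity.} Choosing $\varphi$ supported in the interior of an edge shows $u_p^{(\bo e)}\in C^2$ with $-u''=u^p$ classically. Choosing $\varphi$ localized near an interior vertex and integrating by parts yields the Kirchhoff condition; the Dirichlet condition holds by membership in $H_0^1$.

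The main point requiring care is strict positivity. Suppose $u_p(x_0)=0$. If $x_0$ is interior to an edge, it is a minimum of a nonnegative $C^2$ function, so $u_p'(x_0)=0$. Uniqueness for the ODE $-u''=u^p$ (locally Lipschitz since $u\ge0$ bounded, $p>1$) gives $u_p\equiv0$ on that edge. If $x_0$ is an interior vertex, each outward derivative is $\le0$ by nonnegativity, and Kirchhoff forces all of them to vanish; ODE uniqueness again gives $u_p\equiv0$ on every incident edge. Propagating through vertices and using connectedness yields $u_p\equiv0$, contradicting $\|v_p\|_{L^{p+1}}=1$. Hence $u_p>0$ on $\bo G\setminus\bo V_{end}$, and $u_p$ is a positive solution of \eqref{metric graph}.
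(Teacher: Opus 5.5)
Your proof is correct and follows the same route as the paper: you use the direct method on the normalized constraint $\|u\|_{L^{p+1}(\bo G)}=1$, with the compact embedding $H_0^1(\bo G)\hookrightarrow C(\bo G)$ and weak lower semicontinuity, then a Lagrange multiplier (first variation) argument, then the scaling $u_p=S_p(\bo G)^{1/(p-1)}v_p$. You also supply details the paper leaves implicit: the Poincar\'e inequality (which genuinely needs $\bo V_{end}\neq\emptyset$), the replacement $u_n\mapsto|u_n|$ that justifies nonnegativity of the minimizer, and the ODE-uniqueness plus Kirchhoff argument that gives strict positivity of $u_p$ on $\bo G\setminus\bo V_{end}$.
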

\begin{proof}
The existence of a nonnegative minimizer follows from the direct method in calculus of variations. Let $v_{p_n}$ be a minimizing sequence for $S_p(\bo{G})$ satisfying $\|v_{p_n}\|_{L^{p+1}(\bo{G})} = 1$.
Since~$v_{p_n}$ is bounded in $H_0^1(\bo{G})$, the compact embedding $H_0^1(\bo{G}) \hookrightarrow C(\bo{G})$ implies that, up  to a subsequence, 
\begin{align*}
\begin{cases}
v_{p_n} \weakto v_p\ &\mbox{weakly in}\ H_0^1(\bo{G}),\\
v_{p_n} \to v_p\ &\mbox{uniformly on}\ \bo{G}.
\end{cases}
\end{align*}
Therefore, $\|v_p\|_{L^{p+1}(\bo{G})} = 1$. By weak lower semicontinuity,
\[
S_p(\bo{G}) = \int_{\bo{G}} |\nabla v_p |^2 dx.
\]
Hence, $v_p$ is a minimizer. By the method of Lagrange multipliers, $v_p$ satisfies
\[
-\Delta v_p = S_p(\bo{G})v_p^p\quad \mbox{in}\quad \bo{G}
\]
in the weak sense. Define $u_p$ by \eqref{u_p}. Then $u_p$ is a positive solution of \eqref{metric graph}.
\end{proof}
The following proposition shows the explicit relation between least energy and the optimal Sobolev constant.
\begin{proposition}
\label{sigma_p}
For each $p>1$, the least energy satisfies
\begin{equation}
\label{sigma_p form}
\sigma_p = \frac{p-1}{2(p+1)}S_p(\bo{G})^{\frac{p+1}{p-1}}.
\end{equation}
Moreover, if $v_p$ is a nonnegative minimizer of $S_p(\bo G)$ satisfying  $\|v_p\|_{L^{p+1}(\bo{G})} = 1$,
then the function
\[
u_p := S_p(\bo{G})^{\frac{1}{p-1}}v_p
\]
attains $\sigma_p$. In particular, $u_p$ is a least energy solution of \eqref{metric graph}. Conversely, if $u_p$ is a least energy solution of \eqref{metric graph}, then
\[
v_p := \frac{u_p}{\| u_p \|_{L^{p+1}(\bo{G})}}
\]
is a nonnegative minimizer of $S_p(\bo{G})$ satisfying $\|v_p\|_{L^{p+1}(\bo{G})} = 1$. Moreover,
\[
u_p = S_p(\bo{G})^{\frac{1}{p-1}}v_p.
\]
\end{proposition}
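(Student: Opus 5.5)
The plan is to compare $\sigma_p$ with $S_p(\bo G)$ through the fibering map $t\mapsto J_p(tu)$ and then use Proposition~\ref{S_p} to produce a function attaining the infimum. Fix $u\in H_0^1(\bo G)\setminus\{0\}$ and write $A=\int_{\bo G}|\nabla u|^2dx>0$ and $B=\int_{\bo G}|u|^{p+1}dx>0$. Then $J_p(tu)=\frac{t^2}{2}A-\frac{t^{p+1}}{p+1}B$ has a unique critical point on $(0,\infty)$, namely $t_u=(A/B)^{1/(p-1)}$, and it is a maximum. Substituting gives
\[
\sup_{t>0}J_p(tu)=\Big(\frac12-\frac1{p+1}\Big)A\,t_u^2=\frac{p-1}{2(p+1)}\left(\frac{A}{B^{2/(p+1)}}\right)^{\frac{p+1}{p-1}}.
\]
Since $s\mapsto s^{(p+1)/(p-1)}$ is increasing on $(0,\infty)$, taking the infimum over $u$ turns the Rayleigh quotient into $S_p(\bo G)$. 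This yields \eqref{sigma_p form}.

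For the direct statement, let $v_p\ge0$ be a minimizer with $\|v_p\|_{L^{p+1}(\bo G)}=1$, as provided by Proposition~\ref{S_p}. Then $A=S_p(\bo G)$ and $B=1$, so $t_{v_p}=S_p(\bo G)^{1/(p-1)}$. Hence $u_p=t_{v_p}v_p$ satisfies $J_p(u_p)=\sup_tJ_p(tv_p)=\sigma_p$. It is a positive solution by Proposition~\ref{S_p}, so it attains $\sigma_p$ and is a least energy solution.

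For the converse, let $u_p$ be a least energy solution, i.e.\ a positive critical point of $J_p$ on $H_0^1(\bo G)$ with $J_p(u_p)=\sigma_p$. The steps are as follows.
\begin{enumerate}
\item Test the equation with $u_p$ itself. This gives the Nehari identity $\int|\nabla u_p|^2=\int u_p^{p+1}$, so $t_{u_p}=1$ and $\sigma_p=J_p(u_p)=\sup_tJ_p(tu_p)$.
\item By the formula above, $\sup_t J_p(tu_p)$ equals $\frac{p-1}{2(p+1)}Q(u_p)^{\frac{p+1}{p-1}}$, where $Q$ is the Rayleigh quotient. Injectivity of the power then gives $Q(u_p)=S_p(\bo G)$. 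Since $Q$ is homogeneous, $v_p=u_p/\|u_p\|_{L^{p+1}}$ is a nonnegative minimizer of unit norm.
\item Set $N=\|u_p\|_{L^{p+1}}$. The Nehari identity gives $\int|\nabla u_p|^2=N^{p+1}$, while $Q(u_p)=S_p(\bo G)$ gives $\int|\nabla u_p|^2=S_p(\bo G)N^2$. Comparing, $N^{p-1}=S_p(\bo G)$, that is $N=S_p(\bo G)^{1/(p-1)}$, and therefore $u_p=S_p(\bo G)^{1/(p-1)}v_p$.
\end{enumerate}

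No step is a real obstacle. The only point needing care is the meaning of ``least energy solution'': it should be a critical point of $J_p$ at level $\sigma_p$, so that the Nehari identity is available. That identity is exactly what ties $u_p$ to the maximizing $t$ and lets the converse go through.
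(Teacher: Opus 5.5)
Your proposal is correct and follows essentially the same route as the paper: the fibering computation $\sup_{t>0}J_p(tu)=\frac{p-1}{2(p+1)}Q(u)^{\frac{p+1}{p-1}}$, Proposition~\ref{S_p} for the direct part, and the Nehari identity for the converse. The only cosmetic difference is in the converse: you obtain $Q(u_p)=S_p(\bo G)$ from $t_{u_p}=1$ and injectivity of the power, whereas the paper reads off $\int_{\bo G}|\nabla u_p|^2dx=S_p(\bo G)^{\frac{p+1}{p-1}}$ directly from $J_p(u_p)=\frac{p-1}{2(p+1)}\int_{\bo G}|\nabla u_p|^2dx$.
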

\begin{proof}
Let $u \in H_0^1(\bo{G}) \setminus \{0\}$. Then
\[
J_p(tu) = \frac{t^2}{2}\int_{\bo{G}} |\nabla u|^2 dx - \frac{t^{p+1}}{p+1}\int_{\bo{G}}|u|^{p+1} dx,\qquad t > 0.
\]
Differentiating with respect to $t$, we obtain
\[
\frac{d}{dt}J_p(tu) = t\int_{\bo{G}} |\nabla u|^2 dx - t^p\int_{\bo{G}}|u|^{p+1} dx.
\]
Hence,  $J_p(tu)$ attains its unique maximum at
\[
t_m := \(\frac{\int_{\bo{G}}|\nabla u|^2 dx}{\int_{\bo{G}}|u|^{p+1} dx} \)^{\frac{1}{p-1}}.
\]
Since $t_m^2 \int_{\bo{G}}|\nabla u|^2 dx = t_m^{p+1}\int_{\bo{G}}|u|^{p+1} dx$, it follows that
\begin{align*}
\sup_{t>0}J_p(tu) = J_p(t_m u) &= \(\frac{1}{2} - \frac{1}{p+1} \)t_m^2 \int_{\bo{G}}|\nabla u|^2 dx\\
&= \frac{p-1}{2(p+1)}\(\frac{\int_{\bo{G}}|\nabla u|^2 dx}{\int_{\bo{G}} |u|^{p+1} dx} \)^{\frac{2}{p-1}}\int_{\bo{G}}|\nabla u|^2 dx\\
&= \frac{p-1}{2(p+1)}\left\{\frac{\int_{\bo{G}}|\nabla u|^2 dx}{\(\int_{\bo{G}}|u|^{p+1} dx \)^{\frac{2}{p+1}}} \right\}^{\frac{p+1}{p-1}}.
\end{align*}
Taking the infimum over $u\in H_0^1(\bo{G})\setminus\{0\}$ yields \eqref{sigma_p form}. Finally, Proposition~\ref{S_p} implies that
\[
J_p(u_p) = \sigma_p.
\]

Conversely, let $u_p$ be a least energy solution of \eqref{metric graph}. Since $u_p$ is a critical point of the restriction of $J_p$ to $H_0^1(\bo{G})$, it satisfies
\[
\int_{\bo{G}}|\nabla u_p|^2 dx = \int_{\bo{G}}|u_p|^{p+1} dx.
\]
Hence,
\[
J_p(u_p) = \frac{p-1}{2(p+1)} \int_{\bo{G}}|\nabla u_p|^2 dx.
\]
Since $J_p(u_p) = \sigma_p$, using \eqref{sigma_p form}, we obtain
\[
\int_{\bo{G}}|\nabla u_p|^2 dx = S_p(\bo{G})^{\frac{p+1}{p-1}}.
\]
Moreover,
\[
\int_{\bo{G}}|u_p|^{p+1} dx = S_p(\bo{G})^{\frac{p+1}{p-1}}.
\]
Therefore, setting
\[
v_p:=\frac{u_p}{\|u_p\|_{L^{p+1}(\bo{G})}},
\]
we obtain $\|v_p\|_{L^{p+1}(\bo{G})} = 1$ and
\[
\int_{\bo{G}}|\nabla v_p|^2 dx = S_p(\bo{G}).
\]
Thus, $v_p$ is a nonnegative minimizer of $S_p(\bo{G})$, which completes the proof.
\end{proof}
The explicit formula established in Proposition~\ref{sigma_p} reduces the asymptotic analysis of least energy solutions to the study of the optimal Sobolev constant. We therefore turn to the corresponding limiting variational problem in the next section.

\section{The limiting variational problem}\label{sect;3}
In this section, we study the limiting variational problem related to the optimal Sobolev constant, discussed in Section \ref{sect;2}, as $p \to \infty$. We first introduce the limiting Sobolev constant:
\[
S_\infty(\bo{G}) := \inf_{\substack{u \in H_0^1(\bo{G}),\\ \|u\|_{L^\infty(\bo{G})} = 1}} \int_{\bo{G}}|\nabla u|^2 dx.
\]
\begin{proposition}
\label{S_infty}
The infimum $S_\infty(\bo{G})$ is attained by a nonnegative function $v_\infty \in H_0^1(\bo{G})$.
\end{proposition}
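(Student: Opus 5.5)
We use the direct method, exactly as in the proof of Proposition~\ref{S_p}. The admissible class
\[
\mathcal{K}:=\left\{u\in H_0^1(\bo{G})\ \middle|\ \|u\|_{L^\infty(\bo{G})}=1\right\}
\]
is nonempty. For instance, take a nonzero piecewise linear function supported in a single edge and normalize it. Hence $S_\infty(\bo{G})<\infty$, and trivially $S_\infty(\bo{G})\ge 0$.

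Let $\{u_n\}\subset\mathcal{K}$ be a minimizing sequence. Replacing $u_n$ by $|u_n|$ keeps us in $\mathcal{K}$: on each edge $|u_n|\in H^1$ with $|\nabla|u_n||=|\nabla u_n|$ a.e., and continuity and the Dirichlet condition at $\bo{V}_{end}$ are preserved. The Dirichlet integral is unchanged, so we may assume $u_n\ge 0$.

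The gradients are bounded in $L^2(\bo{G})$. We also have $\|u_n\|_{L^2(\bo{G})}^2\le |\bo{G}|\,\|u_n\|_{L^\infty(\bo{G})}^2=|\bo{G}|$, where $|\bo{G}|=\sum_{\bo{e}}l(\bo{e})<\infty$ by compactness. So $\{u_n\}$ is bounded in $H_0^1(\bo{G})$. By the compact embedding $H_0^1(\bo{G})\hookrightarrow C(\bo{G})$ already used in Proposition~\ref{S_p}, a subsequence satisfies $u_n\weakto v_\infty$ weakly in $H_0^1(\bo{G})$ and $u_n\to v_\infty$ uniformly on $\bo{G}$.

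The one point requiring care is that the constraint passes to the limit. This is exactly where uniform convergence is needed, since weak $H^1$ convergence alone would not control the sup norm. Uniform convergence gives
\[
\|v_\infty\|_{L^\infty(\bo{G})}=\lim_{n\to\infty}\|u_n\|_{L^\infty(\bo{G})}=1,
\]
and $v_\infty\ge 0$ as a uniform limit of nonnegative functions. Also $v_\infty\in H_0^1(\bo{G})$, since this space is weakly closed, or because pointwise convergence at the end vertices preserves $v_\infty(\bo{v})=0$. Hence $v_\infty\in\mathcal{K}$.

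By weak lower semicontinuity of the Dirichlet integral,
\[
\int_{\bo{G}}|\nabla v_\infty|^2dx\le\liminf_{n\to\infty}\int_{\bo{G}}|\nabla u_n|^2dx=S_\infty(\bo{G}).
\]
Combined with $v_\infty\in\mathcal{K}$, this gives $S_\infty(\bo{G})=\int_{\bo{G}}|\nabla v_\infty|^2dx$.

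Finally, $S_\infty(\bo{G})>0$. If it were $0$, then $\nabla v_\infty=0$ on every edge, so $v_\infty$ would be constant on the connected graph $\bo{G}$. Since $\bo{V}_{end}\neq\emptyset$ and $v_\infty$ vanishes there, this constant is $0$, which contradicts $\|v_\infty\|_{L^\infty(\bo{G})}=1$. This positivity is not needed for attainment itself, but it is useful later.
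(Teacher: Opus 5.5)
Your proof is correct and follows the same direct-method argument as the paper: a bounded minimizing sequence, the compact embedding into $C(\bo{G})$ to preserve the $L^\infty$ constraint, and weak lower semicontinuity. You also make explicit two points the paper leaves implicit, namely the reduction to nonnegative minimizing sequences via $u_n\mapsto|u_n|$ and the $H^1$ bound coming from $\|u_n\|_{L^2(\bo{G})}^2\le|\bo{G}|$.
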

The proof is similar to that of Proposition \ref{S_p}. 
\begin{proof}
Let $\{v_n\}$ be a minimizing sequence satisfying $\|v_n\|_{L^\infty(\bo{G})} = 1$.
Since the sequence $\{v_n\}$ is bounded in $H_0^1(\bo{G})$, the compact embedding $H_0^1(\bo{G}) \hookrightarrow C(\bo{G})$ preserves the constraint in the limit. The conclusion follows from the weak lower semicontinuity.
\end{proof}
We next establish a basic identity for the Dirichlet--Kirchhoff Green function, which plays a key role in the sequel.
\begin{proposition}
\label{G(a,a)}
For each $a \in \bo{G} \setminus \bo{V}_{end}$, the Dirichlet--Kirchhoff Green function satisfies
\[
\int_{\bo{G}}|\nabla_x G(x, a)|^2 dx = G(a, a) .
\]
\end{proposition}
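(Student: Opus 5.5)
The plan is to test the defining weak formulation \eqref{eqn;GF} with the Green function itself. Fix $a \in \bo{G}\setminus\bo{V}_{end}$. By definition, $G(\cdot,a)$ is the unique element of $H_0^1(\bo{G})$ satisfying
\[
\int_{\bo{G}} \nabla_x G(x,a)\,\nabla\varphi(x)\,dx = \varphi(a)\qquad\text{for all }\varphi\in H_0^1(\bo{G}).
\]
Since $G(\cdot,a)$ itself belongs to $H_0^1(\bo{G})$, it is an admissible test function. We therefore choose $\varphi = G(\cdot,a)$.

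With this choice the left-hand side becomes $\int_{\bo{G}}|\nabla_x G(x,a)|^2\,dx$, and the right-hand side becomes $G(a,a)$. This is exactly the claimed identity. The argument is the same computation already used in the paper to prove symmetry, specialized to $x=y=a$.

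The only point that needs care is that the evaluation $\varphi(a)$ makes sense. Elements of $H_0^1(\bo{G})$ are continuous by definition; equivalently, the embedding $H_0^1(\bo{G})\hookrightarrow C(\bo{G})$ used in Proposition~\ref{S_p} applies. Hence $G(a,a)$ is a well-defined value of a continuous function. No real obstacle is expected.

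We also get that $G(a,a)\ge 0$, and in fact $G(a,a)>0$: if it were zero, then $\nabla G(\cdot,a)\equiv 0$, which would contradict the defining identity for a test function $\varphi$ with $\varphi(a)\neq 0$.
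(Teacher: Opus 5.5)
Your proof is correct and is exactly the paper's argument: take $\varphi = G(\cdot,a)\in H_0^1(\bo{G})$ in \eqref{eqn;GF}. Your extra remark that $G(a,a)>0$ is also sound. Since $a\notin\bo{V}_{end}$, some test function has $\varphi(a)\neq 0$, and this justifies the positivity the paper later cites from this proposition in Proposition~\ref{M no vertex}.
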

\begin{proof}
Choosing $\varphi = G(\cdot, a)$ in the weak formulation \eqref{eqn;GF} of the Green function, we obtain
\[
\int_{\bo{G}} |\nabla_x G(x, a)|^2 dx = G(a, a),
\]
which ends the proof.
\end{proof}
We next study the behavior of the diagonal Green function 
\[
R(a):= G(a, a)
\]
near an interior vertex. Using the energy identity in Proposition~\ref{G(a,a)}, we compute the directional derivative of $R$ along each incident edge. This will be used to exclude interior vertices from the set $M$.
\begin{lemma}
\label{directional derivative}
Let $\bo{v} \in \bo{V}_{int}$, and let $\bo{e}$ be an edge incident to $\bo{v}$. For $s > 0$, let $a_s \in \bo{e}$ be the point satisfying
\[
dist(a_s, \bo{v}) = s.
\]
Then 
\[
\lim_{s \downarrow 0}\frac{G(a_s,a_s) - G(\bo{v},\bo{v})}{s} = 1 + 2\partial_{\bo{e}}G(\bo{v},\bo{v}),
\]
where $\partial_{\bo{e}}G(\bo{v},\bo{v})$ denotes the outward derivative of $G(\cdot,\bo{v})$ at $\bo{v}$ along $\bo{e}$.
\end{lemma}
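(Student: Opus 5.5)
We compare $G(\cdot,a_s)$ with $G(\cdot,\bo v)$ through the weak formulation \eqref{eqn;GF} and the energy identity of Proposition~\ref{G(a,a)}, without computing either function explicitly.

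\emph{Step 1: reduce the difference quotient to a one-sided derivative.} By symmetry of $G$ and \eqref{eqn;GF} with test function $G(\cdot,a_s)$ or $G(\cdot,\bo v)$, we have
\[
G(a_s,a_s)-G(\bo v,\bo v) = \bigl(G(a_s,a_s)-G(a_s,\bo v)\bigr) + \bigl(G(\bo v,a_s)-G(\bo v,\bo v)\bigr).
\]
Set $g_s:=G(\cdot,a_s)$, $g_0:=G(\cdot,\bo v)$ and $w_s:=g_s-g_0$. The right-hand side equals $w_s(a_s)+w_s(\bo v)$. Testing \eqref{eqn;GF} with $w_s$ for both poles gives
\[
w_s(a_s)-w_s(\bo v)=\int_{\bo G}|\nabla w_s|^2\,dx\ge 0 .
\]
Hence
\[
G(a_s,a_s)-G(\bo v,\bo v)=2w_s(\bo v)+\|\nabla w_s\|_{L^2}^2 .
\]
I will show that each term, divided by $s$, has a limit.

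\emph{Step 2: structure of $w_s$.} The function $w_s$ satisfies $-\Delta w_s=\delta_{a_s}-\delta_{\bo v}$ with Dirichlet--Kirchhoff conditions. Since the Kirchhoff condition at $\bo v$ only fixes the sum of outward derivatives, I will compare with the explicit function
\[
h_s(x):=\begin{cases}\min\{\operatorname{dist}(x,\bo v),\,s\} & x\in\bo e,\\ 0 & \text{otherwise},\end{cases}
\]
which is continuous with $\int\nabla h_s\nabla\varphi=\varphi(\bo v)-\varphi(a_s)$. So $-\Delta h_s=\delta_{\bo v}-\delta_{a_s}$ locally, with the kink of slope $1$ on the segment $[\bo v,a_s]$.

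Then $w_s+h_s$ is harmonic away from the ends, but it does not vanish at the end vertices if $\bo e$ reaches one, so I will instead use $h_s$ only near $\bo v$. Concretely, $w_s$ is a combination with $\nabla w_s=-\nabla h_s+O(s)$ in $L^\infty$. This follows by writing
\[
w_s(x)=G(x,a_s)-G(x,\bo v)=\int_0^s\partial_t G(x,a_t)\,dt .
\]
Since $G(x,\cdot)$ is piecewise linear in the pole on $\bo e$ (for $x$ off the segment), this gives $w_s=s\,\partial_{\bo e}G(\cdot,\bo v)$-type behaviour plus the explicit correction on $[\bo v,a_s]$.

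\emph{Step 3: pass to the limit.} From Step 2, $\|\nabla w_s\|_{L^2}^2=s+O(s^2)$: the slope-$1$ kink on an interval of length $s$ contributes $s$, and the remaining part has gradient $O(s)$ on a set of bounded measure. Next,
\[
\frac{w_s(\bo v)}{s}=\frac{G(\bo v,a_s)-G(\bo v,\bo v)}{s}\to\partial_{\bo e}G(\bo v,\bo v),
\]
because $G(\cdot,\bo v)$ is linear on $\bo e$ near $\bo v$ and, by symmetry, $G(\bo v,a_s)=G(a_s,\bo v)$. Together these give the limit $1+2\partial_{\bo e}G(\bo v,\bo v)$.

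The main obstacle is the rigorous bound $\|\nabla w_s\|_{L^2}^2=s+o(s)$. I would obtain it directly from $\|\nabla w_s\|^2=w_s(a_s)-w_s(\bo v)$. Since $G(\cdot,a_s)$ has a kink of total derivative jump $-1$ at $a_s$ and $G(\cdot,\bo v)$ is linear near $\bo v$ on $\bo e$, uniform Lipschitz bounds on $G$ in both variables give $w_s(a_s)-w_s(\bo v)=s+o(s)$. That reduces everything to the Lipschitz continuity of $G$ in the pole, which follows from the explicit piecewise-linear structure of Green functions on compact graphs.
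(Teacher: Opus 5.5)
The gap is the step you yourself call the main obstacle: the estimate $\|\nabla w_s\|_{L^2}^2=s+o(s)$ is where the constant $1$ in the lemma comes from, and it is never proved. Your Step 1 reproduces the paper's decomposition $G(a_s,a_s)-G(\bo{v},\bo{v})=2w_s(\bo{v})+\|\nabla w_s\|_{L^2}^2$, and your treatment of $w_s(\bo{v})/s\to\partial_{\bo{e}}G(\bo{v},\bo{v})$ also matches the paper.

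Step 2 does not supply the missing estimate. Your $h_s$ is not continuous at the far endpoint of $\bo{e}$ when that vertex is interior: there it equals $s$ on $\bo{e}$ and $0$ on the other edges. Its sign is also reversed, since $\int_{\bo{G}}\nabla h_s\nabla\varphi\,dx=\int_0^s\varphi'\,dt=\varphi(a_s)-\varphi(\bo{v})$. This is the same functional as for $w_s$, so it is $w_s-h_s$, not $w_s+h_s$, that is harmonic near $\bo{v}$. The claim $\nabla w_s=-\nabla h_s+O(s)$ in $L^\infty$, and the representation through $\partial_tG(\cdot,a_t)$, would need bounds on mixed derivatives of $G$ that you never derive.

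Your closing argument appeals to ``uniform Lipschitz bounds on $G$ in both variables''. These are asserted, not proved. Even granting them, a Lipschitz bound by itself only gives $w_s(a_s)-w_s(\bo{v})=O(s)$, not the coefficient $1$. To get that coefficient you must also show that the slope of $G(\cdot,a_s)$ on the part of $\bo{e}$ beyond $a_s$ converges to the slope of $G(\cdot,\bo{v})$ there, and then use the unit jump at $a_s$. This step is missing from your proposal.

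The paper closes the gap with two Cauchy--Schwarz estimates. Both use only $\int_{\bo{G}}\nabla w_s\nabla\varphi\,dx=\varphi(a_s)-\varphi(\bo{v})$ for $\varphi\in H_0^1(\bo{G})$.

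\emph{Upper bound.} $\|\nabla w_s\|_{L^2}^2=w_s(a_s)-w_s(\bo{v})=\int_0^s w_s'\,dt\le s^{1/2}\|\nabla w_s\|_{L^2}$, so $\|\nabla w_s\|_{L^2}^2\le s$.

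\emph{Lower bound.} Test with $\varphi_s$ equal to $t$ on $[0,s]\subset\bo{e}$, decreasing linearly to $0$ at $t=\delta$, and zero elsewhere. Since $\|\nabla\varphi_s\|_{L^2}^2=s\delta/(\delta-s)$, the inequality $s\le\|\nabla w_s\|_{L^2}\|\nabla\varphi_s\|_{L^2}$ gives $\|\nabla w_s\|_{L^2}^2\ge s-s^2/\delta$.

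Alternatively, your kink idea becomes rigorous once you have the upper bound. Beyond $a_s$ on $\bo{e}$ both Green functions are affine, so their slopes there differ by at most $(s/(l(\bo{e})-s))^{1/2}$. The unit jump then forces $w_s'=1+O(s^{1/2})$ on $[\bo{v},a_s]$, whence $\|\nabla w_s\|_{L^2}^2=w_s(a_s)-w_s(\bo{v})=s+O(s^{3/2})$.
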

\begin{proof}
For simplicity, we introduce the norm on $H_0^1(\bo{G})$ by
\[
\|u\|_{H_0^1(\bo{G})} := \( \int_{\bo{G}}|\nabla u|^2 dx \)^{1/2}.
\]
By the Poincar\'e inequality, this norm is equivalent to the usual $H^1$-norm on $H_0^1(\bo{G})$. Let
\[
g_s := G(\cdot,a_s),\qquad g_0 := G(\cdot,\bo{v}),\qquad h_s := g_s - g_0.
\]
By Proposition~\ref{G(a,a)},
\[
G(a,a) = \|G(\cdot,a)\|_{H_0^1(\bo{G})}^2.
\]
Hence
\begin{equation}
\label{eqn;G.F. and h_s}
\begin{aligned}
G(a_s,a_s) - G(\bo{v},\bo{v}) &= \|g_s\|_{H_0^1(\bo{G})}^2 - \|g_0\|_{H_0^1(\bo{G})}^2\\
&= 2\langle g_0, h_s\rangle_{H_0^1(\bo{G})} + \|h_s\|_{H_0^1(\bo{G})}^2.
\end{aligned}
\end{equation}
Using the weak formulation of the Green function \eqref{eqn;GF} with $\varphi = h_s$, we obtain
\[
\langle g_0, h_s\rangle_{H_0^1(\bo{G})} = h_s(\bo{v}) = G(a_s,\bo{v}) - G(\bo{v},\bo{v}),
\]
where we used the symmetry of the Green function. Therefore, \eqref{eqn;G.F. and h_s} gives
\begin{equation}
\label{eqn;G.F. expansion}
G(a_s,a_s)-G(\bo{v},\bo{v}) = 2\(G(a_s,\bo{v})-G(\bo{v},\bo{v})\) +\|h_s\|_{H_0^1(\bo{G})}^2.
\end{equation}

Since the pole of $G(\cdot,\bo{v})$ is located in $\bo{v}$, $G(\cdot,\bo{v})$ satisfies
\[
-\frac{d^2}{dt^2}G(t,\bo{v}) = 0
\]
on the interior of the edge $\bo{e}$. Hence $G(\cdot,\bo{v})$ has constant derivative along $\bo{e}$, and
\begin{equation}
\label{eqn;G.F. derivative}
G(a_s,\bo{v})-G(\bo{v},\bo{v}) = s\,\partial_{\bo{e}}G(\bo{v},\bo{v}).
\end{equation}

It remains to estimate $\|h_s\|^2_{H_0^1(\bo{G})}$. By the weak formulation of the Green function \eqref{eqn;GF}, for each $\varphi \in H_0^1(\bo{G})$,
\[
\langle g_s,\varphi\rangle_{H_0^1(\bo{G})} = \varphi(a_s)
\]
and
\[
\langle g_0,\varphi\rangle_{H_0^1(\bo{G})} = \varphi(\bo{v}).
\]
Since $h_s = g_s - g_0$, it follows that
\begin{equation}
\label{eqn;inner product}
\langle h_s,\varphi\rangle_{H_0^1(\bo{G})} = \varphi(a_s) - \varphi(\bo{v}).
\end{equation}
Taking $\varphi=h_s$ in \eqref{eqn;inner product}, we have
\[
\|h_s\|_{H_0^1(\bo{G})}^2 = h_s(a_s) - h_s(\bo{v}).
\]
Since $a_s$ and $\bo{v}$ are connected by an edge segment of length $s$, the fundamental theorem of calculus and the Cauchy--Schwarz inequality give
\[
\|h_s\|_{H_0^1(\bo{G})}^2 = \left| \int_0^s h_s'(t)\,dt \right| \leq s^{1/2} \left(\int_0^s|h_s'(t)|^2\,dt\right)^{1/2} \leq s^{1/2}\|h_s\|_{H_0^1(\bo{G})}.
\]
Thus,
\begin{equation}
\label{eqn;h_s U.B.}
\|h_s\|_{H_0^1(\bo{G})}^2 \leq s.
\end{equation}

For the reverse inequality, fix $0<\delta<|\bo{e}|$ and define $\varphi_s\in H_0^1(\bo{G})$ by
\[
\varphi_s(t)=
\begin{cases}
t, & 0\le t \leq s,\\
\displaystyle s\frac{\delta-t}{\delta-s}, & s \leq t \leq \delta,\\
0, & \delta \geq t,
\end{cases}
\]
on $\bo{e}$, and $\varphi_s=0$ on all other edges. Then
\[
\varphi_s(a_s) - \varphi_s(\bo{v}) = s
\]
and
\[
\|\varphi_s\|_{H_0^1(\bo{G})}^2 = s + \frac{s^2}{\delta-s} = \frac{s\delta}{\delta-s}.
\]
Taking $\varphi=\varphi_s$ in \eqref{eqn;inner product} and using the Cauchy--Schwarz inequality, we obtain
\[
s = \langle h_s,\varphi_s\rangle_{H_0^1(\bo{G})} \leq \|h_s\|_{H_0^1(\bo{G})} \|\varphi_s\|_{H_0^1(\bo{G})}.
\]
Therefore,
\begin{equation}
\label{eqn;h_s L.B.}
\|h_s\|_{H_0^1(\bo{G})}^2 \geq \frac{s^2}{\|\varphi_s\|_{H_0^1(\bo{G})}^2} = s-\frac{s^2}{\delta}.
\end{equation}

Combining \eqref{eqn;h_s U.B.} and \eqref{eqn;h_s L.B.}, we obtain
\[
s-\frac{s^2}{\delta} \leq \|h_s\|_{H_0^1(\bo{G})}^2 \leq s,
\]
and hence
\begin{equation}
\label{eqn;h_s asymp}
\|h_s\|_{H_0^1(\bo{G})}^2 = s+O(s^2) \qquad \mbox{as } s \downarrow 0.
\end{equation}

Finally, substituting \eqref{eqn;G.F. derivative} and \eqref{eqn;h_s asymp} into \eqref{eqn;G.F. expansion}, we obtain
\[
G(a_s,a_s) - G(\bo{v},\bo{v}) = s\(1 + 2\partial_{\bo{e}}G(\bo{v},\bo{v})\) + O(s^2).
\]
Dividing by $s$ and letting $s \downarrow 0$ yields
\[
\lim_{s \downarrow 0} \frac{G(a_s,a_s)-G(\bo{v},\bo{v})}{s} = 1+2\partial_{\bo{e}}G(\bo{v},\bo{v}).
\]
This completes the proof.
\end{proof}
Using the directional derivative formula established in Lemma~\ref{directional derivative}, we show that the diagonal Green function cannot attain its maximum at a vertex.
\begin{proposition}
\label{M no vertex}
The set $M$ satisfies
\[
M\subset \bo{G}\setminus\bo{V}.
\]
\end{proposition}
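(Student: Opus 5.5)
The plan is to treat end vertices and interior vertices separately.

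\emph{End vertices.} For $\bo{v}\in\bo{V}_{end}$ I would use the Green function representation. For $a$ near $\bo{v}$, $G(a,a)=\|G(\cdot,a)\|_{H_0^1(\bo{G})}^2$ by Proposition~\ref{G(a,a)}, and $G(a,a)\to 0$ as $a\to\bo{v}$. To see this, take $\varphi=G(\cdot,a)$ in \eqref{eqn;GF}, and use that $\varphi$ vanishes at $\bo{v}$ together with the estimate $|\varphi(a)|\le dist(a,\bo{v})^{1/2}\|\varphi\|_{H_0^1}$. This gives $G(a,a)\le dist(a,\bo{v})$. Hence $G(\cdot,\cdot)$ extends continuously by $0$ to the diagonal at end vertices. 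Meanwhile $G(x,x)>0$ at any $x\notin\bo{V}_{end}$ (for instance $G(x,x)=\|G(\cdot,x)\|^2>0$, since $G(\cdot,x)\neq0$ because $\varphi(x)\ne0$ for some test function). So no end vertex is a maximizer.

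\emph{Interior vertices.} Let $\bo{v}\in\bo{V}_{int}$. Summing the Kirchhoff condition for $g_0=G(\cdot,\bo{v})$ at its pole is the key step. Testing \eqref{eqn;GF} with a $\varphi$ supported near $\bo{v}$, and using that $g_0$ is affine on each edge, gives
\[
\sum_{\bo{e}\succ\bo{v}}\partial_{\bo{e}}G(\bo{v},\bo{v})=-1.
\]
The sign convention must be checked: with $\partial_{\bo e}$ the derivative in the direction moving away from $\bo{v}$ along $\bo{e}$ (as used in \eqref{eqn;G.F. derivative}), integration by parts gives $-\sum_{\bo e}\partial_{\bo{e}}g_0(\bo v)\varphi(\bo v)=\varphi(\bo v)$.

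Since $\deg\bo{v}\ge3$, the average of $\partial_{\bo{e}}G(\bo{v},\bo{v})$ over incident edges is $-1/\deg\bo{v}\ge -1/3>-1/2$. Therefore some edge $\bo{e}$ has $\partial_{\bo{e}}G(\bo{v},\bo{v})\ge -1/\deg\bo{v}>-1/2$. For that edge, Lemma~\ref{directional derivative} yields
\[
\lim_{s\downarrow0}\frac{G(a_s,a_s)-G(\bo{v},\bo{v})}{s}=1+2\partial_{\bo{e}}G(\bo{v},\bo{v})\ge 1-\frac{2}{\deg\bo{v}}>0.
\]
So $G(a_s,a_s)>G(\bo{v},\bo{v})$ for small $s>0$, and $\bo{v}\notin M$.

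\emph{Remaining points.} Note that $M$ is nonempty. This follows from continuity of $x\mapsto G(x,x)$ on the compact $\bo{G}$, extended by $0$ at end vertices. Continuity itself follows from the estimate $\|h\|^2\le dist$ as in the proof of Lemma~\ref{directional derivative}, applied between nearby points.

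\emph{Main obstacle.} The only delicate point is the flux identity and its sign at the pole. I would verify it by taking $\varphi$ equal to $1$ at $\bo{v}$ and decaying linearly on each incident edge over length $\delta$. Then \eqref{eqn;GF} gives $\sum_{\bo e}\int_0^\delta \partial_{\bo e}g_0\cdot(-1/\delta)\,dt=1$, that is, $\sum_{\bo e}\partial_{\bo e}g_0(\bo v)=-1$, as claimed.
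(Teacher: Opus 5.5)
Your proof is correct and follows the paper's argument. End vertices are excluded because the diagonal Green function vanishes there but is positive elsewhere. Interior vertices are excluded by the flux identity $\sum_{\bo{e}\succ\bo{v}}\partial_{\bo{e}}G(\bo{v},\bo{v})=-1$ together with Lemma~\ref{directional derivative} and $\deg\bo{v}\ge 3$; your averaging step is equivalent to the paper's summation giving $d-2>0$, and your estimate $G(a,a)\le dist(a,\bo{v})$ and tent-function sign check make explicit what the paper leaves implicit.
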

\begin{proof}
First, let $\bo{v} \in \bo{V}_{end}$. By the Dirichlet boundary condition,
\[
G(\bo{v},\bo{v})=0.
\]
On the other hand, $G(a,a)>0$ for every $a \in \bo{G} \setminus \bo{V}_{end}$ by Proposition~\ref{G(a,a)}. Hence,
\[
\bo{v}\notin M.
\]

We next consider $\bo{v} \in \bo{V}_{int}$. Let
\[
\bo{e}_1,\ldots,\bo{e}_d
\]
be the edges incident to $\bo{v}$, where $d=\deg \bo{v} \geq 3$. We first claim that
\begin{equation}
\label{eqn;Green pole}
\sum_{j = 1}^d \partial_{\bo{e}_j}G(\bo{v},\bo{v}) =-1.
\end{equation}
Indeed, since $G(\cdot,\bo{v})$ satisfies
\[
-\Delta G(\cdot,\bo{v}) = \delta_{\bo{v}},
\]
the weak formulation \eqref{eqn;GF} gives
\[
\int_{\bo{G}}\nabla_xG(x,\bo{v})\nabla\varphi(x) dx = \varphi(\bo{v}) \qquad \mbox{for all } \varphi\in H_0^1(\bo{G}).
\]
Integrating by parts on each edge incident to $\bo{v}$ and using the Kirchhoff conditions at all other interior vertices, we obtain
\[
-\( \sum_{j = 1}^d \partial_{\bo{e}_j}G(\bo{v},\bo{v}) \)\varphi(\bo{v}) = \varphi(\bo{v}),
\]
which yields \eqref{eqn;Green pole}.

For each $j=1,\ldots,d$, let $a_{j, s}\in\bo{e}_j$ satisfy
\[
dist(a_{j,s},\bo{v}) = s.
\]
By Lemma~\ref{directional derivative},
\[
\lim_{s \downarrow 0}\frac{R(a_{j,s}) - R(\bo{v})}{s} = 1 + 2\partial_{\bo{e}_j}G(\bo{v},\bo{v}).
\]
Therefore, summing over $j=1,\ldots,d$ and using \eqref{eqn;Green pole}, we obtain
\begin{align*}
\sum_{j=1}^d \lim_{s \downarrow 0}\frac{R(a_{j,s})-R(\bo{v})}{s}
&= d + 2\sum_{j=1}^d \partial_{\bo{e}_j}G(\bo{v},\bo{v})\\
&= d-2.
\end{align*}
Since $d\ge3$, we have $d - 2 > 0$. Hence, there exists an incident edge $\bo{e}_{j_0}$ such that
\[
\lim_{s \downarrow 0} \frac{R(a_{j_0,s})-R(\bo{v})}{s} >0.
\]
Thus, for sufficiently small $s>0$, $R(a_{j_0,s})>R(\bo{v})$, and consequently
\[
\bo{v}\notin M.
\]

Since every vertex belongs to either $\bo{V}_{end}$ or $\bo{V}_{int}$, we conclude that
\[
M \subset \bo{G} \setminus \bo{V}.
\]
This completes the proof.
\end{proof}
We next establish a maximum property of the Dirichlet--Kirchhoff Green function, which will also be used to study the location of the maximum points of the least energy solutions.
\begin{proposition}\label{maximum; GF}
Let $a \in M$. Then,
\[
G(x, a) \leq G(a, a) \qquad \mbox{for each}\ x \in \bo{G}.
\]
Moreover, equality holds if and only if $x = a$. In particular, $G(\cdot, a)$ attains its maximum uniquely at $a$.
\end{proposition}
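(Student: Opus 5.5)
Since $M\subset \bo{G}\setminus\bo{V}$ by Proposition~\ref{M no vertex}, the point $a$ lies in the interior of some edge $\bo{e}$. Off the pole, $G(\cdot,a)$ is harmonic on each edge, i.e.\ affine, and it satisfies the Kirchhoff conditions at interior vertices and vanishes at end vertices. We also know that $G(\cdot,a)\ge 0$, and in fact $G(\cdot,a)>0$ off $\bo{V}_{end}$. This follows by testing \eqref{eqn;GF} with $\varphi=G(\cdot,a)^-$, which gives $-\|G(\cdot,a)^-\|^2=\varphi(a)\ge0$.

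The first step is the inequality $G(x,a)\le G(a,a)$. On $\bo{G}\setminus\{a\}$ the function is harmonic, so it satisfies a maximum principle. At an interior vertex, the Kirchhoff condition says the outward derivatives sum to zero. Hence either all of them vanish, in which case the function is constant near the vertex, or some outward derivative is positive, in which case the vertex cannot be a strict local maximum. In addition, $G(\cdot,a)$ is affine on every edge. Therefore the maximum over $\bo{G}\setminus\{a\}$ is attained at $a$ or at end vertices, where $G=0$.

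I would make this rigorous with a short argument. Let $m=\max G(\cdot,a)$ and suppose $m>G(a,a)$. The set $\{G=m\}$ is closed and avoids $a$ and $\bo{V}_{end}$. On each edge the function is affine, so the maximum is attained at an endpoint, or the function is constant on the edge. Take a maximizing vertex $\bo{v}$. All outward derivatives at $\bo{v}$ are $\le 0$ (directional derivatives into edges must be $\le0$), and they sum to zero. Hence all of them vanish, and $G\equiv m$ on each edge incident to $\bo{v}$, except possibly on the piece of the edge $\bo{e}$ containing $a$ that lies between $\bo{v}$ and $a$; there the function is still affine, so it is constant up to $a$. Propagating through the connected graph, we eventually reach $a$ or an end vertex, a contradiction since $G(a,a)<m$ and $G=0$ at end vertices. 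This gives $G(\cdot,a)\le G(a,a)$. The first part does not use $a\in M$.

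The real content is the strict inequality, and this is where $a\in M$ is needed. The propagation argument above already shows that if $G(x,a)=G(a,a)$ for some $x\ne a$, then $G(\cdot,a)$ is constant, equal to $G(a,a)$, on a whole edge segment emanating from $a$ on one side. More precisely, on one of the two sides of $a$ in $\bo{e}$ the derivative vanishes. I then want to contradict maximality of $R(a)=G(a,a)$. I would mimic Lemma~\ref{directional derivative} at an interior point $a$. Moving the pole to $a_s$ at distance $s$ along the side of $\bo{e}$ where the slope is zero, the same computation yields
\[
R(a_s)-R(a)=2\big(G(a_s,a)-G(a,a)\big)+\|h_s\|^2_{H_0^1(\bo{G})} = 0+ s+O(s^2).
\]
Here the lower bound $\|h_s\|^2\ge s-s^2/\delta$ is proved exactly as in that lemma, using a test function supported in the edge $\bo{e}$ near $a$. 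Hence $R(a_s)>R(a)$ for small $s$, contradicting $a\in M$.

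The main obstacle is the careful propagation step, namely showing that equality at some $x\ne a$ forces a zero one-sided slope of $G(\cdot,a)$ at $a$. I would handle it by following a path from $x$ toward $a$. Along this path the function is bounded by $G(a,a)$ and affine on each edge, and at each vertex the Kirchhoff condition combined with maximality forces the function to be constant. Therefore the function stays equal to $G(a,a)$ all the way to $a$ along the final segment, which gives the zero slope there.
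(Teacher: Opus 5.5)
Your proof is correct, but it takes a different route from the paper.

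The paper argues purely in the Hilbert space $H_0^1(\bo G)$. By the reproducing property, $G(x,a)=\int_{\bo G}\nabla_y G(y,a)\nabla_y G(y,x)\,dy$, so Cauchy--Schwarz and Proposition~\ref{G(a,a)} give $G(x,a)^2\le G(a,a)G(x,x)\le G(a,a)^2$. Here $a\in M$ is used already for the non-strict inequality. In the equality case one gets $G(x,x)=G(a,a)$, hence $\|G(\cdot,x)-G(\cdot,a)\|_{H_0^1(\bo G)}^2=G(x,x)-2G(x,a)+G(a,a)=0$, so $\varphi(x)=\varphi(a)$ for all test functions and $x=a$. That argument takes a few lines and uses none of the piecewise-affine structure, the Kirchhoff conditions, or Proposition~\ref{M no vertex}.

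Your approach separates the two roles of the hypothesis. The maximum principle gives $G(\cdot,a)\le G(a,a)$ for every pole $a$, which is more than the Cauchy--Schwarz bound yields for general $a$. The hypothesis $a\in M$ enters only to rule out a flat side, through the identity $R(a_s)-R(a)=2\bigl(G(a_s,a)-G(a,a)\bigr)+\|h_s\|^2_{H_0^1(\bo G)}$. This makes clear why the hypothesis is needed: for a general pole, $G(\cdot,a)\equiv G(a,a)$ on every component of $\bo G\setminus\{a\}$ that contains no end vertex.

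The cost is the propagation step, which is cleanest as a connectedness argument. The set $\{G(\cdot,a)=G(a,a)\}\setminus\{a\}$ is open: on edges the function is affine with an interior maximum, and at interior vertices all outward derivatives vanish by Kirchhoff and maximality. It is also relatively closed in $\bo G\setminus\{a\}$. So it contains the component of $\bo G\setminus\{a\}$ through $x$. The closure of that component contains $a$, so the component contains a half-segment of $\bo e$ at $a$.

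Two small simplifications are available.
\begin{itemize}
\item Once $G(a_s,a)=G(a,a)$, you have $R(a_s)-R(a)=\|h_s\|^2_{H_0^1(\bo G)}>0$ directly, since $h_s\neq0$ because $H_0^1(\bo G)$ separates points. The bound $\|h_s\|^2\ge s-s^2/\delta$ is therefore not needed.
\item Testing with $G(\cdot,a)^-$ only yields $G(\cdot,a)\ge0$, not strict positivity. You never use strict positivity, though: what you need is $G(a,a)>0$, which comes from Proposition~\ref{G(a,a)}.
\end{itemize}
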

\begin{proof}
Let $a \in M$ and $x \in \bo{G}\setminus \bo{V}_{end}$. By the weak formulation \eqref{eqn;GF} and the symmetry of the Green function, we have
\[
G(x,a) = \int_{\bo{G}}\nabla_yG(y,a)\nabla_yG(y,x)dy.
\]
Thus, by the Cauchy--Schwarz inequality and Proposition~\ref{G(a,a)}, we obtain
\begin{align*}
|G(x,a)|^2
&\leq
\left(\int_{\bo{G}}|\nabla_yG(y,a)|^2dy\right)
\left(\int_{\bo{G}}|\nabla_yG(y,x)|^2dy\right)\\
&=G(a,a)G(x,x).
\end{align*}
Since $a \in M$, we have
\[
G(x,x) \leq G(a,a).
\]
Therefore,
\[
|G(x,a)| \leq G(a,a),
\]
and hence
\[
G(x,a) \leq G(a,a).
\]
If $x \in \bo{V}_{end}$, then $G(x,a)=0$ by the Dirichlet boundary condition, and hence the same inequality holds.

We next prove the uniqueness of the maximum point. Suppose that
\[
G(x,a)=G(a,a)
\]
for some $x \in \bo{G}$. Then $x \notin \bo{V}_{end}$, and all the above inequalities must be equalities. In particular,
\[
G(x,x)=G(a,a),
\]
and equality holds in the Cauchy--Schwarz inequality. Hence, $\nabla G(\cdot,x)$ and $\nabla G(\cdot,a)$ are linearly dependent. Moreover,
\[
\|\nabla G(\cdot,x)\|_{L^2(\bo{G})} = \|\nabla G(\cdot,a)\|_{L^2(\bo{G})},
\]
and
\[
\int_{\bo{G}}\nabla_yG(y,a)\nabla_yG(y,x)dy = G(a,a).
\]
Therefore,
\[
\nabla G(\cdot,x)=\nabla G(\cdot,a)
\]
almost everywhere on $\bo{G}$. Since $G(\cdot,x), G(\cdot,a) \in H_0^1(\bo{G})$, it follows that
\[
G(\cdot,x)=G(\cdot,a).
\]
By the weak formulation \eqref{eqn;GF}, for every $\varphi \in H_0^1(\bo{G})$,
\begin{align*}
\varphi(x) &=\int_{\bo{G}}\nabla_yG(y,x)\nabla\varphi(y)dy\\
&=\int_{\bo{G}}\nabla_yG(y,a)\nabla\varphi(y)dy\\
&=\varphi(a).
\end{align*}
Since $H_0^1(\bo{G})$ separates distinct points of $\bo{G}\setminus\bo{V}_{end}$, we conclude that $x=a$. Therefore,
\[
G(x,a)<G(a,a)\qquad \mbox{for every}\ x \in \bo{G}\setminus\{a\}.
\]
This completes the proof.
\end{proof}
We now use the Dirichlet--Kirchhoff Green function to construct a candidate minimizer for the limiting variational problem.
For each $a_*\in M$, define
\begin{equation}
\label{w_a}
w_{a_*}(x):=
\frac{G(x,a_*)}{G(a_*,a_*)}.
\end{equation}
By Proposition~\ref{maximum; GF},
\[
\|w_{a_*}\|_{L^\infty(\bo G)}=1,
\]
and $w_{a_*}$ attains its maximum uniquely at $a_*$.
Moreover, Proposition~\ref{G(a,a)} gives
\[
\int_{\bo G}|\nabla w_{a_*}(x)|^2\,dx = \frac1{G(a_*,a_*)}.
\]
The following proposition shows that this candidate is indeed optimal.
\begin{proposition}
\label{limiting problem}
Let $a_* \in M$. Then
\[
S_\infty(\bo{G}) = \frac{1}{G(a_*,a_*)}.
\]
Moreover $w_{a_*}(x)$ defined in \eqref{w_a} is a minimizer of $S_\infty(\bo{G})$. Conversely, every nonnegative minimizer of $S_\infty(\bo{G})$ is of this form for some $a^*\in\mathcal{M}$.
\end{proposition}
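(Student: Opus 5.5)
The argument is the standard reproducing-kernel estimate $|u(a)|^2 \le G(a,a)\,\|u\|^2_{H_0^1(\bo G)}$, followed by an analysis of the equality cases. Let $u\in H_0^1(\bo G)$ with $\|u\|_{L^\infty(\bo G)}=1$. Since $\bo G$ is compact, $u$ is continuous, and the Dirichlet condition gives $u=0$ on $\bo V_{end}$. Hence there is a point $a\in\bo G\setminus\bo V_{end}$ with $|u(a)|=1$. Applying the weak formulation \eqref{eqn;GF} with $\varphi=u$ and then the Cauchy--Schwarz inequality, I get
\[
1=|u(a)|^2=\Big|\int_{\bo G}\nabla_xG(x,a)\nabla u\,dx\Big|^2\le G(a,a)\int_{\bo G}|\nabla u|^2dx,
\]
where the last step uses Proposition~\ref{G(a,a)}. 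Since $G(a,a)\le G(a_*,a_*)$ for $a_*\in M$, this yields $\int_{\bo G}|\nabla u|^2\,dx\ge 1/G(a_*,a_*)$, and therefore $S_\infty(\bo G)\ge 1/G(a_*,a_*)$. For the reverse inequality I use the function $w_{a_*}$ from \eqref{w_a}. It is admissible, because $\|w_{a_*}\|_{L^\infty(\bo G)}=1$ by Proposition~\ref{maximum; GF}, and its energy is exactly $1/G(a_*,a_*)$. This proves equality and shows that $w_{a_*}$ is a minimizer.

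For the converse, let $v$ be a nonnegative minimizer and pick $a$ with $v(a)=1$; as before, $a\notin\bo V_{end}$. Every inequality in the chain must then be an equality. First, $G(a,a)=G(a_*,a_*)$, so $a\in M$. Second, Cauchy--Schwarz is an equality, so $\nabla v=\lambda\nabla G(\cdot,a)$ almost everywhere for some constant $\lambda$. Because both $v$ and $G(\cdot,a)$ lie in $H_0^1(\bo G)$, this gives $v=\lambda G(\cdot,a)$. Evaluating at $a$ gives $\lambda=1/G(a,a)$, so $v=w_a$ with $a\in M$.

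The argument is mostly routine. The one point that needs care is the equality case in Cauchy--Schwarz. The vector $\nabla G(\cdot,a)$ is nonzero, since its squared $L^2$ norm is $G(a,a)>0$, so proportionality indeed holds with a scalar $\lambda$. Passing from equal gradients to equal functions requires that a function in $H_0^1(\bo G)$ with zero gradient vanishes. This holds because $\bo G$ is connected and $\bo V_{end}\neq\emptyset$. I would also remark that the nonnegativity assumption is only used to guarantee that the maximum modulus is attained as $v(a)=1$ rather than $-1$. For a general minimizer, the same argument gives $\pm w_a$.
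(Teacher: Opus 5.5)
Your proposal is correct and follows the paper's proof essentially step for step. The lower bound comes from the reproducing identity \eqref{eqn;GF}, Cauchy--Schwarz and Proposition~\ref{G(a,a)} together with $G(a,a)\le G(a_*,a_*)$; the upper bound uses the admissible function $w_{a_*}$; and the converse comes from the equality cases in that same chain. The details you add are the ones the paper leaves implicit: that the point where $|u|=1$ lies off $\bo{V}_{end}$, so $G(\cdot,a)$ is defined, and that equal gradients force equal functions in $H_0^1(\bo{G})$ by connectedness and $\bo{V}_{end}\neq\emptyset$.
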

\begin{proof}
Let $a_* \in M$. Recall that the normalized Green function
\[
w_{a_*}(x) := \frac{G(x,a_*)}{G(a_*,a_*)}
\]
satisfies  $\|w_{a_*}\|_{L^\infty(\bo{G})} = 1$ and 
\[
\int_{\bo{G}} |\nabla w_{a_*}(x)|^2 dx = \frac{1}{G(a_*,a_*)}.
\]
Hence, $w_{a_*}$ is admissible in the minimizing problem defining $S_\infty(\bo{G})$, and therefore
\begin{equation}
S_\infty(\bo{G}) \leq \frac{1}{G(a_*, a_*)}.
\label{eqn;upper}
\end{equation}
We next prove the reverse inequality. Let $u \in H_0^1(\bo{G})$ satisfy $\|u\|_{L^\infty(\bo{G})} = 1$. Since $\bo{G}$ is compact and $u$ is continuous on $\bo{G}$, there exists $b \in \bo{G}$ such that
\[
|u(b)| = 1.
\]
By the weak formulation \eqref{eqn;GF} of the Green function,
\[
u(b) = \int_{\bo{G}}\nabla_x G(x,b) \nabla u(x) dx.
\]
Thus, by the Cauchy--Schwarz inequality,
\[
1 = |u(b)|^2 \leq \(\int_{\bo{G}}|\nabla_x G(x,b)|^2 dx \)\(\int_{\bo{G}}|\nabla u|^2 dx \)
\]
Using Proposition~\ref{G(a,a)}, we obtain
\[
1 \leq G(b,b)\int_{\bo{G}}|\nabla u|^2 dx.
\]
Since $a^*\in\mathcal{M}$, we have $G(b, b) \leq G(a^*, a^*)$.
Therefore,
\[
\frac{1}{G(a^*,a^*)} \leq \frac{1}{G(b, b)} \leq \int_{\bo{G}}|\nabla u|^2 dx.
\]
Taking the infimum over all admissible functions $u$, we obtain
\begin{equation}
\label{eqn;lower}
\frac{1}{G(a^*,a^*)} \leq S_\infty(\bo{G}).
\end{equation}
Combining the upper \eqref{eqn;upper} and lower \eqref{eqn;lower} estimates gives
\[
S_\infty(\bo{G}) = \frac{1}{G(a_*,a_*)}.
\]
Since equality is attained by $w_{a_*}$, it follows that $w_{a_*}$ is a minimizer of $S_\infty(\bo G)$.

Conversely, let $v_\infty$ be a nonnegative minimizer of $S_\infty(\bo{G})$. Since $\|v_\infty\|_{L^\infty(\bo{G})} = 1$, there exists $b \in \bo{G}$ such that
\[
v_\infty(b) = 1.
\]
By the weak formulation of the Green function and the Cauchy--Schwarz inequality,
\[
1 = v_\infty(b)^2 \leq G(b,b)\int_{\bo{G}}|\nabla v_\infty|^2 dx.
\]
Since $v_\infty$ is a minimizer and $a^* \in \mathcal{M}$, we have
\[
\int_{\bo{G}}|\nabla v_\infty|^2 dx = S_\infty(\bo{G}) = \frac{1}{G(a^*,a^*)}.
\]
Hence,
\[
1 \leq \frac{G(b,b)}{G(a^*,a^*)} \leq 1.
\]
Therefore,
\[
G(b,b) = G(a^*,a^*),
\]
and hence $b\in\mathcal{M}$. Moreover, equality holds in the Cauchy--Schwarz inequality. Thus,
\[
v_\infty = \frac{G(\cdot,b)}{G(b,b)}.
\]
\end{proof}
We point out that $S_\infty(\bo{G})$ is also the best constant in the Sobolev inequality
\[
S_\infty(\bo{G}) \|u\|_{L^\infty(\bo{G})}^2 \leq \int_{\bo{G}} |\nabla u|^2 dx, \qquad u \in H_0^1(\bo{G}).
\]

\section{Convergence of the optimal Sobolev constants}\label{sect;4}
In this section, we establish the convergence of the optimal Sobolev constants and the asymptotic behavior of their normalized minimizers. These results provide the key ingredients for the proof of Theorem~\ref{main}.
\begin{proposition}
\label{S_p to S_infty}
As $p \to \infty$, one has
\begin{equation}
\label{limit}
S_p(\bo{G}) \to S_\infty(\bo{G}).
\end{equation}
\end{proposition}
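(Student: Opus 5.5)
We prove two inequalities, an upper bound on $\limsup_{p\to\infty} S_p(\bo G)$ and a lower bound on $\liminf_{p\to\infty} S_p(\bo G)$. The only feature of the limit we use is that, on a compact graph, $L^{p+1}$ norms converge to the $L^\infty$ norm. Write $|\bo G|=\sum_{\bo e}l(\bo e)<\infty$ for the total length.

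\emph{Upper bound.} Fix a minimizer $w=w_{a_*}$ of $S_\infty(\bo G)$, which exists by Proposition~\ref{limiting problem}. Use it as a test function in the quotient defining $S_p(\bo G)$:
\[
S_p(\bo G)\le \frac{\int_{\bo G}|\nabla w|^2dx}{\|w\|_{L^{p+1}(\bo G)}^2}.
\]
The function $w$ is continuous with $\|w\|_{L^\infty(\bo G)}=1$ on a set of finite measure, so $\|w\|_{L^{p+1}(\bo G)}\to 1$ as $p\to\infty$. This gives
\[
\limsup_{p\to\infty}S_p(\bo G)\le S_\infty(\bo G).
\]

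\emph{Lower bound.} Let $u\in H_0^1(\bo G)\setminus\{0\}$. By H\"older's inequality on the finite-measure space $\bo G$,
\[
\|u\|_{L^{p+1}(\bo G)}\le |\bo G|^{\frac1{p+1}}\|u\|_{L^\infty(\bo G)}.
\]
By the Sobolev inequality noted after Proposition~\ref{limiting problem},
\[
\int_{\bo G}|\nabla u|^2dx\ \ge\ S_\infty(\bo G)\|u\|_{L^\infty(\bo G)}^2\ \ge\ S_\infty(\bo G)\,|\bo G|^{-\frac{2}{p+1}}\|u\|_{L^{p+1}(\bo G)}^2.
\]
Taking the infimum over $u$ yields
\[
S_p(\bo G)\ge |\bo G|^{-\frac{2}{p+1}}S_\infty(\bo G),
\]
and the factor $|\bo G|^{-\frac{2}{p+1}}$ tends to $1$ as $p\to\infty$. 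Combining the two bounds proves \eqref{limit}.

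No step here is a real obstacle. The one point to check is that the Sobolev inequality $S_\infty(\bo G)\|u\|_{L^\infty}^2\le\int_{\bo G}|\nabla u|^2dx$ holds for every $u\in H_0^1(\bo G)$, not only for normalized ones. This follows from the definition of $S_\infty(\bo G)$ by homogeneity: replace $u$ by $u/\|u\|_{L^\infty(\bo G)}$. Alternatively, it follows directly from the Green function argument in Proposition~\ref{limiting problem}. Compactness of $\bo G$, which gives $|\bo G|<\infty$, is what makes the H\"older step work. We would also record, for use in the next results, that along any sequence $p_n\to\infty$ the normalized minimizers $v_{p_n}$ are then bounded in $H_0^1(\bo G)$.
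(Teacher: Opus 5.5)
Your proof is correct. The upper bound is the same as the paper's: you test $S_p(\bo G)$ with $w_{a_*}$ and use $\|w_{a_*}\|_{L^{p+1}(\bo G)}\to 1$.

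Your lower bound takes a different route. The paper argues by compactness. It takes normalized minimizers $v_p$, extracts a subsequence converging weakly in $H_0^1(\bo G)$ and uniformly, and proves $\|v_\infty\|_{L^\infty(\bo G)}=1$. The inequality $\ge 1$ comes from the same H\"older estimate you use. The inequality $\le 1$ comes from a contradiction argument in which the $L^{p+1}$ norm blows up. Weak lower semicontinuity then finishes. You instead combine H\"older with the homogeneous form of the limiting Sobolev inequality. This gives the explicit bound $S_p(\bo G)\ge |\bo G|^{-2/(p+1)}S_\infty(\bo G)$ for every $p>1$.

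Your route is shorter and quantitative. It does not need minimizers of $S_p(\bo G)$ at all. It also avoids having to choose a sequence realizing the $\liminf$ before extracting convergent subsequences, a step the paper's argument implicitly requires.

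What the paper's route buys is a byproduct that is reused in Proposition~\ref{normalized v_p}: every uniform limit $v_\infty$ of normalized minimizers satisfies $\|v_\infty\|_{L^\infty(\bo G)}=1$, so it is admissible for $S_\infty(\bo G)$. With your approach this fact must still be proved at that stage, but it comes cheaply once $S_p(\bo G)\to S_\infty(\bo G)$ is known. H\"older gives $\|v_\infty\|_{L^\infty(\bo G)}\ge 1$. Then $S_\infty(\bo G)\le \int_{\bo G}|\nabla v_\infty|^2dx/\|v_\infty\|_{L^\infty(\bo G)}^2\le S_\infty(\bo G)/\|v_\infty\|_{L^\infty(\bo G)}^2$ together with $S_\infty(\bo G)>0$ forces $\|v_\infty\|_{L^\infty(\bo G)}\le 1$. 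So the paper's contradiction argument becomes unnecessary.
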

\begin{proof}
We first prove the upper bound. Let $a_* \in M$, and let $w_{a_*}$ be defined by \eqref{w_a}. By Proposition~\ref{limiting problem}, $w_{a_*}$ is a minimizer of $S_\infty(\bo{G})$, and hence
\[
\|w_{a_*}\|_{L^\infty(\bo{G})} = 1,\qquad \int_{\bo{G}}|\nabla w_{a_*}|^2 dx = S_\infty(\bo{G}).
\]
Using $w_{a_*}$ as a test function in the definition of $S_p(\bo{G})$, we obtain
\[
S_p(\bo{G}) \leq \frac{\int_{\bo{G}}|\nabla w_{a_*}|^2 dx}{\(\int_{\bo{G}}|w_{a_*}|^{p+1} dx\)^{\frac{2}{p+1}}}.
\]
Since $\|w_{a_*}\|_{L^{p+1}(\bo{G})} \to \|w_{a_*}\|_{L^\infty(\bo{G})} = 1$ as $p \to \infty$, it follows that
\begin{equation}
\label{U. E.}
\limsup_{p \to \infty}S_p(\bo{G}) \leq S_\infty(\bo{G}).
\end{equation}

We next prove the lower bound. For each $p > 1$, let $v_p$ be a normalized nonnegative minimizer of $S_p(\bo{G})$. Then,
\[
\int_{\bo{G}}|\nabla v_p|^2 dx = S_p(\bo{G}).
\]
The above estimate implies that $v_p$ is bounded in $H_0^1(\bo{G})$. Hence, up to a subsequence, there exists $v_\infty \in H_0^1(\bo{G})$ such that
\begin{align*}
\begin{cases}
v_p \weakto v_\infty\ &\mbox{weakly in}\ H_0^1(\bo{G}),\\
v_p \to v_\infty\ &\mbox{uniformly on}\ \bo{G}.
\end{cases}
\end{align*}

Now we claim that $\|v_\infty\|_{L^\infty(\bo{G})} = 1$. Let $|\bo{G}| := \sum\limits_{\bo{e} \in \bo{E}}l(\bo{e})$. Then it follows that
\[
1 = \|v_p\|_{L^{p+1}(\bo{G})} \leq |G|^{\frac{1}{p+1}} \|v_p\|_{L^\infty(\bo{G})}.
\]
Therefore, the uniform convergence yields that
\[
1 \leq \|v_\infty\|_{L^\infty(\bo{G})}.
\]

On the other hand, suppose that $\|v_\infty\|_{L^\infty(\bo{G})} > 1$. By continuity, there exists a subset $\bo{U} \subset \bo{G}$ of positive measure and a constant $\delta > 0$ such that
\[
v_\infty(x) \geq 1 + \delta \qquad \mbox{for each}\ x \in \bo{U}.
\]
By uniform convergence, for sufficiently large $p$,
\[
v_p(x) \geq 1 + \frac{\delta}{2}\qquad \mbox{for each}\ x \in \bo{U}.
\]
Consequently,
\[
\int_{\bo{G}} |v_p|^{p+1} dx \geq \int_{\bo{U}} |v_p|^{p+1} dx \geq |\bo{U}|\(1 + \frac{\delta}{2} \)^{p+1} \to +\infty
\]
as $p \to \infty$. This contradicts $\|v_p\|_{L^{p+1}(\bo{G})} = 1$. Thus, $\|v_\infty\|_{L^\infty(\bo{G})} = 1$.

Since $v_\infty$ is admissible for $S_\infty(\bo{G})$, by the weak lower semicontinuity, we have
\begin{equation}
\label{L. E.}
S_\infty(\bo{G}) \leq \int_{\bo{G}} |\nabla v_\infty|^2 dx \leq \liminf_{p \to \infty}\int_{\bo{G}} |\nabla v_p|^2 dx = \liminf_{p \to \infty}S_p(\bo{G}).
\end{equation}
Combining the upper \eqref{U. E.} and lower \eqref{L. E.} estimates, we conclude that \eqref{limit}.
\end{proof}
Having established the convergence of the optimal Sobolev constants, we now turn to the asymptotic behavior of the corresponding  normalized minimizers. The next proposition shows that, up to a subsequence, these minimizers converge to a minimizer of the limiting problem, which is characterized by a normalized Dirichlet--Kirchhoff Green function.
\begin{proposition}
\label{normalized v_p}
Let $v_p$ be a normalized nonnegative minimizer of $S_p(\bo{G})$. Then, for each sequence $\{p_n\}$ satisfying $p_n \to \infty$, there exists a subsequence, still denoted by $\{p_n\}$, and a point $a_* \in M$ such that
\[
v_{p_n} \longrightarrow \frac{G(\cdot, a_*)}{G(a_*,a_*)}
\]
strongly in $H_0^1(\bo{G})$ and uniformly on $\bo{G}$.
\end{proposition}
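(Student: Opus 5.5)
The plan is to reuse the compactness argument already carried out in the lower bound of Proposition~\ref{S_p to S_infty}, and then to upgrade weak convergence to strong convergence by convergence of norms.

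First, fix a sequence $p_n \to \infty$. Since $\int_{\bo G}|\nabla v_{p_n}|^2dx = S_{p_n}(\bo G)$ is bounded by Proposition~\ref{S_p to S_infty}, the sequence $\{v_{p_n}\}$ is bounded in $H_0^1(\bo G)$. Using the compact embedding $H_0^1(\bo G)\hookrightarrow C(\bo G)$, we extract a subsequence with
\[
v_{p_n}\weakto v_\infty \ \mbox{weakly in } H_0^1(\bo G), \qquad v_{p_n}\to v_\infty \ \mbox{uniformly on } \bo G.
\]
Exactly as in the proof of Proposition~\ref{S_p to S_infty}, the two-sided argument gives $\|v_\infty\|_{L^\infty(\bo G)}=1$: the lower bound comes from $1\le |\bo G|^{1/(p+1)}\|v_p\|_{L^\infty}$, and the upper bound from the blow-up of $\int |v_p|^{p+1}$ on a set where $v_\infty\ge 1+\delta$. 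Moreover $v_\infty\ge 0$, since it is a uniform limit of nonnegative functions.

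Next, we chain weak lower semicontinuity with Proposition~\ref{S_p to S_infty}:
\[
S_\infty(\bo G)\le \int_{\bo G}|\nabla v_\infty|^2dx \le \liminf_{n}\int_{\bo G}|\nabla v_{p_n}|^2dx = \lim_n S_{p_n}(\bo G)=S_\infty(\bo G).
\]
Hence $v_\infty$ is a nonnegative minimizer of $S_\infty(\bo G)$, and $\|\nabla v_{p_n}\|_{L^2}\to\|\nabla v_\infty\|_{L^2}$. In the Hilbert space $H_0^1(\bo G)$, equipped with the Dirichlet norm (equivalent by Poincaré), weak convergence together with convergence of norms yields strong convergence. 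Uniform convergence we already have.

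Finally, we identify the limit. By the converse part of Proposition~\ref{limiting problem}, every nonnegative minimizer equals $G(\cdot,b)/G(b,b)$ for some $b\in M$. Setting $a_*:=b$ gives the claim.

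The only delicate point is this identification step. It relies on the equality case of Cauchy--Schwarz in Proposition~\ref{limiting problem}, which gives $\nabla v_\infty=\lambda\nabla G(\cdot,b)$; this forces $v_\infty=\lambda G(\cdot,b)$, and the normalization $v_\infty(b)=1$ fixes $\lambda=1/G(b,b)$. If that converse were not taken as given, I would redo this short argument directly at a point $b$ where $v_\infty(b)=1$.
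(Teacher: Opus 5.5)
Your proposal is correct and follows the paper's proof essentially step by step. The steps are: compactness and $\|v_\infty\|_{L^\infty(\bo{G})}=1$ taken from the proof of Proposition~\ref{S_p to S_infty}; minimality via weak lower semicontinuity and $S_{p_n}(\bo{G})\to S_\infty(\bo{G})$; identification of the limit through the converse part of Proposition~\ref{limiting problem}; and strong convergence from weak convergence plus convergence of the Dirichlet norms. Your explicit remark that $v_\infty\ge 0$ is needed to invoke that converse, which is stated only for nonnegative minimizers, is a small point the paper leaves implicit.
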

\begin{proof}
By the proof of Proposition~\ref{S_p to S_infty}, after passing to a subsequence, there exists $v_\infty \in H_0^1(\bo{G})$ such that
\begin{align*}
\begin{cases}
v_{p_n} \weakto v_\infty\ &\mbox{weakly in}\ H_0^1(\bo{G}),\\
v_{p_n} \to v_\infty\ &\mbox{uniformly on}\ \bo{G}.
\end{cases}
\end{align*}
Moreover, $\|v_\infty\|_{L^\infty(\bo{G})} = 1$. By the weak lower semicontinuity and the convergence $S_{p_n}(\bo{G}) \to S_\infty(\bo{G})$, we have
\[
S_\infty(\bo{G}) \leq \int_{\bo{G}}|\nabla v_\infty|^2 dx \leq \liminf_{n \to \infty}S_{p_n}(\bo{G}) = S_\infty(\bo{G}).
\]
Hence, $v_\infty$ is a minimizer of $S_\infty(\bo{G})$.

By the characterization of the minimizer of $S_\infty(\bo{G})$, there exists $a_* \in M$ such that
\[
v_{\infty} = \frac{G(\cdot,a_*)}{G(a_*,a_*)}.
\]
Furthermore, 
\[
\int_{\bo{G}}|\nabla v_{p_n}|^2 dx = S_{p_n}(\bo{G}) \to S_\infty(\bo{G}) = \int_{\bo{G}}|\nabla v_\infty|^2 dx.
\]
The weak convergence together with the convergence of the norms implies
\[
v_{p_n} \to v_\infty\ \mbox{strongly in}\ H_0^1(\bo{G}).
\]
The uniform convergence was already obtained above.
\end{proof}

\section{Proofs of Theorem~\ref{main} and Theorem~\ref{main2}}\label{sect;5}
In this section, we complete the proofs of Theorem~\ref{main} and Theorem~\ref{main2} by combining the convergence result obtained in the previous section with the relation between the normalized Sobolev minimizers and the least energy solution.
\begin{proof}[Proof of Theorem~\ref{main}]
By Proposition~\ref{M no vertex}, we have
\[
M\subset\bo{G}\setminus\bo{V}.
\]

Let $\{p_n\}$ be an arbitrary sequence satisfying $p_n \to \infty$. By Proposition~\ref{sigma_p}, the function
\[
v_p := \frac{u_p}{\|u_p\|_{L^{p+1}(\bo{G})}}
\]
is a nonnegative minimizer of $S_p(\bo{G})$ satisfying $\|v_p\|_{L^{p+1}(\bo{G})}=1$. By Proposition~\ref{normalized v_p}, there exists a subsequence, still denoted by $\{p_n\}$, and a point $a_* \in M$ such that
\[
v_{p_n} \longrightarrow \frac{G(\cdot, a_*)}{G(a_*,a_*)}
\]
strongly in $H_0^1(\bo{G})$ and uniformly on $\bo{G}$.

Moreover, by Proposition~\ref{sigma_p}, the least energy solution $u_p$ is given by $u_p = S_p(\bo{G})^{\frac{1}{p-1}}v_p$.\\
Since $S_p(\bo{G}) \to S_\infty(\bo{G})>0$, we have $S_p(\bo{G})^{\frac{1}{p-1}} \to 1$. Therefore
\[
u_{p_n} \longrightarrow \frac{G(\cdot, a_*)}{G(a_*,a_*)}
\]
strongly in  $H_0^1(\bo{G})$ and uniformly on $\bo{G}$. This completes the proof.
\end{proof}
We next prove Theorem~\ref{main2} by combining the uniform convergence in Theorem~\ref{main} with the maximum property of the Green function established in Proposition~\ref{maximum; GF}.
\begin{proof}[Proof of Theorem~\ref{main2}]
Let $\{p_n\}$ be an arbitrary sequence satisfying $p_n\to\infty$. Since $\bo{G}$ is compact, the sequence $\{x_{p_n}\}$ admits a convergent subsequence. Let $x_*$ be an arbitrary accumulation point of $\{x_{p_n}\}$, and, after passing to a subsequence, assume that
\[
x_{p_n}\to x_*.
\]
By Theorem~\ref{main}, after passing to a further subsequence, there exists $a_*\in M$ such that
\[
u_{p_n}\to \frac{G(\cdot,a_*)}{G(a_*,a_*)}
\]
uniformly on $\bo{G}$. Since $x_{p_n}$ is a maximum point of $u_{p_n}$, we have
\[
u_{p_n}(x_{p_n})\geq u_{p_n}(a_*).
\]
Passing to the limit, we obtain
\[
\frac{G(x_*,a_*)}{G(a_*,a_*)} \geq \frac{G(a_*,a_*)}{G(a_*,a_*)} = 1.
\]
On the other hand, Proposition~\ref{maximum; GF} gives
\[
G(x_*,a_*)\leq G(a_*,a_*).
\]
Hence,
\[
G(x_*,a_*)=G(a_*,a_*).
\]
Again by Proposition~\ref{maximum; GF}, the maximum point of
$G(\cdot,a_*)$ is unique, and therefore
\[
x_*=a_*\in M.
\]
Thus, every accumulation point of $\{x_{p_n}\}$ belongs to $M$. Since $\bo{G}$ is compact, it follows that
\[
dist(x_{p_n},M)\to0.
\]
Since $\{p_n\}$ was arbitrary, we conclude that
\[
dist(x_p,M)\to0 \qquad \mbox{as } p\to\infty.
\]
Finally, if $M=\{a_*\}$, then every accumulation point of $\{x_p\}$ is equal to $a_*$. Hence,
\[
x_p\to a_* \qquad \mbox{as } p\to\infty.
\]
\end{proof}

\section{Local asymptotic behavior around maximum points}\label{sect;6}
In this section, we study the local asymptotic behavior of the least energy solution $u_p$ around its maximum points as $p \to \infty$. The rescaling analysis for large-exponent problems goes back to Adimurthi--Grossi \cite{A-G}, and related one-dimensional asymptotic results were obtained by Takahashi \cite{Takahashi}. By Theorem~\ref{main2} and Proposition~\ref{M no vertex}, the maximum points of $u_p$ approach the set $M$, which is contained in the interior of the edges. Therefore, for sufficiently large $p$, the behavior of $u_p$ near its maximum points can be regarded locally as a one-dimensional problem. Motivated by the above results, we introduce a suitable rescaling around a maximum point of $u_p$ and derive the corresponding limiting profile on $\mathbb{R}$.

Let $x_p \in \bo{G}$ be a maximum point of $u_p$. For sufficiently large $p$, let $\bo{e}_p$ be the edge containing $x_p$ in its interior. We identify $\bo{e}_p$ with an interval and denote by $d_p^-$ and $d_p^+$ the distances from $x_p$ to the two endpoints of $\bo{e}_p$, respectively. Define $\varepsilon_p > 0$ and $\widetilde{u}_p$ by
\begin{equation}
\label{eqn;rescaling}
\left\{
\begin{aligned}
&p\varepsilon_p^2 \|u_p\|_{L^\infty(\bo{G})}^{p-1} = 1,\\
&\widetilde{u}_p(t) = \frac{p}{\|u_p\|_{L^\infty(\bo{G})}} \left\{ u_p(x_p+\varepsilon_p t) - \|u_p\|_{L^\infty(\bo{G})} \right\},\\
&t\in I_p := \( -\frac{d_p^-}{\varepsilon_p}, \frac{d_p^+}{\varepsilon_p} \).
\end{aligned}
\right.
\end{equation}
The following theorem describes the limiting profile of the rescaled least energy solutions.
\begin{theorem}
Let $\widetilde{u}_p$ be defined by \eqref{eqn;rescaling}. Then
\[
\varepsilon_p \longrightarrow 0
\]
and
\[
\widetilde{u}_p(t) \longrightarrow U(t) := \log \frac{4e^{\sqrt{2}t}}{\(1+e^{\sqrt{2}t}\)^2} \quad \mbox{in }C_{loc}^1(\mathbb{R})
\]
as $p\to\infty$.
\end{theorem}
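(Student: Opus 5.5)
We will first establish the size of $\|u_p\|_{L^\infty(\bo G)}$. By Theorem~\ref{main}, $u_p\to G(\cdot,a_*)/G(a_*,a_*)$ uniformly along subsequences, so $\|u_p\|_{L^\infty(\bo G)}\to 1$ for the full family, since every subsequential limit equals $1$. Hence $\|u_p\|_{L^\infty}^{p-1}$ needs a lower bound, and it suffices to show $\liminf_p \|u_p\|_{L^\infty}\ge 1$. This already follows from $\|u_p\|_{L^\infty}\to1$. We also get the growth
\[
p\,\varepsilon_p^2=\|u_p\|_{L^\infty}^{-(p-1)}.
\]
To show $\varepsilon_p\to0$ we need $\|u_p\|_{L^\infty}^{p-1}$ bounded below by a positive constant, which gives $\varepsilon_p^2\le C/p$. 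For this we use the energy identity $\int|\nabla u_p|^2=\int u_p^{p+1}\le |\bo G|\,\|u_p\|_\infty^{p+1}$. Together with $\int|\nabla u_p|^2=S_p^{(p+1)/(p-1)}\to S_\infty>0$ (Proposition~\ref{sigma_p}, Proposition~\ref{S_p to S_infty}), this gives $\|u_p\|_\infty^{p+1}\ge c>0$. Hence $\varepsilon_p\le C p^{-1/2}\to0$.

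Next we control the rescaled interval and the equation. By Theorem~\ref{main2} and Proposition~\ref{M no vertex}, $d_p^\pm$ stay bounded below by a positive constant (the distance from $M$ to $\bo V$). Since $\varepsilon_p\to0$, $I_p\to\mathbb R$. A direct computation from $-u_p''=u_p^p$ on $\bo e_p$ gives
\[
-\widetilde u_p''=\Bigl(1+\tfrac{\widetilde u_p}{p}\Bigr)^p,\qquad \widetilde u_p\le 0,\quad \widetilde u_p(0)=0,\quad \widetilde u_p'(0)=0.
\]
The last condition holds because $x_p$ is an interior maximum of a smooth function on the edge. Since $0<(1+\widetilde u_p/p)^p\le1$, we get $|\widetilde u_p''|\le1$. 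Combined with the initial data, this yields $|\widetilde u_p'(t)|\le|t|$ and $|\widetilde u_p(t)|\le t^2/2$ on $I_p$. So $\widetilde u_p$ is bounded in $C^2$ on compacts.

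By Arzel\`a--Ascoli, a subsequence converges in $C^1_{loc}(\mathbb R)$ to some $U$. On compacts $\widetilde u_p/p\to0$ uniformly, so $(1+\widetilde u_p/p)^p\to e^{U}$ locally uniformly. Passing to the limit in the integrated form $\widetilde u_p'(t)=-\int_0^t(1+\widetilde u_p/p)^p$ then gives $-U''=e^U$ with $U(0)=U'(0)=0$.

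The Cauchy problem has a unique solution. Integrating once gives $(U')^2=2(1-e^U)$, and one checks that $\log\bigl(4e^{\sqrt2 t}/(1+e^{\sqrt2 t})^2\bigr)=\log\operatorname{sech}^2(t/\sqrt2)$ satisfies the ODE with these initial data. So the limit is independent of the subsequence, and the whole family converges.

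The main obstacle is the lower bound on $\|u_p\|_\infty^{p-1}$ that forces $\varepsilon_p\to0$. The energy bound above handles it cheaply, since it only needs $\|u_p\|_\infty^{p+1}\gtrsim1$, and $\|u_p\|_\infty\to1$ turns this into the same bound for the power $p-1$. A secondary point is making sure $x_p$ is interior for large $p$, which Theorem~\ref{main2} with $M\subset\bo G\setminus\bo V$ guarantees.
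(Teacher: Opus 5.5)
Your argument is correct and follows essentially the paper's route. The steps are: a lower bound on $\|u_p\|_{L^\infty(\bo G)}^{p-1}$ from $S_p(\bo G)\to S_\infty(\bo G)>0$ and the $L^{p+1}$-normalization, then $d_p^\pm\ge\delta$, the bound $|\widetilde u_p''|\le 1$, Arzel\`a--Ascoli, and uniqueness for $-U''=e^U$, $U(0)=U'(0)=0$. For the first step you go through the energy identity plus $\|u_p\|_{L^\infty(\bo G)}\to 1$, while the paper goes directly through $u_p=S_p(\bo G)^{1/(p-1)}v_p$ and $\|v_p\|_{L^\infty(\bo G)}\ge|\bo G|^{-1/(p+1)}$. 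Drop the aside in your first paragraph that $\liminf_p\|u_p\|_{L^\infty(\bo G)}\ge1$ would suffice: convergence to $1$ alone does not bound $\|u_p\|_{L^\infty(\bo G)}^{p-1}$ from below, and it is the energy bound you give afterwards that does the job.
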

\begin{proof}
We first show that $\varepsilon_p \to 0$ as $p \to \infty$. By Proposition~\ref{sigma_p}, we have
\[
u_p = S_p(\bo{G})^{\frac{1}{p-1}}v_p, \qquad \|v_p\|_{L^{p+1}(\bo{G})} = 1,
\]
where $v_p$ is a nonnegative minimizer of $S_p(\bo{G})$. Hence,
\[
1 = \int_{\bo{G}}v_p^{p+1} dx \leq |\bo{G}| \|v_p\|_{L^\infty(\bo{G})}^{p+1},
\]
and therefore
\[
\|v_p\|_{L^\infty(\bo{G})} \geq |\bo{G}|^{-\frac{1}{p+1}}.
\]
Consequently,
\begin{align*}
p\|u_p\|_{L^\infty(\bo{G})}^{p-1} &= p S_p(\bo{G}) \|v_p\|_{L^\infty(\bo{G})}^{p-1}\\
&\geq p S_p(\bo{G}) |\bo{G}|^{-\frac{p-1}{p+1}}.
\end{align*}
By Proposition~\ref{S_p to S_infty},
\[
S_p(\bo{G}) \longrightarrow S_\infty(\bo{G})>0.
\]
Thus,
\[
p\|u_p\|_{L^\infty(\bo{G})}^{p-1} \longrightarrow + \infty.
\]
It follows from \eqref{eqn;rescaling} that
\[
\varepsilon_p \longrightarrow 0.
\]
By Proposition~\ref{M no vertex}, we have
\[
M \subset \bo{G}\setminus\bo{V}.
\]
Since $M$ is compact and $\bo{V}$ is finite, it follows that
\[
dist(M,\bo{V})>0.
\]
Moreover, Theorem~\ref{main2} gives
\[
dist(x_p,M) \longrightarrow 0.
\]
Hence, there exist $\delta>0$ and $p_0>1$ such that
\[
dist(x_p,\bo{V}) \geq \delta \qquad \mbox{for all } p\ge p_0.
\]
In particular,
\[
d_p^- \geq \delta, \qquad d_p^+ \geq \delta
\]
for all sufficiently large $p$. Since $\varepsilon_p \to 0$, we obtain
\[
\frac{d_p^-}{\varepsilon_p}\longrightarrow +\infty, \qquad \frac{d_p^+}{\varepsilon_p} \longrightarrow +\infty.
\]
Therefore, for every compact interval $K \subset \mathbb{R}$,
\[
K\subset I_p
\]
for all sufficiently large $p$. By \eqref{eqn;rescaling}, we have
\[
u_p(x_p+\varepsilon_p t) = \|u_p\|_{L^\infty(\bo{G})} \( 1 + \frac{\widetilde{u}_p(t)}{p} \).
\]
Since $u_p$ satisfies
\[
-u_p''=u_p^p
\]
on the edge containing $x_p$, it follows that
\begin{align*}
-\widetilde{u}_p''(t) &= -\frac{p\varepsilon_p^2}{\|u_p\|_{L^\infty(\bo{G})}} u_p''(x_p + \varepsilon_p t)\\
&= p\varepsilon_p^2 \|u_p\|_{L^\infty(\bo{G})}^{p-1}\( 1 + \frac{\widetilde{u}_p(t)}{p}\)^p.
\end{align*}
Hence, by \eqref{eqn;rescaling},
\[
-\widetilde{u}_p''(t) = \( 1 + \frac{\widetilde{u}_p(t)}{p} \)^p \qquad \mbox{in } I_p.
\]
Moreover, since $x_p$ is a maximum point of $u_p$,
\[
\widetilde{u}_p(0) = 0, \qquad \widetilde{u}_p'(0) = 0,
\]
and
\[
-p<\widetilde{u}_p(t) \leq 0 \qquad\mbox{for } t \in I_p.
\]
Then, we have
\[
0 < \( 1 + \frac{\widetilde{u}_p(t)}{p} \)^p \leq 1.
\]
Therefore,
\[
| \widetilde{u}_p''(t)| \leq 1 \qquad \mbox{for } t \in I_p.
\]
Since $\widetilde{u}_p'(0) = 0$, it follows that
\[
|\widetilde{u}_p'(t)| \leq | t |.
\]
Moreover, using $\widetilde{u}_p(0) = 0$, we obtain
\[
|\widetilde{u}_p(t)| \leq \frac{t^2}{2}.
\]

Let $R>0$. Since every compact interval is contained in $I_p$ for all sufficiently large $p$, the above estimates imply
\[
\|\widetilde{u}_p\|_{L^\infty(-R,R)} \leq \frac{R^2}{2}, \qquad \|\widetilde{u}_p'\|_{L^\infty(-R,R)} \leq R, \qquad \|\widetilde{u}_p''\|_{L^\infty(-R,R)} \leq 1.
\]
Hence, by the Arzel\`a--Ascoli theorem and a diagonal argument, there exist a subsequence, still denoted by $\{\widetilde{u}_p\}$, and a function $U \in C^1(\mathbb{R})$ such that
\[
\widetilde{u}_p \longrightarrow U \quad \mbox{in }C_{loc}^1(\mathbb{R}).
\]
Since $\widetilde{u}_p \to U$ uniformly on $[-R,R]$, the sequence $\{\widetilde{u}_p\}$ is uniformly bounded on $[-R,R]$. Hence,
\[
p\log \( 1 + \frac{\widetilde{u}_p}{p} \) = \widetilde{u}_p + o(1)
\]
uniformly on $[-R,R]$. Therefore,
\[
\( 1 + \frac{\widetilde{u}_p}{p} \)^p \longrightarrow e^U
\]
uniformly on $[-R,R]$. It follows from the equation for $\widetilde{u}_p$ that
\[
-U''=e^U \qquad \mbox{in } \mathbb{R}.
\]
Moreover,
\[
U(0)=0, \qquad U'(0)=0.
\]
Thus $U$ satisfies
\[
\left\{
\begin{aligned}
-U''&=e^U &&\mbox{in } \mathbb{R},\\
U(0)&= 0,\\
U'(0)&= 0.
\end{aligned}
\right.
\]
By the uniqueness of the solution of this initial value problem,
\[
U(t) = \log \frac{4e^{\sqrt{2}t}} {\( 1 + e^{\sqrt{2}t} \)^2}.
\]
Since every convergent subsequence has the same limit $U$, the whole sequence converges to $U$ in $C_{\mathrm{loc}}^1(\mathbb{R})$.
\end{proof}

\section{The Kirchhoff--Neumann problem}\label{sect;7}
In this section, we study the variational structure of the Kirchhoff--Neumann problem \eqref{E-L Neumann}. Integrating the equation in \eqref{E-L Neumann} over $\bo{G}$ and using the Kirchhoff--Neumann boundary conditions, we obtain
\begin{equation}
\label{Neumann constraint}
\int_{\bo{G}} |u|^{p-1}u\, dx=0.
\end{equation}
In particular, every nontrivial solution of \eqref{E-L Neumann} changes sign. For a function $u$, we denote its positive and negative parts by
\[
u^+ := \max\{u, 0\}, \qquad u^- := \max\{-u, 0\},
\]
respectively. For each $p>1$, we also define the optimal Sobolev constant by
\[
S_p^N(\bo{G}) := \inf_{u \in \mathcal{A}_p} \frac{\displaystyle\int_{\bo{G}}|\nabla u|^2 dx}{\displaystyle \(\int_{\bo{G}}|u|^{p+1} dx\)^{\frac{2}{p+1}}}.
\]
The optimal constant $S_p^N(\bo{G})$ is indeed attained by a function that satisfies the corresponding Euler--Lagrange equation, leading us to a solution to the Kirchhoff--Neumann problem, which reads as follows.
\begin{proposition}
\label{S_p Neumann}
For each $p>1$, $S_p^N(\bo{G})$ is attained by a function $v_p\in\mathcal{A}_p$. Moreover, if $\|v_p\|_{L^{p+1}(\bo{G})} = 1$, then $v_p$ satisfies
\[
-\Delta v_p = S_p^N(\bo{G})|v_p|^{p-1}v_p \quad\text{in }\bo{G}
\]
in the weak sense. Consequently,
\begin{equation}
\label{u_p Neumann}
u_p := S_p^N(\bo{G})^{\frac{1}{p-1}}v_p
\end{equation}
is a solution of the Kirchhoff--Neumann problem.
\end{proposition}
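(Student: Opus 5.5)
The plan is to follow the proof of Proposition~\ref{S_p} by the direct method, with two new ingredients: keeping the nonlinear constraint in $\mathcal{A}_p$ under limits, and justifying the Lagrange multiplier, which is harder because the constraint is not a linear subspace.

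\emph{Existence.} First, $\mathcal{A}_p \neq \emptyset$. Take two disjoint closed segments inside one edge and bumps $\phi_1,\phi_2\ge 0$ supported on them. Then $\phi_1 - c\phi_2$ lies in $\mathcal{A}_p$ for a suitable $c>0$.

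Next, $S_p^N(\bo G)>0$ and minimizing sequences are bounded in $H^1(\bo G)$. The only danger is constants, which are excluded from $\mathcal{A}_p$ because $\int |u|^{p-1}u=0$ forces $u$ to change sign. For $u\in\mathcal{A}_p$ continuous, $u$ vanishes somewhere, say $u(x_0)=0$. Since $\bo G$ is connected and compact, this gives $\|u\|_{L^\infty}\le |\bo G|^{1/2}\|\nabla u\|_{L^2}$.

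Now let $\{v_n\}\subset \mathcal{A}_p$ with $\|v_n\|_{L^{p+1}}=1$ and $\int|\nabla v_n|^2\to S_p^N(\bo G)$. Then $\{v_n\}$ is bounded in $H^1(\bo G)$. By the compact embedding $H^1(\bo G)\hookrightarrow C(\bo G)$, up to a subsequence $v_n\weakto v_p$ weakly in $H^1$ and $v_n\to v_p$ uniformly. Uniform convergence preserves both $\|v_p\|_{L^{p+1}}=1$ and $\int|v_p|^{p-1}v_p=0$, so $v_p\in\mathcal{A}_p$. Weak lower semicontinuity then shows $v_p$ is a minimizer.

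\emph{Euler--Lagrange equation.} This is the main obstacle. I would not use a two-constraint multiplier rule directly, because it produces an extra term $\mu |v_p|^{p-1}$ that must be shown to vanish. Instead I would argue with variations that stay in $\mathcal{A}_p$.

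Given $\varphi\in H^1(\bo G)$, consider $v_p+t\varphi+s\,\psi$, where $\psi$ is chosen so that $F(v):=\int|v|^{p-1}v$ satisfies $F'(v_p)\psi=p\int |v_p|^{p-1}\psi\neq0$. For instance, $\psi=1$ works since $v_p\neq0$. Because $F$ is $C^1$ on $H^1$ for $p>1$, the implicit function theorem gives $s(t)$ with $s(0)=0$ and
\[
s'(0)=-\frac{\int|v_p|^{p-1}\varphi}{\int|v_p|^{p-1}}.
\]
Differentiating the Rayleigh quotient at $t=0$ yields
\[
\int\nabla v_p\nabla\varphi - S_p^N\int|v_p|^{p-1}v_p\varphi=\lambda\int |v_p|^{p-1}\varphi,
\]
where
\[
\lambda = -\frac{S_p^N\int |v_p|^{p+1}\cdot 0+\bigl(\int\nabla v_p\nabla 1-S_p^N\int|v_p|^{p-1}v_p\bigr)}{\int|v_p|^{p-1}}.
\]
The bracket vanishes: $\nabla 1=0$ and $\int|v_p|^{p-1}v_p=0$, so $\lambda=0$. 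Directly, this reflects that the quotient is invariant under adding the direction $\psi=1$ to first order: the gradient of a constant is zero and the $L^{p+1}$ derivative is $(p+1)\int|v_p|^{p-1}v_p=0$.

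Hence $-\Delta v_p=S_p^N(\bo G)|v_p|^{p-1}v_p$ weakly in $H^1(\bo G)$. Integrating by parts edge by edge with arbitrary $\varphi\in H^1(\bo G)$ gives the Kirchhoff conditions at $\bo V_{int}$ and the Neumann conditions at $\bo V_{end}$.

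\emph{Scaling.} Finally, $u_p=S_p^N(\bo G)^{1/(p-1)}v_p$ satisfies $-\Delta u_p=|u_p|^{p-1}u_p$ by homogeneity. Therefore $u_p$ solves \eqref{E-L Neumann}.
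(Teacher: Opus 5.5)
Your proof is correct and is essentially the paper's argument. Existence is by the direct method, with the constraint $\int|v|^{p-1}v=0$ preserved under uniform convergence. For the Euler--Lagrange equation, your implicit-function variation $v_p+t\varphi+s(t)$ along the constant direction is a self-contained version of the multiplier rule the paper quotes. The extra multiplier vanishes for the same reason as in the paper's step ``taking $\varphi=1$ gives $\mu_p=0$''. Your packaging has one small advantage: it does not need the nondegeneracy of the two constraint derivatives, which the paper's two-multiplier rule uses without comment.

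Your displayed formula for $\lambda$ has a stray minus sign and a spurious ``$\cdot 0$'' term. It should read $\lambda=\bigl(\int\nabla v_p\nabla 1-S_p^N\int|v_p|^{p-1}v_p\bigr)/\int|v_p|^{p-1}$, which is zero anyway, so nothing downstream is affected.
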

\begin{proof}
The existence of a minimizer follows as in the proof of Proposition~\ref{S_p}. We only note that the additional constraint \eqref{Neumann constraint} is preserved under uniform convergence.

It remains to derive the Euler--Lagrange equation. By the method of
Lagrange multipliers, there exist $\lambda_p, \mu_p \in \mathbb{R}$ such
that
\[
\int_{\bo{G}}\nabla v_p \nabla\varphi\,dx = \lambda_p \int_{\bo{G}}|v_p|^{p-1}v_p\varphi\,dx + \mu_p \int_{\bo{G}}|v_p|^{p-1}\varphi\,dx
\]
for every $\varphi\in H^1(\bo{G})$. Taking $\varphi=1$, we obtain $\mu_p=0$, while taking $\varphi=v_p$ yields
\[
\lambda_p = S_p^N(\bo{G}).
\]
Therefore,
\[
-\Delta v_p = S_p^N(\bo{G})|v_p|^{p-1}v_p
\]
in the weak sense. Defining $u_p$ as in \eqref{u_p Neumann}, we obtain a solution of \eqref{E-L Neumann}.
\end{proof}
The following proposition shows the explicit relation between least energy and the optimal Sobolev constant.
\begin{proposition}
\label{sigma_p^N form}
For each $p>1$, the least energy satisfies
\begin{equation}
\label{sigma_p^N relation}
\sigma_p^N = \frac{p-1}{2(p+1)} S_p^N(\bo{G})^{\frac{p+1}{p-1}}.
\end{equation}
Moreover, if $v_p$ is a minimizer of $S_p^N(\bo{G})$ satisfying $\|v_p\|_{L^{p+1}(\bo{G})}=1$, then the function $u_p$ defined by \eqref{u_p Neumann} attains $\sigma_p^N$. In particular, $u_p$ is a least energy solution of \eqref{E-L Neumann}. Conversely, if $u_p$ is a least energy solution of \eqref{E-L Neumann}, then
\[
v_p:=\frac{u_p}{\|u_p\|_{L^{p+1}(\bo{G})}}
\]
is a minimizer of $S_p^N(\bo{G})$ satisfying $\|v_p\|_{L^{p+1}(\bo{G})} = 1$, and
\[
u_p=S_p^N(\bo{G})^{\frac{1}{p-1}}v_p.
\]
\end{proposition}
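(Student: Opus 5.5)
The plan is to follow the proof of Proposition~\ref{sigma_p}, with two changes: the Nehari constraint set is replaced by $\mathcal{A}_p$, and Proposition~\ref{S_p Neumann} is used in place of Proposition~\ref{S_p}.

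\textbf{Step 1: the formula for $\sigma_p^N$.} The key observation is that $\mathcal{A}_p$ is invariant under positive dilations. If $u\in\mathcal{A}_p$ and $t>0$, then
\[
\int_{\bo{G}}|tu|^{p-1}tu\,dx=t^p\int_{\bo{G}}|u|^{p-1}u\,dx=0 ,
\]
so $tu\in\mathcal{A}_p$. Also $u\in\mathcal{A}_p$ forces $\int_{\bo{G}}|\nabla u|^2dx>0$. Indeed, a nonzero constant $c$ gives $\int_{\bo{G}}|c|^{p-1}c\,dx=|\bo{G}||c|^{p-1}c\ne0$, and $\bo{G}$ is connected. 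Hence the fiber computation from Proposition~\ref{sigma_p} applies verbatim. For each $u\in\mathcal{A}_p$ it gives
\[
\sup_{t>0}J_p(tu)=\frac{p-1}{2(p+1)}\left(\frac{\int_{\bo{G}}|\nabla u|^2dx}{(\int_{\bo{G}}|u|^{p+1}dx)^{2/(p+1)}}\right)^{\frac{p+1}{p-1}}.
\]
Since $s\mapsto s^{\frac{p+1}{p-1}}$ is increasing, taking the infimum over $\mathcal{A}_p$ yields \eqref{sigma_p^N relation}.

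\textbf{Step 2: a normalized minimizer gives a least energy solution.} Let $v_p$ be a minimizer of $S_p^N(\bo{G})$ with $\|v_p\|_{L^{p+1}(\bo{G})}=1$, and set $u_p$ as in \eqref{u_p Neumann}. Proposition~\ref{S_p Neumann} shows that $u_p$ solves \eqref{E-L Neumann}. Testing the weak equation with $u_p$ gives $\int_{\bo{G}}|\nabla u_p|^2dx=\int_{\bo{G}}|u_p|^{p+1}dx=S_p^N(\bo{G})^{\frac{p+1}{p-1}}$. Therefore
\[
J_p(u_p)=\frac{p-1}{2(p+1)}S_p^N(\bo{G})^{\frac{p+1}{p-1}}=\sigma_p^N .
\]

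\textbf{Step 3: the converse.} Let $u_p$ be a least energy solution, meaning a nontrivial solution of \eqref{E-L Neumann} with $J_p(u_p)=\sigma_p^N$. Integrating the equation gives \eqref{Neumann constraint}, so $u_p\in\mathcal{A}_p$. Testing with $u_p$ gives the Nehari identity
\[
\int_{\bo{G}}|\nabla u_p|^2dx=\int_{\bo{G}}|u_p|^{p+1}dx ,
\]
hence $J_p(u_p)=\frac{p-1}{2(p+1)}\int_{\bo{G}}|\nabla u_p|^2dx$. Combining this with $J_p(u_p)=\sigma_p^N$ and Step 1, both integrals equal $S_p^N(\bo{G})^{\frac{p+1}{p-1}}$. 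Consequently the Rayleigh quotient of $v_p:=u_p/\|u_p\|_{L^{p+1}(\bo{G})}$ is exactly $S_p^N(\bo{G})$, and $v_p\in\mathcal{A}_p$ by homogeneity, so $v_p$ is a minimizer. Finally, $\|u_p\|_{L^{p+1}(\bo{G})}=S_p^N(\bo{G})^{\frac{1}{p-1}}$, which gives $u_p=S_p^N(\bo{G})^{\frac{1}{p-1}}v_p$.

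\textbf{Main point needing care.} The only delicate issue is the meaning of ``least energy solution''. Step 3 uses that such a solution is a nontrivial solution of \eqref{E-L Neumann} attaining $\sigma_p^N$, so that it automatically lies in $\mathcal{A}_p$ and satisfies the Nehari identity. With that definition the argument is routine.
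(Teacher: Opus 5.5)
Your proposal is correct and follows essentially the same route as the paper. The formula comes from the dilation invariance of $\mathcal{A}_p$ together with the fiber computation of Proposition~\ref{sigma_p}, the direct statement from Proposition~\ref{S_p Neumann}, and the converse from the Nehari identity combined with that formula; you only make explicit some details the paper leaves implicit, such as the nonconstancy of elements of $\mathcal{A}_p$ and the evaluation $J_p(u_p)=\sigma_p^N$.
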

\begin{proof}
Since $\mathcal{A}_p$ is invariant under multiplication by positive constants, the same argument as in the proof of Proposition~\ref{sigma_p} gives \eqref{sigma_p^N relation}. The last statement follows from Proposition~\ref{S_p Neumann}.

Conversely, let $u_p$ be a least energy solution of \eqref{E-L Neumann}. Testing the equation with $u_p$, we obtain
\[
\int_{\bo{G}}|\nabla u_p|^2 dx = \int_{\bo{G}}|u_p|^{p+1}dx.
\]
Hence
\[
\sigma_p^N = J_p(u_p) = \frac{p-1}{2(p+1)} \int_{\bo{G}}|u_p|^{p+1}dx.
\]
Combining this with \eqref{sigma_p^N relation}, we have
\[
\int_{\bo{G}}|u_p|^{p+1}dx = S_p^N(\bo{G})^{\frac{p+1}{p-1}}.
\]
Therefore,
\[
\|u_p\|_{L^{p+1}(\bo{G})} = S_p^N(\bo{G})^{\frac{1}{p-1}}.
\]
Define
\[
v_p := \frac{u_p}{\|u_p\|_{L^{p+1}(\bo{G})}}.
\]
Then $\|v_p\|_{L^{p+1}(\bo{G})} = 1$. Moreover, since $u_p$ satisfies \eqref{Neumann constraint}, we have $v_p \in \mathcal{A}_p$. Using the identity above, we obtain
\[
\int_{\bo{G}}|\nabla v_p|^2 dx = \frac{\displaystyle\int_{\bo{G}}|\nabla u_p|^2 dx}{\|u_p\|_{L^{p+1}(\bo{G})}^2} = S_p^N(\bo{G}).
\]
Thus $v_p$ is a minimizer of $S_p^N(\bo{G})$. Finally,
\[
u_p = S_p^N(\bo{G})^{\frac{1}{p-1}}v_p.
\]

\end{proof}

\section{Asymptotic behavior of the optimal Sobolev constants for the Kirchhoff--Neumann problem}\label{sect;8}
In this section, we study the limiting variational problem related to the optimal Sobolev constant introduced in the previous section as $p \to \infty$. The explicit formula established in Proposition~\ref{sigma_p^N form} reduces the asymptotic analysis of least energy solutions to the study of the optimal Sobolev constant. We therefore turn to the corresponding limiting variational problem. We define the limiting Sobolev constant by
\begin{equation}
\label{S_infty Neumann}
S_\infty^N(\bo{G}) := \inf_{\substack{u \in H^1(\bo{G})\\ \max_{\bo{G}}u = 1,\ \min_{\bo{G}}u = -1}} \int_{\bo{G}}|\nabla u|^2 dx.
\end{equation}
\begin{proposition}
\label{S_infty Neumann minimizer}
The infimum $S_\infty^N(\bo{G})$ is attained by a function $v_\infty \in H^1(\bo{G})$.
\end{proposition}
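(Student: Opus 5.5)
We argue by the direct method, as in Propositions~\ref{S_p} and \ref{S_infty}. The admissible class
\[
\mathcal{B} := \left\{ u \in H^1(\bo{G}) \ \middle|\ \max_{\bo{G}} u = 1,\ \min_{\bo{G}} u = -1 \right\}
\]
is nonempty, since $\bo{G}$ has positive length. Fix two distinct points $a,b$ joined by a path, and let $u$ be a function that is linear in arclength along that path from $-1$ at $a$ to $1$ at $b$, extended continuously and edgewise in $H^1$ to all of $\bo{G}$ with values in $[-1,1]$ (for instance, by truncating a continuous piecewise linear function). This function lies in $\mathcal{B}$, so $S_\infty^N(\bo{G}) < \infty$.

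Now take a minimizing sequence $\{v_n\} \subset \mathcal{B}$. Its gradient norms $\int_{\bo{G}} |\nabla v_n|^2\,dx$ are bounded. Moreover $\|v_n\|_{L^\infty(\bo{G})} = 1$, so $\|v_n\|_{L^2(\bo{G})}^2 \le |\bo{G}|$. Hence $\{v_n\}$ is bounded in $H^1(\bo{G})$. This is the one point where the Neumann setting differs from the Dirichlet case: there is no Poincar\'e inequality, but the $L^\infty$ constraint supplies the missing control of the $L^2$ part.

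By the compact embedding $H^1(\bo{G}) \hookrightarrow C(\bo{G})$ on a compact metric graph (on each edge this is the one-dimensional Sobolev embedding, and continuity at vertices passes to uniform limits), a subsequence satisfies $v_n \weakto v_\infty$ weakly in $H^1(\bo{G})$ and $v_n \to v_\infty$ uniformly on $\bo{G}$. Uniform convergence preserves both constraints, since $\max_{\bo{G}}$ and $\min_{\bo{G}}$ are continuous with respect to the sup norm. Thus $\max_{\bo{G}} v_\infty = 1$ and $\min_{\bo{G}} v_\infty = -1$, so $v_\infty \in \mathcal{B}$. Weak lower semicontinuity of $u \mapsto \int_{\bo{G}} |\nabla u|^2\,dx$ then gives
\[
\int_{\bo{G}} |\nabla v_\infty|^2\,dx \le \liminf_{n\to\infty} \int_{\bo{G}} |\nabla v_n|^2\,dx = S_\infty^N(\bo{G}),
\]
so $v_\infty$ attains the infimum.

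The only genuinely new point is the $H^1$ bound in the absence of Dirichlet conditions, and it is handled by the $L^\infty$ normalization as above. Everything else is routine.
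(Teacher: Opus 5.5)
Your proof is correct and is essentially the paper's argument: the constraint $-1\le v_n\le 1$ gives the $L^2$ bound that replaces the Poincar\'e inequality, the compact embedding $H^1(\bo{G})\hookrightarrow C(\bo{G})$ preserves the max/min constraints, and weak lower semicontinuity finishes. Your explicit check that the admissible class is nonempty is a small point the paper leaves implicit; a clean admissible function is $\max\{-1,\,1-2\,dist(\cdot,x_0)/r\}$ for any point $x_0$ and sufficiently small $r>0$.
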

\begin{proof}
The proof is similar to that of Proposition~\ref{S_infty}. Indeed, if $\{v_n\}$ is a minimizing sequence for $S_\infty^N(\bo{G})$, then
\[
-1\leq v_n \leq 1 \quad\text{on }\bo{G},
\]
and hence $\{v_n\}$ is bounded in $H^1(\bo{G})$. The compact embedding $H^1(\bo{G})\hookrightarrow C(\bo{G})$ preserves the constraints in \eqref{S_infty Neumann}. The conclusion follows from the weak lower semicontinuity.
\end{proof}
We next characterize the limiting Sobolev constant in terms of the distance on the graph.
\begin{proposition}
\label{S_infty Neumann distance}
Assume that $\bo{G}$ is a compact metric graph without cycles. Then
\begin{equation}
\label{S_infty Neumann formula}
S_\infty^N(\bo{G}) = \frac{4} {\displaystyle \max_{x, y \in\bo{G}} dist(x, y)}.
\end{equation}
\end{proposition}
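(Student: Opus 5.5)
Let $D := \max_{x,y\in\bo{G}} dist(x,y)$, which is attained because $\bo{G}$ is compact. I will prove the two inequalities $S_\infty^N(\bo{G}) \geq 4/D$ and $S_\infty^N(\bo{G}) \leq 4/D$ separately. The lower bound works on any graph; the absence of cycles is used only in the construction for the upper bound.

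\emph{Lower bound.} Let $u \in H^1(\bo{G})$ be admissible for \eqref{S_infty Neumann}, and choose $a, b\in\bo{G}$ with $u(a) = -1$ and $u(b) = 1$. Since $\bo{G}$ is connected, there is a shortest path $\gamma$ from $a$ to $b$, of length $L = dist(a,b) \leq D$. Parametrize $\gamma$ by arclength on $[0,L]$. The restriction of $u$ to $\gamma$ is absolutely continuous, because it is continuous at vertices and $H^1$ on each edge piece. The fundamental theorem of calculus and the Cauchy--Schwarz inequality then give
\[
2 = u(b)-u(a) \leq \int_\gamma |\nabla u|\,ds \leq L^{1/2}\Big(\int_\gamma|\nabla u|^2 ds\Big)^{1/2}.
\]
Hence $\int_{\bo{G}}|\nabla u|^2 dx \geq \int_\gamma |\nabla u|^2 ds \geq 4/L \geq 4/D$. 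Taking the infimum over admissible $u$ gives $S_\infty^N(\bo{G})\geq 4/D$. One technical point is that a shortest path may start or end in the interior of an edge and traverse only part of it. This is harmless, because along $\gamma$ each edge segment is traversed at most once, so the integral over $\gamma$ is bounded by the integral over $\bo{G}$.

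\emph{Upper bound.} Pick $(a_*,b_*)\in\mathcal{D}$ and let $\bo{U}$ be the unique path joining them; it exists and is unique because $\bo{G}$ is a tree. On $\bo{U}$ set
\[
u(x) = -1 + \frac{2\,dist(a_*,x)}{D}.
\]
Every $x\in\bo{G}$ has a unique nearest point $\pi(x)\in\bo{U}$, which is where the path from $x$ first meets $\bo{U}$. Extend $u$ by $u(x) := u(\pi(x))$, so that $u$ is constant on each connected component of $\bo{G}\setminus\bo{U}$. This extension is continuous, since each component attaches to $\bo{U}$ at a single point; otherwise a cycle would be created. It is piecewise linear on edges, hence in $H^1(\bo{G})$. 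It also satisfies $\max u = 1$ and $\min u = -1$, and
\[
\int_{\bo{G}}|\nabla u|^2 dx = D\cdot\frac{4}{D^2} = \frac{4}{D}.
\]
Therefore $S_\infty^N(\bo{G})\leq 4/D$.

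The main obstacle is the bookkeeping in this construction. Along $\bo{U}$ the distance to $a_*$ must grow linearly with arclength, which holds because in a tree the unique path is geodesic. The components of $\bo{G}\setminus\bo{U}$ must each meet $\bo{U}$ at one point, which follows from acyclicity. Combining the two bounds gives \eqref{S_infty Neumann formula}.
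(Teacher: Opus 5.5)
Your proof is correct and follows the same approach as the paper. The lower bound uses the fundamental theorem of calculus and Cauchy--Schwarz along a path joining a minimum point and a maximum point. The upper bound uses the function that is linear on a diametral path $\bo{U}$ and constant on each component of $\bo{G}\setminus\bo{U}$.

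The only difference is that in the lower bound you take a shortest path rather than the unique tree path. This makes that inequality valid on any compact connected graph, so acyclicity is used only in the construction for the upper bound.
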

\begin{proof}
We first prove the lower bound. Let $u \in H^1(\bo{G})$ be admissible for \eqref{S_infty Neumann}. Choose $x_+, x_- \in \bo{G}$ such that
\[
u(x_+) = 1, \qquad u(x_-) = -1.
\]
Since $\bo{G}$ has no cycles, there exists a unique path joining $x_-$ and $x_+$. We denote this path by $\bo{U}$.  Applying the Cauchy--Schwarz inequality along this path, we obtain
\begin{align*}
4 = |u(x_+) - u(x_-)|^2 &\leq dist(x_-, x_+) \int_{\bo{U}} |\nabla u|^2 dx\\
&\leq dist(x_-, x_+) \int_{\bo{G}}|\nabla u|^2 dx.
\end{align*}
Since $dist(x_-, x_+) \leq \max\limits_{x, y \in \bo{G}}dist(x ,y)$, it follows that
\[
\int_{\bo{G}}|\nabla u|^2 dx \geq \frac{4} {\displaystyle\max_{x, y \in \bo{G}}dist(x,y)}.
\]
Taking the infimum over all admissible $u$, we obtain
\[
S_\infty^N(\bo{G}) \geq \frac{4} {\displaystyle\max_{x,y \in \bo{G}}dist(x, y)}.
\]

We next prove the upper bound. Let $a, b \in \bo{G}$ satisfy
\[
dist(a ,b) = \max_{x, y \in \bo{G}}dist(x ,y),
\]
and denote by $\bo{U}_{a,b}$ the unique path joining $a$ and $b$. We define $w \in H^1(\bo{G})$ by
\[
w(x) = -1 + \frac{2dist(a,x)}{dist(a,b)}, \qquad x \in \bo{U}_{a, b},
\]
and extend $w$ constantly to each component of $\bo{G}\setminus\bo{U}_{a, b}$ with the value of $w$ at its point of attachment to $\bo{U}_{a, b}$. Then
\[
\max_{\bo{G}} w = 1, \qquad \min_{\bo{G}}w = -1,
\]
and hence $w$ is admissible for \eqref{S_infty Neumann}. Moreover,
\[
\int_{\bo{G}}|\nabla w|^2 dx = \frac{4}{dist(a, b)} = \frac{4}{\displaystyle\max_{x, y \in \bo{G}}dist(x, y)}.
\]
Therefore,
\[
S_\infty^N(\bo{G}) \leq \frac{4}{\displaystyle\max_{x,y \in \bo{G}}dist(x, y)}.
\]
Combining the upper and lower bounds, we obtain \eqref{S_infty Neumann formula}.
\end{proof}
We next characterize the minimizers of \eqref{S_infty Neumann}.
\begin{proposition}
\label{characterization Neumann minimizer}
Assume that $\bo{G}$ is a compact metric graph without cycles. Let $v_\infty$ be a minimizer of $S_\infty^N(\bo{G})$, and let $x_+, x_- \in \bo{G}$ satisfy
\[
v_\infty(x_+) = 1, \qquad v_\infty(x_-) = -1.
\]
Then
\begin{equation}
\label{maximum distance Neumann}
dist(x_-, x_+) = \max_{x, y \in \bo{G}}dist(x ,y).
\end{equation}
Moreover, if $\bo{U}$ denotes the unique path joining $x_-$ and $x_+$, then
\[
v_\infty(x) = -1+ \frac{2dist(x_-, x)}{dist(x_-,x_+)}, \qquad x \in \bo{U},
\]
and $v_\infty$ is constant on each component of $\bo{G} \setminus \bo{U}$.
\end{proposition}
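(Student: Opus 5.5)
We argue by chaining the lower bound from Proposition~\ref{S_infty Neumann distance} with the minimality of $v_\infty$ and reading off the equality cases. Write $D := \max_{x,y\in\bo{G}} dist(x,y)$ and let $\bo{U}$ be the unique path from $x_-$ to $x_+$, which exists because $\bo{G}$ has no cycles.

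First, applying the fundamental theorem of calculus and the Cauchy--Schwarz inequality along $\bo{U}$, parametrized by arclength from $x_-$, gives
\begin{equation*}
4 = |v_\infty(x_+)-v_\infty(x_-)|^2 \le \Bigl(\int_{\bo{U}} |\nabla v_\infty|\,dx\Bigr)^2 \le dist(x_-,x_+)\int_{\bo{U}}|\nabla v_\infty|^2dx \le dist(x_-,x_+)\int_{\bo{G}}|\nabla v_\infty|^2dx .
\end{equation*}
Since $v_\infty$ is a minimizer, Proposition~\ref{S_infty Neumann distance} gives $\int_{\bo{G}}|\nabla v_\infty|^2dx = 4/D$. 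Hence $4 \le 4\,dist(x_-,x_+)/D$, so $dist(x_-,x_+)\ge D$. The reverse inequality is trivial, which proves \eqref{maximum distance Neumann}. Consequently every inequality in the chain is an equality.

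Next, we extract the profile from these equalities. Equality in the last step gives $\int_{\bo{G}\setminus\bo{U}}|\nabla v_\infty|^2dx=0$, so $\nabla v_\infty=0$ a.e. on $\bo{G}\setminus\bo{U}$. Each connected component of $\bo{G}\setminus\bo{U}$ is a connected union of edge pieces, and $v_\infty$ is continuous and $H^1$ on each edge, so $v_\infty$ is constant on each component. Equality in Cauchy--Schwarz along $\bo{U}$ forces $|\nabla v_\infty|$ to be a.e. constant on $\bo{U}$. Equality in $|\int \nabla v_\infty| \le \int|\nabla v_\infty|$ forces the derivative with respect to arclength from $x_-$ to have constant sign, here nonnegative. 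Therefore $v_\infty$ is affine in arclength along $\bo{U}$, rising from $-1$ to $1$ over length $dist(x_-,x_+)$. Since arclength along $\bo{U}$ from $x_-$ equals $dist(x_-,x)$ for $x \in \bo{U}$ (the path is unique in a tree), we obtain the stated formula.

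The main obstacle is making the Cauchy--Schwarz step along $\bo{U}$ rigorous when $\bo{U}$ crosses interior vertices or starts and ends inside edges. We handle this by splitting $\bo{U}$ into finitely many edge segments, using absolute continuity of $v_\infty$ on each segment and continuity at the vertices, and concatenating the segments into a single arclength interval $[0,dist(x_-,x_+)]$. On that interval $v_\infty$ is an absolutely continuous $H^1$ function, so the one-dimensional equality cases apply directly.
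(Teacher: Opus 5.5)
Your proposal is correct and follows the paper's own argument. Like the paper, you chain the Cauchy--Schwarz estimate along the unique path $\bo{U}$ with the identity $\int_{\bo{G}}|\nabla v_\infty|^2dx = 4/\max_{x,y\in\bo{G}} dist(x,y)$ from Proposition~\ref{S_infty Neumann distance}, and then read off the equality cases. Your additional steps only spell out details the paper states briefly: the intermediate inequality $\bigl|\int_{\bo{U}}\nabla v_\infty\bigr|\le\int_{\bo{U}}|\nabla v_\infty|$, and the concatenation of edge segments into a single arclength interval.
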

\begin{proof}
By Proposition~\ref{S_infty Neumann distance} and the proof of the lower bound therein, we have
\begin{align*}
4 &\leq dist(x_-, x_+) \int_{\bo{U}}|\nabla v_\infty|^2 dx\\
&\leq \max_{x, y \in \bo{G}}dist(x, y) \int_{\bo{G}}|\nabla v_\infty|^2 dx =4.
\end{align*}
Hence equality holds throughout. In particular, \eqref{maximum distance Neumann} holds and
\[
\int_{\bo{G} \setminus \bo{U}} |\nabla v_\infty|^2 dx=0.
\]
Therefore, $v_\infty$ is constant on each component of $\bo{G} \setminus \bo{U}$.

Moreover, equality in the Cauchy--Schwarz inequality implies that $\nabla v_\infty$ is constant on $\bo{U}$. Since
\[
v_\infty(x_-) = -1, \qquad v_\infty(x_+) = 1,
\]
we obtain
\[
v_\infty(x) = -1 + \frac{2dist(x_-, x)}{dist(x_-, x_+)}, \qquad x \in \bo{U}.
\]
\end{proof}
To study the asymptotic behavior of $S_p^N(\bo{G})$ as $p \to \infty$, we first establish a uniform upper bound.
\begin{lemma}
\label{uniform bound S_p^N}
There exists a constant $C>0$, independent of $p$, such that
\[
S_p^N(\bo{G}) \leq C
\]
for all sufficiently large $p$.
\end{lemma}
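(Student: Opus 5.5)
The plan is to bound $S_p^N(\bo{G})$ from above by testing it with a single $p$-independent function that lies in $\mathcal{A}_p$ for \emph{every} $p>1$. The constraint $\int_{\bo{G}}|u|^{p-1}u\,dx=0$ depends on $p$, so a generic test function would have to be adjusted for each $p$. We avoid this by building the test function to be odd under a measure-preserving reflection, which makes the constraint hold automatically.

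\textbf{Construction.} Fix an edge $\bo{e}$, identified with $[0,l]$, where $l=l(\bo{e})$. Define $\varphi$ by
\[
\varphi(t)=\cos(\pi t/l) \text{ for } t\in[0,l/4],\qquad \varphi(t)=\sin(2\pi t/l) \text{ for } t\in[l/4,3l/4],
\]
and by the analogous mirrored expression on $[3l/4,l]$. A simpler choice is a piecewise linear function on $[0,l]$ that vanishes near both endpoints, equals $1$ near $l/4$, equals $-1$ near $3l/4$, and is odd with respect to $t=l/2$ in the sense that $\varphi(l-t)=-\varphi(t)$. Since $\varphi$ vanishes near the endpoints, we extend it by $0$ to all other edges. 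The resulting function belongs to $H^1(\bo{G})$: it is continuous at the vertices because it is zero there.

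\textbf{Verifying the constraint.} The change of variables $t\mapsto l-t$ preserves Lebesgue measure on $[0,l]$. Because $\varphi(l-t)=-\varphi(t)$, it follows that $\int_{\bo{e}}|\varphi|^{p-1}\varphi\,dt=0$. The contribution from all other edges is zero since $\varphi$ vanishes there. Hence $\varphi\in\mathcal{A}_p$ for all $p>1$, and $\varphi\neq 0$.

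\textbf{The bound.} By definition of $S_p^N(\bo{G})$,
\[
S_p^N(\bo{G})\le \frac{\int_{\bo{G}}|\nabla\varphi|^2dx}{\left(\int_{\bo{G}}|\varphi|^{p+1}dx\right)^{\frac{2}{p+1}}}.
\]
The numerator is a fixed finite number. For the denominator, note that $|\varphi|=1$ on a set $E$ of positive measure $|E|$ (for the piecewise linear choice, the two plateaus). Therefore $\|\varphi\|_{L^{p+1}(\bo{G})}\ge |E|^{1/(p+1)}$. If $|E|\ge 1$ this is at least $1$. If $|E|<1$, it is at least $\min\{1,|E|^{1/2}\}$ for all $p\ge1$, since $1/(p+1)\le 1/2$. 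Alternatively, one can simply observe that $\|\varphi\|_{L^{p+1}}\to\|\varphi\|_{L^\infty}=1$. Either way, the denominator is bounded below by a positive constant independent of $p$, which gives $S_p^N(\bo{G})\le C$ for all large $p$.

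\textbf{Main obstacle.} The only delicate point is ensuring membership in $\mathcal{A}_p$ uniformly in $p$. The antisymmetric construction handles this directly, and nothing else in the argument is hard.
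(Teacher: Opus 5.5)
Your proof is correct and follows the paper's approach: the paper also takes a single $p$-independent function supported in one edge that is odd about an interior point (namely $(x-a)(1-|x-a|/L)$ on $[a-L,a+L]$), so that it lies in $\mathcal{A}_p$ for every $p$, and bounds the denominator from below using $\|\varphi\|_{L^{p+1}}\to\|\varphi\|_{L^\infty}>0$. Your plateau estimate even gives the bound for all $p>1$. You should, however, delete your first trigonometric formula: it is discontinuous at $t=l/4$ (since $\cos(\pi/4)\neq\sin(\pi/2)$) and does not vanish at $t=0$, so extending it by zero does not give an element of $H^1$. The piecewise-linear choice, or simply $\sin(2\pi t/l)$ on all of $[0,l]$, is what your argument actually uses.
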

\begin{proof}
Fix an edge $\bo{e} \in \bo{E}$, identified with $[0,\ell(\bo{e})]$, and choose $a \in (0, \ell (\bo{e}))$ and $L > 0$ such that $[a - L, a + L] \subset (0,\ell (\bo{e}))$. Define $\varphi \in H^1(\bo{G})$ by
\[
\varphi(x) =
\begin{cases}
(x - a) \(1 - \dfrac{|x-a|}{L}\),& x \in [a - L, a + L]\subset \bo{e},\\
0, & \text{otherwise}.
\end{cases}
\]
Since
\[
\varphi(a + x) = -\varphi(a - x) \qquad \text{for } x \in [- L, L],
\]
it follows that, for every $p>1$,
\[
\int_{\bo{G}}|\varphi|^{p-1}\varphi\,dx = 0.
\]
Hence $\varphi$ is admissible for the definition of $S_p^N(\bo{G})$. Therefore,
\[
S_p^N(\bo{G}) \leq \frac{\displaystyle\int_{\bo{G}}|\nabla \varphi|^2 dx}{\( \displaystyle\int_{\bo{G}}|\varphi|^{p+1} dx \)^{\frac{2}{p+1}}}.
\]
Since $\varphi$ is fixed and nontrivial,
\[
\( \int_{\bo{G}}|\varphi|^{p+1} dx \)^{\frac{1}{p+1}} \longrightarrow \|\varphi\|_{L^\infty(\bo{G})} > 0 \qquad \text{as } p \to \infty.
\]
Thus the right-hand side is uniformly bounded for all sufficiently large $p$. Hence there exists a constant $C>0$, independent of $p$, such that
\[
S_p^N(\bo{G}) \leq C.
\]
\end{proof}
Using the uniform bound obtained in Lemma~\ref{uniform bound S_p^N}, we now establish the convergence of the optimal Sobolev constants as $p \to \infty$.
\begin{proposition}
\label{S_p^N convergence}
As $p \to \infty$, one has
\[
S_p^N(\bo{G}) \longrightarrow S_\infty^N(\bo{G}).
\]
\end{proposition}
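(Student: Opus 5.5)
The plan is to mirror the proof of Proposition~\ref{S_p to S_infty}, with an upper bound from a test function and a lower bound from compactness. The one new difficulty is that the constraint $\int|u|^{p-1}u=0$ depends on $p$ and does not pass directly to the limiting constraint $\max u=1$, $\min u=-1$.

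\textbf{Upper bound.} Take a minimizer $w$ of $S_\infty^N(\bo{G})$, for instance the piecewise linear function from the proof of Proposition~\ref{S_infty Neumann distance}. In general $w\notin\mathcal{A}_p$, so we correct it by a constant. The map $c\mapsto\int_{\bo{G}}|w-c|^{p-1}(w-c)\,dx$ is continuous and strictly decreasing, and it changes sign on $[-1,1]$. Hence there is a unique $c_p\in(-1,1)$ with $w-c_p\in\mathcal{A}_p$. Using $w-c_p$ as a test function gives
\[
S_p^N(\bo{G})\le \frac{\int|\nabla w|^2\,dx}{\|w-c_p\|_{L^{p+1}(\bo{G})}^2}.
\]
We claim $c_p\to0$. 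Indeed, if $c_p\to c>0$ along a subsequence, then $w-c_p$ attains values near $-1-c$ on a set of positive measure, while $w-c_p\le 1-c_p$ everywhere. After dividing the constraint by $(1+c)^p$, the negative part dominates as $p\to\infty$, which is a contradiction; the case $c<0$ is symmetric. Consequently $\|w-c_p\|_{L^{p+1}}\to\|w\|_\infty=1$, and therefore $\limsup_{p\to\infty} S_p^N(\bo{G})\le S_\infty^N(\bo{G})$.

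\textbf{Lower bound.} Let $v_p$ be normalized minimizers, so that $\|v_p\|_{L^{p+1}}=1$, $v_p\in\mathcal{A}_p$, and $\int|\nabla v_p|^2=S_p^N(\bo{G})\le C$ by Lemma~\ref{uniform bound S_p^N}. An $H^1$ bound also requires control of the mean or of $\|v_p\|_\infty$. Since $v_p$ changes sign, it vanishes somewhere, and then $\|v_p\|_\infty\le |\bo{G}|^{1/2}\|\nabla v_p\|_{L^2}$ along a path. Thus $v_p$ is bounded in $H^1(\bo{G})$, and up to a subsequence $v_p\weakto v_\infty$ weakly in $H^1(\bo{G})$ and uniformly on $\bo{G}$. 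As in Proposition~\ref{S_p to S_infty}, we get $\|v_\infty\|_\infty=1$: the lower bound comes from $1\le|\bo{G}|^{1/(p+1)}\|v_p\|_\infty$, and a value exceeding $1$ would blow up the $L^{p+1}$ norm.

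\textbf{Main obstacle.} We must show $\max v_\infty=1$ and $-\min v_\infty=1$, and not merely that one of them equals $1$. Set $m_\pm=\max v_\infty^\pm$, so that $\max(m_+,m_-)=1$. Suppose $m_-<m_+=1$. Fix $\eta$ with $m_-<\eta<1$. For large $p$, uniform convergence gives $v_p^-\le\eta$ everywhere. It also gives $v_p\ge(1+\eta)/2$ on a fixed set of positive measure around a maximum point of $v_\infty$. Then
\[
\int (v_p^+)^p\,dx\ge c\Bigl(\tfrac{1+\eta}{2}\Bigr)^p \gg |\bo{G}|\,\eta^p\ge\int (v_p^-)^p\,dx,
\]
which contradicts $v_p\in\mathcal{A}_p$. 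The case $m_+<m_-$ is symmetric, so $m_+=m_-=1$. Hence $v_\infty$ is admissible for \eqref{S_infty Neumann}. By weak lower semicontinuity,
\[
S_\infty^N(\bo{G})\le\int|\nabla v_\infty|^2\,dx\le\liminf_{p\to\infty} S_p^N(\bo{G}).
\]
Since this holds for every subsequence, combining with the upper bound proves the proposition.
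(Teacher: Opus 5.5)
Your proof is correct and follows the paper's route. The upper bound uses the constant-shifted minimizer $w-c_p\in\mathcal{A}_p$ with $c_p\to0$. The lower bound uses compactness of the normalized minimizers, together with the constraint forcing $\max v_\infty=1=-\min v_\infty$. The differences are only in packaging:
\begin{itemize}
\item The paper gets the $L^2$ bound directly from H\"older and $\|v_p\|_{L^{p+1}}=1$, so your argument through a zero of $v_p$ is valid but unnecessary.
\item The paper handles both $c_p\to0$ and $m_+=m_-$ by passing to the limit in $\|f_p^+\|_{L^p}=\|f_p^-\|_{L^p}$, using $\|f_p^\pm\|_{L^p}\to\|f^\pm\|_{L^\infty}$ under uniform convergence. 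You instead use an explicit exponential-domination contradiction, which rests on the same mechanism.
\end{itemize}
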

\begin{proof}
We first prove the lower bound. Let $v_p$ be a minimizer of $S_p^N(\bo{G})$ normalized by $\|v_p\|_{L^{p+1}(\bo{G})} = 1$. By Lemma~\ref{uniform bound S_p^N},
\[
\int_{\bo{G}}|\nabla v_p|^2 dx = S_p^N(\bo{G}) \leq C.
\]
Moreover, since $\bo{G}$ has finite length and using H\"{o}lder's inequality,
\[
\|v_p\|_{L^2(\bo{G})} \leq |\bo{G}|^{\frac{1}{2} - \frac{1}{p+1}} \|v_p\|_{L^{p+1}(\bo{G})},
\]
and hence $\{v_p\}$ is bounded in $H^1(\bo{G})$. Thus, up to a subsequence,
\[
v_p \rightharpoonup v_\infty \quad \text{in } H^1(\bo{G}),
\]
and
\[
v_p\to v_\infty \quad\text{uniformly on }\bo{G}.
\]
As in the Dirichlet case, $\|v_\infty\|_{L^\infty(\bo{G})} = 1$. We now use the constraint condition for the Kirchhoff--Neumann problem. By \eqref{Neumann constraint}, we have
\[
\int_{\bo{G}}(v_p^+)^p dx = \int_{\bo{G}}(v_p^-)^p dx,
\]
and hence
\[
\|v_p^+\|_{L^p(\bo{G})} = \|v_p^-\|_{L^p(\bo{G})}.
\]
Since $v_p \to v_\infty$ uniformly on $\bo{G}$, we have
\[
\|v_p^\pm - v_\infty^\pm \|_{L^\infty(\bo{G})} \to 0.
\]
Therefore,
\begin{align*}
\left| \|v_p^\pm\|_{L^p(\bo{G})} - \|v_\infty^\pm \|_{L^p(\bo{G})} \right|
&\leq \|v_p^\pm - v_\infty^\pm \|_{L^p(\bo{G})}\\
&\leq |\bo{G}|^{\frac{1}{p}} \|v_p^\pm - v_\infty^\pm \|_{L^\infty(\bo{G})} \to 0.
\end{align*}
Moreover, $\|v_\infty^\pm \|_{L^p(\bo{G})} \longrightarrow \|v_\infty^\pm \|_{L^\infty(\bo{G})}$. Consequently,
\[
\|v_p^\pm \|_{L^p(\bo{G})} \longrightarrow \|v_\infty^\pm \|_{L^\infty(\bo{G})}.
\]
Passing to the limit in $\|v_p^+ \|_{L^p(\bo{G})} = \|v_p^- \|_{L^p(\bo{G})}$, we obtain
\[
\|v_\infty^+ \|_{L^\infty(\bo{G})} = \|v_\infty^- \|_{L^\infty(\bo{G})}.
\]
Since $\|v_\infty \|_{L^\infty(\bo{G})} = 1$, it follows that
\[
\max_{\bo{G}}v_\infty = 1, \qquad \min_{\bo{G}}v_\infty = -1.
\]
Thus $v_\infty$ is admissible for \eqref{S_infty Neumann}. By the weak lower semicontinuity,
\[
S_\infty^N(\bo{G}) \leq \int_{\bo{G}}|\nabla v_\infty|^2 dx \leq \liminf_{p\to\infty}S_p^N(\bo{G}).
\]

We next prove the upper bound. Let $v_\infty$ be a minimizer of $S_\infty^N(\bo{G})$. For each $ p > 1$, define $F_p:\mathbb{R}\to\mathbb{R}$ by
\[
F_p(a) := \int_{\bo{G}}|v_\infty - a|^{p-1}(v_\infty - a)\,dx, \qquad a \in \mathbb{R}.
\]
Since
\[
\max_{\bo{G}}v_\infty = 1, \qquad \min_{\bo{G}}v_\infty = -1,
\]
the function $F_p$ is continuous and strictly decreasing, and
\[
F_p(-1) > 0, \qquad F_p(1) < 0.
\]
Hence there exists a unique $a_p \in (-1,1)$ such that
\[
F_p(a_p)=0.
\]
Thus $v_\infty - a_p $ is admissible for the definition of $S_p^N(\bo{G})$.

We claim that $a_p \longrightarrow 0$. Indeed, let $\{a_{p_n}\}$ be any convergent subsequence and write $a_{p_n}\to a \in[-1,1]$. Since $F_{p_n}(a_{p_n})=0$, we have
\[
\|(v_\infty - a_{p_n})^+ \|_{L^{p_n}(\bo{G})} = \|(v_\infty - a_{p_n})^- \|_{L^{p_n}(\bo{G})}.
\]
Since $a_{p_n} \to a$, we have
\[
v_\infty - a_{p_n} \longrightarrow v_\infty - a \quad \text{uniformly on } \bo{G}.
\]
By the argument in the proof of the lower bound, we obtain
\[
\|(v_\infty - a)^+ \|_{L^\infty(\bo{G})} = \|(v_\infty - a)^-\|_{L^\infty(\bo{G})}.
\]
Since
\[
\max_{\bo{G}}v_\infty = 1, \qquad \min_{\bo{G}}v_\infty = -1,
\]
this gives
\[
1 - a = 1 + a,
\]
and hence $a=0$. Therefore,
\[
a_p \longrightarrow 0.
\]

Consequently,
\[
v_\infty - a_p \longrightarrow v_\infty \quad\text{uniformly on }\bo{G},
\]
and, as in the Dirichlet case,
\[
\|v_\infty - a_p \|_{L^{p+1}(\bo{G})} \longrightarrow \|v_\infty \|_{L^\infty(\bo{G})} = 1.
\]
Since $\nabla(v_\infty - a_p) = \nabla v_\infty$, we obtain
\begin{align*}
S_p^N(\bo{G}) &\leq \frac{\displaystyle\int_{\bo{G}}|\nabla v_\infty|^2 dx}{\|v_\infty - a_p\|_{L^{p+1}(\bo{G})}^2}.
\end{align*}
Taking the upper limit and using the fact that $v_\infty$ is a minimizer of $S_\infty^N(\bo{G})$, we obtain
\[
\limsup_{p \to \infty}S_p^N(\bo{G}) \leq \int_{\bo{G}}|\nabla v_\infty|^2 dx = S_\infty^N(\bo{G}).
\]
Combining the upper and lower bounds, we conclude that
\[
\lim_{p \to \infty}S_p^N(\bo{G}) = S_\infty^N(\bo{G}).
\]
\end{proof}
We next describe the asymptotic behavior of the normalized minimizers.
\begin{proposition}
\label{convergence Neumann minimizer}
Let $v_p$ be a minimizer of $S_p^N(\bo{G})$ normalized by $\|v_p\|_{L^{p+1}(\bo{G})} = 1$. Then, for every sequence $p_n \to \infty$, there exist a subsequence, still denoted by $p_n$, and a minimizer $v_\infty$ of $S_\infty^N(\bo{G})$ such that
\[
v_{p_n} \longrightarrow v_\infty \quad\text{in } H^1(\bo{G}),
\]
and
\[
v_{p_n}\to v_\infty \quad \text{uniformly on } \bo{G}.
\]
Moreover,
\[
\max_{\bo{G}}v_\infty = 1, \qquad \min_{\bo{G}}v_\infty = -1.
\]
\end{proposition}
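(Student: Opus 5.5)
The plan follows the proof of Proposition~\ref{normalized v_p}, reusing the compactness already obtained in the lower-bound part of Proposition~\ref{S_p^N convergence}. Fix a sequence $p_n\to\infty$ and let $v_{p_n}$ be normalized minimizers. By Lemma~\ref{uniform bound S_p^N} we have $\int_{\bo G}|\nabla v_{p_n}|^2dx = S_{p_n}^N(\bo G)\le C$. H\"older's inequality together with $\|v_{p_n}\|_{L^{p_n+1}(\bo G)}=1$ bounds $\|v_{p_n}\|_{L^2(\bo G)}$. Hence $\{v_{p_n}\}$ is bounded in $H^1(\bo G)$. By the compact embedding $H^1(\bo G)\hookrightarrow C(\bo G)$ on the compact graph, after passing to a subsequence we get
\[
v_{p_n}\weakto v_\infty \text{ weakly in } H^1(\bo G), \qquad v_{p_n}\to v_\infty \text{ uniformly on } \bo G .
\]

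Next I identify the constraint satisfied by $v_\infty$, exactly as in the lower-bound part of Proposition~\ref{S_p^N convergence}. First, $\|v_\infty\|_{L^\infty(\bo G)}=1$. The inequality $\ge 1$ comes from $1\le|\bo G|^{1/(p+1)}\|v_{p_n}\|_{L^\infty}$. The inequality $\le 1$ follows because a set of positive measure on which $|v_\infty|\ge 1+\delta$ would make $\|v_{p_n}\|_{L^{p_n+1}}$ blow up. Second, the identity $\|v_{p_n}^+\|_{L^{p_n}}=\|v_{p_n}^-\|_{L^{p_n}}$, which comes from \eqref{Neumann constraint}, passes to the limit and gives $\|v_\infty^+\|_{L^\infty}=\|v_\infty^-\|_{L^\infty}$. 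Combining the two, $\max_{\bo G}v_\infty=1$ and $\min_{\bo G}v_\infty=-1$, so $v_\infty$ is admissible for \eqref{S_infty Neumann}. Weak lower semicontinuity and Proposition~\ref{S_p^N convergence} then give
\[
S_\infty^N(\bo G)\le \int_{\bo G}|\nabla v_\infty|^2dx \le \liminf_{n\to\infty} S_{p_n}^N(\bo G)= S_\infty^N(\bo G).
\]
Therefore $v_\infty$ is a minimizer, and $\int_{\bo G}|\nabla v_{p_n}|^2dx\to\int_{\bo G}|\nabla v_\infty|^2dx$.

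It remains to upgrade to strong $H^1$ convergence. Uniform convergence on a graph of finite length gives $v_{p_n}\to v_\infty$ in $L^2(\bo G)$. So the full $H^1$ norms converge, because both the gradient parts and the $L^2$ parts converge. Weak convergence plus convergence of norms in the Hilbert space $H^1(\bo G)$ yields strong convergence.

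The only delicate point is the limit passage in the $L^{p}$ norms of $v_{p_n}^\pm$, where both the exponent and the function vary. This is handled by the same triangle-inequality estimate $|\bo G|^{1/p}\|v_{p_n}^\pm-v_\infty^\pm\|_{L^\infty}\to0$ used earlier in the proof of Proposition~\ref{S_p^N convergence}. Once that is in place, the rest is routine.
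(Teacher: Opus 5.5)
Your proposal is correct and follows essentially the same route as the paper: you get $H^1$-boundedness from Lemma~\ref{uniform bound S_p^N} and H\"older, then weak and uniform compactness, then identify $\max_{\bo G} v_\infty = 1$ and $\min_{\bo G} v_\infty = -1$ exactly as in the lower-bound part of Proposition~\ref{S_p^N convergence}, then use weak lower semicontinuity to get a minimizer, and finally weak convergence plus norm convergence for strong $H^1$ convergence. The only cosmetic difference is that you use convergence of the full $H^1$ norm, whereas the paper first shows $\nabla v_{p_n}\to\nabla v_\infty$ in $L^2$ and then combines this with uniform convergence; the two are equivalent.
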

\begin{proof}
By Lemma~\ref{uniform bound S_p^N}, the sequence $\{v_{p_n}\}$ is bounded in $H^1(\bo{G})$. Hence, up to a subsequence,
\[
v_{p_n} \weakto v_\infty \quad \text{weakly in }H^1(\bo{G}),
\]
and, by the compact embedding,
\[
v_{p_n}\to v_\infty \quad\text{uniformly on }\bo{G}.
\]
As in the proof of Proposition~\ref{S_p^N convergence}, we have
\[
\max_{\bo{G}}v_\infty = 1, \qquad \min_{\bo{G}}v_\infty = -1.
\]
Thus $v_\infty$ is admissible for \eqref{S_infty Neumann}. By the weak lower semicontinuity and Proposition~\ref{S_p^N convergence},
\begin{align*}
S_\infty^N(\bo{G}) &\leq \int_{\bo{G}}|\nabla v_\infty|^2 dx\\
&\leq \liminf_{n \to \infty} \int_{\bo{G}}|\nabla v_{p_n}|^2 dx\\
&= \lim_{n \to \infty}S_{p_n}^N(\bo{G}) = S_\infty^N(\bo{G}).
\end{align*}
Therefore,
\[
\int_{\bo{G}}|\nabla v_\infty|^2 dx = S_\infty^N(\bo{G}),
\]
and hence $v_\infty$ is a minimizer of $S_\infty^N(\bo{G})$. Moreover,
\[
\int_{\bo{G}}|\nabla v_{p_n}|^2 dx = S_{p_n}^N(\bo{G}) \longrightarrow S_\infty^N(\bo{G}) = \int_{\bo{G}}|\nabla v_\infty|^2 dx.
\]
Since
\[
\nabla v_{p_n}\weakto \nabla v_\infty \quad\text{weakly in }L^2(\bo{G}),
\]
we obtain
\[
\nabla v_{p_n}\longrightarrow \nabla v_\infty \quad\text{in }L^2(\bo{G}).
\]
Together with the uniform convergence, this yields
\[
v_{p_n}\longrightarrow v_\infty \quad\text{in }H^1(\bo{G}).
\]
\end{proof}

\section{Proofs of Theorem~\ref{main3} and Theorem~\ref{main4}} \label{sect;9}
We first prove Theorem~\ref{main3}.
\begin{proof}[Proof of Theorem~\ref{main3}]
Let $\{p_n\}$ be any sequence such that $p_n \to \infty$. By Proposition~\ref{sigma_p^N form}, the function
\[
v_p := \frac{u_p}{\|u_p\|_{L^{p+1}(\bo{G})}}
\]
is a minimizer of $S_p^N(\bo{G})$ satisfying
\[
\|v_p\|_{L^{p+1}(\bo{G})}=1,\qquad u_p = S_p^N(\bo{G})^{\frac{1}{p-1}}v_p.
\] 
By Proposition~\ref{convergence Neumann minimizer}, there exist a subsequence, still denoted by $\{p_n\}$, and a minimizer $v_\infty$ of $S_\infty^N(\bo{G})$ such that
\[
v_{p_n}\longrightarrow v_\infty
\]
strongly in $H^1(\bo{G})$ and uniformly on $\bo{G}$. By Proposition~\ref{S_p^N convergence},
\[
S_{p_n}^N(\bo{G})\longrightarrow S_\infty^N(\bo{G}) > 0.
\]
Hence
\[
S_{p_n}^N(\bo{G})^{\frac{1}{p_n-1}} \longrightarrow 1.
\]
Recalling \eqref{u_p Neumann}, we therefore obtain
\[
u_{p_n}\longrightarrow v_\infty
\]
strongly in $H^1(\bo{G})$ and uniformly on $\bo{G}$. By Proposition~\ref{characterization Neumann minimizer}, there exist
$a_*, b_* \in \bo{G}$ such that
\[
v_\infty(a_*) = -1,\qquad v_\infty(b_*) = 1,
\]
and
\[
(a_*,b_*) \in \mathcal{D}.
\]
Moreover, if $\bo{U}$ denotes the unique path joining $a_*$ and $b_*$, then
\[
v_\infty(x) = -1+\frac{2dist(a_*, x)}{dist(a_*, b_*)}, \qquad x\in\bo{U},
\]
and $v_\infty$ is constant on each connected component of $\bo{G}\setminus \bo{U}$. Thus the statement follows with $u_\infty = v_\infty$.
\end{proof}
We next prove Theorem~\ref{main4}.
\begin{proof}[Proof of Theorem~\ref{main4}]
Suppose, by contradiction, that the conclusion does not hold. Then there exist $\varepsilon > 0$ and a sequence $p_n \to \infty$ such that
\[
\left| dist(x_{p_n}^-, x_{p_n}^+) - \max_{x,y \in \bo{G}}dist(x,y) \right| \geq \varepsilon
\]
for every $n$. By Theorem~\ref{main3}, there exist a subsequence, still denoted by $\{p_n\}$, and a minimizer $u_\infty$ of $S_\infty^N(\bo{G})$ such that
\[
u_{p_n}\longrightarrow u_\infty \quad \text{uniformly on }\bo{G}.
\]
Since $\bo{G}$ is compact, up to a further subsequence,
\[
x_{p_n}^+ \longrightarrow x_+, \qquad x_{p_n}^- \longrightarrow x_-
\]
for some $x_+, x_- \in \bo{G}$. Since $x_{p_n}^+$ and $x_{p_n}^-$ are maximum and minimum points of $u_{p_n}$, respectively, the uniform convergence yields
\[
u_\infty(x_+) = 1, \qquad u_\infty(x_-) = -1.
\]
By Proposition~\ref{characterization Neumann minimizer}, we therefore have
\[
dist(x_-, x_+) = \max_{x, y \in \bo{G}}dist(x,y).
\]
Hence, by the continuity of the distance function,
\[
dist(x_{p_n}^-, x_{p_n}^+) \longrightarrow \max_{x, y \in \bo{G}}dist(x,y).
\]
Therefore,
\[
dist(x_{p_n}^-, x_{p_n}^+) \longrightarrow \max_{x,y \in \bo{G}}dist(x,y),
\]
which contradicts
\[
\left| dist(x_{p_n}^-, x_{p_n}^+) - \max_{x,y \in \bo{G}}dist(x,y) \right| \geq \varepsilon.
\]
Hence,
\[
dist(x_p^-, x_p^+) \longrightarrow \max_{x,y \in \bo{G}}dist(x,y) \qquad\text{as }p \to \infty.
\]
\end{proof}

\appendix
\section{Green function on a $Y$-shaped metric graph}\label{Appendix A}
Let $\bo{G}$ be the $Y$-shaped metric graph considered in the example above. In this appendix, we give the detailed computation of the Dirichlet--Kirchhoff Green function on $\bo{G}$. 

We identify each edge $\bo{e}_i$ with the interval $[0,l_i]$, where $0$ corresponds to the common interior vertex $\bo{v}_0$ and $l_i$ corresponds to the end vertex $\bo{v}_i$, for $i=1,2,3$. Let $a\in\bo{e}_1$, and let
\[
s=dist(a,\bo{v}_0)\in(0,l_1).
\]
We compute the Dirichlet--Kirchhoff Green function $G(\cdot,a)$. For simplicity, set
\[
g(x)=G(x,a), \qquad C_1=G(\bo{v}_0,a), \qquad C_2 = G(a,a).
\]
Since $-\Delta g = \delta_a$, the function $g$ is linear on each edge segment away from the pole $a$. By the continuity of $g$ and the Dirichlet conditions at the end vertices, the restriction of $g$ to $\bo{e}_1$ is given by
\[
g_1(t) =
\begin{cases}
C_1+\dfrac{C_2 - C_1}{s}t, & 0 \leq t \leq s,\\
C_2\dfrac{l_1-t}{l_1-s}, & s \leq t \leq l_1,
\end{cases}
\]
while its restrictions to $\bo{e}_2$ and $\bo{e}_3$ are given by
\[
g_i(t) = C_1\(1-\frac{t}{l_i}\),\qquad (i = 2,3).
\]

We first derive the Kirchhoff condition at $\bo{v}_0$. Since the coordinate on each edge increases away from $\bo{v}_0$, the outgoing derivatives at $\bo{v}_0$ are
\[
\frac{C_2 - C_1}{s}, \qquad -\frac{C_1}{l_2}, \qquad -\frac{C_1}{l_3}.
\]
Therefore, the Kirchhoff condition at $\bo{v}_0$ gives
\begin{equation}
\label{system1}
\frac{C_2 - C_1}{s} -\frac{C_1}{l_2} -\frac{C_1}{l_3} =0.
\end{equation}

Next, we derive the jump condition at the pole $a$. On $\bo{e}_1$, the Green function satisfies
\[
-\Delta_t G(t,a) = \delta_s.
\]
Integrating this equation over $(s - \varepsilon, s + \varepsilon)$, we obtain
\[
-\int_{s - \varepsilon}^{s + \varepsilon}\Delta_t G(t,a) dt =1.
\]
Hence,
\[
\nabla_t G(s-0,a) - \nabla_t G(s+0,a) = 1.
\]
From the explicit affine expressions above,
\[
\nabla_t G(s-0,a) = \frac{C_2 - C_1}{s}, \qquad \nabla_t G(s+0,a) = -\frac{C_2}{l_1 - s}.
\]
Thus, the jump condition becomes
\begin{equation}
\label{system2}
\frac{C_2 - C_1}{s} +\frac{C_2}{l_1-s} = 1.
\end{equation}

Solving the two equations \eqref{system1} and \eqref{system2}, we obtain
\[
G(a,a) = \frac{(l_1 - s) \left( s + \dfrac{l_2l_3}{l_2+l_3} \right)}{ l_1 + \dfrac{l_2l_3}{l_2 + l_3} }.
\]
The corresponding formulas on $\bo{e}_2$ and $\bo{e}_3$ follow by permuting the indices.

\section*{Acknowledgements}
The author would like to express his sincere gratitude to Professor Futoshi Takahashi for his valuable guidance and helpful discussions throughout this work. The author would also like to thank Tatsuya Hosono for valuable discussions and helpful comments.


\begin{thebibliography}{99}
\bibitem{A-G}
Adimurthi and M. Grossi;
{\it Asymptotic estimates for a two-dimensional problem with polynomial nonlinearity},
\newblock Proc. Amer. Math. Soc. 132 (2004), no. 4, 1013-1019.


\bibitem{Kurata-Shibata}
K. Kurata and M. Shibata;
{\it Least energy solutions to semi-linear elliptic problems on metric graphs},
\newblock J. Math. Anal. Appl. 491 (2020), 124297.


\bibitem{M-M-P}
F. De Marchis, L. Mazzuoli, F. Pacella;
{\it Stability and asymptotic behavior of one-dimensional solutions in cylinders},
\newblock J. Differential Equations 462 (2026), 114146.


\bibitem{P-W}
E. Parini and T. Weth;
{\it Existence, unique continuation and symmetry of least energy nodal solutions to sublinear Neumann problems},
\newblock Math. Z. 280 (2015), 707-732.


\bibitem{R-W1994}
X. Ren and J. Wei; 
{\it On a two dimensional elliptic problem with large exponent in nonlinearity},
\newblock Trans. Amer. Math. Soc. 343 (1994), 749–763.


\bibitem{R-W1996}
X. Ren and J. Wei;
{\it Single-point condensation and least-energy solutions},
\newblock Proc. Amer. Math. Soc. 124 (1996), 111–120.


\bibitem{S-T2018}
A. Salda\~{n}a and H. Tavares;
{\it Least energy solutions of Hamiltonian elliptic systems with Neumann boundary conditions},
\newblock J. Differential Equations 265 (2018), 6127-6165.


\bibitem{S-T2022}
A. Salda\~{n}a and H. Tavares;
{\it On the least-energy solutions of the pure Neumann Lane--Emden equation},
\newblock Nonlinear Differential Equations Appl. 29 (2022), Article No. 30.


\bibitem{Takahashi2014}
F. Takahashi;
{\it Asymptotic behavior of least energy solutions for a 2D nonlinear Neumann problem with large exponent},
\newblock J. Math. Anal. Appl. 411 (2014), 95-106.


\bibitem{Takahashi}
F. Takahashi;
{\it Notes on asymptotic behavior of radial solutions for some weighted elliptic equations on the annulus},
\newblock Partial Differ. Equ. Appl. 5 (2024), no. 4, Paper No. 25, 15 pp.

\end{thebibliography}
\end{document}